\newcommand{\omi}[1]{\buildrel { \buildrel{#1}\over{\vee} } \over .}
\newcommand\bbbone{{ \mathchoice {1\mskip-4mu\mathrm{l} } {1\mskip-4mu\mathrm{l} }{1\mskip-4.5mu\mathrm{l} } {1\mskip-5mu\mathrm{l}} }}
\newcommand\dd{\text{\textup{d}}} 
\newcommand\ds{\text{\textup{s}}} 
\newcommand\hd{\widehat{\dd}} 
\newcommand\hR{\widehat{R}}
\newcommand{\grast}{\bullet} 
\newcommand\cdotaction{\mathord{\cdot}} 
\newcommand\exter{{\textstyle\bigwedge}} 
\newcommand\ad{{\text{\textup{ad}}}} 
\newcommand\sign{{\text{\textup{sign}}}}
\newcommand\hor{{\text{\textup{Hor}}}} 
\newcommand\inv{{\text{\textup{Inv}}}} 
\newcommand\bas{{\text{\textup{Basic}}}} 
\newcommand\der{{\text{\textup{Der}}}} 
\newcommand\Int{{\text{\textup{Int}}}} 
\newcommand\ensvide{{\varnothing}} 
\newcommand\Ad{{\text{\textup{Ad}}}} 
\newcommand\lie{{\text{\textup{Lie}}}}
\newcommand\equ{{\text{\textup{equ}}}} 
\newcommand\Out{{\text{\textup{Out}}}}
\newcommand{\tla}{{\lienotation{TLA}}}
\newcommand{\raR}{{\widetilde{R}}}
\newcommand{\hnabla}{{\widehat{\nabla}}}
\newcommand{\hdNC}{\hd_{\text{NC}}}
\DeclareMathOperator{\Aut}{Aut} 
\DeclareMathOperator{\End}{End} 
\DeclareMathOperator{\tr}{Tr} 
\newcommand\varnotation[1]{{\mathcal{#1}}}
\newcommand\algnotation[1]{{\mathbf{#1}}}
\newcommand\lienotation[1]{{\mathbf{\mathsf{#1}}}}
\newcommand\grnotation[1]{{\mathsf{#1}}}
\newcommand\evnotation[1]{{\mathnormal{#1}}}
\newcommand\modnotation[1]{{\boldsymbol{#1}}}
\newcommand\varA{{\varnotation{A}}}
\newcommand\varE{{\varnotation{E}}}
\newcommand\varL{{\varnotation{L}}}
\newcommand\varM{{\varnotation{M}}}
\newcommand\varP{{\varnotation{P}}}
\newcommand\algA{{\algnotation{A}}}
\newcommand\algB{{\algnotation{B}}}
\newcommand\algzero{{\grnotation{0}}}
\newcommand\lieA{{\lienotation{A}}}
\newcommand\lieB{{\lienotation{B}}}
\newcommand\lieL{{\lienotation{L}}}
\newcommand\modM{{\modnotation{M}}}
\newcommand\evE{{\evnotation{E}}}
\newcommand\kg{{\mathfrak g}}
\newcommand\kD{{\mathfrak D}}
\newcommand\kS{{\mathfrak S}} 
\newcommand\kX{{\mathfrak X}}
\newcommand\kY{{\mathfrak Y}}
\newcommand\ksl{{\mathfrak{sl}}}
\newcommand\gN{{\mathbb N}}
\newcommand\gR{{\mathbb R}}
\newcommand\gC{{\mathbb C}}
\newcommand\sfX{{\mathsf X}}
\newcommand\sfY{{\mathsf Y}}
\newcommand\caL{{\mathcal L}}
\newcommand\caN{{\mathcal N}}
\newcommand\caZ{{\mathcal Z}}
\newtheorem{theorem}{Theorem}[section]
\newtheorem{proposition}[theorem]{Proposition}
\newtheorem{corollary}[theorem]{Corollary}
\theoremstyle{definition}
\newtheorem{example}[theorem]{Example}
\newtheorem{definition}[theorem]{Definition}
\theoremstyle{remark}
\numberwithin{equation}{section}
\begin{document}

\begin{frontmatter}
\title{Connections on Lie algebroids and\\ on derivation-based noncommutative geometry}

\author{Serge Lazzarini}
\ead{serge.lazzarini@cpt-univ-mrs.fr}

\author{Thierry Masson}
\ead{thierry.masson@cpt-univ-mrs.fr}

\address{Centre de Physique Théorique\tnoteref{umr}\\
Case postale 907, CNRS-Luminy\\
F--13288 Marseille Cedex 9, France}

\tnotetext[umr]{Unité Mixte de Recherche (UMR 6207) du CNRS et des Universités Aix-Marseille I, Aix-Marseille II et de l'Université du Sud Toulon-Var. Unité affiliée à la FRUMAM Fédération de Recherche 2291.}%

\begin{keyword}
Differential geometry, differential algebra, Lie algebroid, noncommutative geometry, connection.

\MSC[2010] Primary 57Rxx, 58Axx, 53C05; Secondary 46L87, 81T13
\end{keyword}



\begin{abstract}
In this paper we show how connections and their generalizations on transitive Lie algebroids are related to the notion of connections in the framework of the derivation-based noncommutative geometry. In order to compare the two constructions, we emphasize the algebraic approach of connections on Lie algebroids, using a suitable differential calculus. Two examples allow this comparison: on the one hand, the Atiyah Lie algebroid of a principal fiber bundle and, on the other hand, the space of derivations of the algebra of endomorphisms of a $SL(n, \gC)$-vector bundle. Gauge transformations are also considered in this comparison.
\end{abstract}

\end{frontmatter}

\newpage

\tableofcontents

\section{Introduction}

Connections on principal fiber bundles and their associated covariant derivatives on vector bundles admit several generalizations. In this paper, we shall deal with two contexts in which such generalizations have been proposed and well-studied.

In noncommutative geometry \cite{Conn94}, there is a natural notion of covariant derivative on modules which mimics the ordinary properties of a covariant derivative. Using differential calculi on noncommutative associative algebras, this notion can be expressed in terms of differential forms. Using these structures to build out gauge field theories, the noncommutative algebra is considered as a substitute for a principal fiber bundle together with its gauge group \cite{Connes1990qp,Chamseddine2006ep,Mass30,Mass32}.

Lie algebroids are now widely studied as a natural generalization of vector fields on a manifold (\cite{AlmeMoli85}, \cite{Kara86}, \cite{Wein87} and references in \cite{Mack05a} and \cite{CraiFern2006a}). Natural notions of covariant derivatives can be defined, where elements of a Lie algebroid are mapped into first order differential operators on a vector bundle. 

In this paper we show that these two different generalizations of the ordinary notion of connections turn out to be strongly related between themselves. We perform an explicit comparison which relies on Atiyah Lie algebroids to which a particular noncommutative situation is related. 

In Section~\ref{Relationtoordinarygeometryandnoncommutativegeometry} the comparison is achieved by using explicit descriptions of involved spaces of differential forms in terms of which connections are written. Gauge symmetries and their actions on connections are also compared. 

On the one hand, for Atiyah Lie algebroids, the space of forms is identified to the space of basic forms in a larger differential calculus which involves the space of de~Rham forms on the underlying principal fiber bundle. This permits to compare, in equation~\eqref{eq-basicconnectiononeform}, $1$-forms associated to connections on Atiyah Lie algebroids and to connections on the principal fiber bundle. Likely, $1$-forms associated to generalizations of connections on Atiyah Lie algebroids are described in equation~\eqref{eq-basicgeneralizedconnectiononeform} in terms of $1$-forms on the principal fiber bundle.

%

On the other hand, we consider the derivation-based noncommutative geometry of the algebra of endomorphisms of a $SL(n, \gC)$-vector bundle for which the space of derivations of this algebra is a transitive Lie algebroid \cite{Mass14}. We show in Proposition~\ref{prop-identificationLiealgebroidsDerivationsAtiyah} that it is isomorphic to the Atiyah Lie algebroid associated to the underlying $SL(n, \gC)$-principal fiber bundle and we show in Theorems~\ref{thm-derAconnectionsonEandnoncommutativeconnections} and \ref{thm-GeneralizedconnectionsandtracelessNCconnections} that the two notions of ``generalized'' connections in this situation are actually the same. 


The noncommutative geometry we are interested in uses the algebraic language of differential forms. A similar language can be used on transitive Lie algebroids where a convenient differential calculus can help to describe connections without making any reference to covariant derivatives on vector bundles. Inspired by examples in noncommutative geometry, we depart from the usual interpretation of Lie algebroids as generalization of vector fields to get closer to an interpretation of transitive Lie algebroids as generalizations of (an infinitesimal version of) principal fiber bundles, where the kernel of the transitive Lie algebroid represents the ``fiber structure''. This interpretation is consistent with the case of Atiyah Lie algebroids of principal bundles (see \cite{MR1129261} for instance).

We shall be concerned only with ``infinitesimal'' constructions in the sense that we only consider (transitive) Lie algebroids. Accordingly, we shall not deal with ``global'' constructions related to Lie groupoids, which would address, in particular, integrability questions \cite{CraiFern2006a}.

\section{General definitions and properties of Lie algebroids}
\label{generalodefinitionsandproperties}

\subsection{Transitive Lie algebroids}

In this section we fix notations about transitive Lie algebroids. We refer to the monograph \cite{Mack05a} for details. In order to get closer to noncommutative structures, we have written some of the structures presented here in an algebraic language.

\begin{definition}
\label{def-liealgebroidalgebraic}
Let $\varM$ be a smooth manifold. A Lie algebroid $\lieA$ is a finite projective module over $C^\infty(\varM)$ equipped with a Lie bracket $[-,-]$ and a $C^\infty(\varM)$-linear Lie morphism, the anchor, $\rho : \lieA \rightarrow \Gamma(T\varM)$ such that
\begin{equation*}
[\kX, f \kY] = f [\kX, \kY] + (\rho(\kX)\cdotaction f) \kY
\end{equation*}
for any $\kX, \kY \in \lieA$ and $f \in C^\infty(\varM)$ where $\Gamma(T\varM)$ denotes as usual the space of smooth vector fields on $\varM$.
\end{definition}

The notation $\lieA \overset{\rho}{\rightarrow} \Gamma(T\varM)$ will be used for a Lie algebroid $\lieA$ over the manifold $\varM$ with anchor $\rho$. Using the Serre-Swan theorem, the requirement that $\lieA$ be a finite projective module makes the definition equivalent to the usual one using fiber bundle theory.

Let $\lieA \overset{\rho_\lieA}{\rightarrow} \Gamma(T\varM)$ and $\lieB \overset{\rho_\lieB}{\rightarrow} \Gamma(T\varM)$ be two Lie algebroids over the same base manifold $\varM$. A morphism of Lie algebroids is a morphism of Lie algebras and $C^\infty(\varM)$-modules $\varphi: \lieA \rightarrow \lieB$ compatible with the anchors : $\rho_\lieB \circ \varphi = \rho_\lieA$. 

\begin{definition}
A Lie algebroid $\lieA \overset{\rho}{\rightarrow} \Gamma(T\varM)$ is transitive if $\rho$ is surjective.
\end{definition}

The kernel $\lieL = \ker \rho$ of a transitive Lie algebroid is a Lie algebroid with null anchor on $\varM$. It is well-known that there exists a locally trivial vector bundle in Lie algebras $\varL$ such that $\lieL = \Gamma(\varL)$. The Lie bracket on $\lieL$ is inherited from the Lie bracket of the Lie algebra on which the fiber bundle $\varL$ is modeled.

One can summarize the structure of a transitive Lie algebroid in the short exact sequence of Lie algebras and $C^\infty(\varM)$-modules
\begin{equation*}
\xymatrix@1{{\algzero} \ar[r] & {\lieL} \ar[r]^-{\iota} & {\lieA} \ar[r]^-{\rho} & {\Gamma(T\varM)} \ar[r] & {\algzero}}.
\end{equation*}

\subsection{Representation}
\label{subsec-representation}

Let $\varE$ be a vector bundle over the manifold $\varM$. Denote by $\algA(\varE)$ the associative algebra of smooth sections of the fiber bundle of endomorphisms $\End(\varE)$ of $\varE$ and denote by $\kD(\varE)$ the space of first-order differential operators on $\varE$ with scalar symbols. Let $\sigma : \kD(\varE) \rightarrow \Gamma(T\varM)$ be the symbol map. Then 
\begin{equation*}
\xymatrix@1{{\algzero} \ar[r] & {\algA(\varE)} \ar[r]^-{\iota} & {\kD(\varE)} \ar[r]^-{\sigma} & {\Gamma(T\varM)} \ar[r] & {\algzero}}
\end{equation*}
is the transitive Lie algebroid of derivations of $\varE$ \cite{MR585879,MR1958838}.

A representation of a transitive Lie algebroid $\lieA \xrightarrow{\rho} \Gamma(T\varM)$ on a vector bundle $\varE \rightarrow \varM$ is a morphism of Lie algebroids $\phi : \lieA \rightarrow \kD(\varE)$ \cite{Mack05a}.

In that case one has the commutative diagram of exact rows
\begin{equation}
\label{eq-diagramrepresentation}
\xymatrix
{
{\algzero} \ar[r] & {\lieL} \ar[r]^-{\iota}\ar[d]^-{\phi_\lieL}  & {\lieA} \ar[r]^-{\rho} \ar[d]^-{\phi} & {\Gamma(T\varM)} \ar[r] \ar@{=}[d]  & {\algzero}
\\
{\algzero} \ar[r] & {\algA(\varE)} \ar[r]^-{\iota} & {\kD(\varE)} \ar[r]^-{\sigma} & {\Gamma(T\varM)} \ar[r] & {\algzero}
}
\end{equation}
where $\phi_\lieL : \lieL \rightarrow \algA(\varE)$ is a $C^\infty(\varM)$-linear morphism of Lie algebras.

\section{Differential calculi and connections}
\label{ConnectionsontransitiveLiealgebroids}

\subsection{Differential structures}
\label{differentialstructures}

Given a representation, one can introduce a differential calculus in the following generic way \cite[Definition~7.1.1]{Mack05a}.

\begin{definition}
For any $p \in \gN$, let $\Omega^p(\lieA, \varE)$ be the linear space of $C^\infty(\varM)$-multilinear antisymmetric maps from $\lieA^p$ to $\Gamma(\varE)$ (smooth sections). For $p=0$ one defines $\Omega^0(\lieA, \varE) = \Gamma(\varE)$. The graded space $\Omega^\grast(\lieA, \varE) = \bigoplus_{p \geq 0} \Omega^p(\lieA, \varE)$ is equipped with the natural differential $\hd_\phi : \Omega^p(\lieA, \varE) \rightarrow \Omega^{p+1}(\lieA, \varE)$ defined on any $\omega \in \Omega^p(\lieA, \varE)$ by the Koszul formula
\begin{multline*}
(\hd_\phi \omega)(\kX_1, \dots, \kX_{p+1}) = \sum_{i=1}^{p+1} (-1)^{i+1} \phi(\kX_i)\cdotaction\omega(\kX_1, \dots \omi{i} \dots, \kX_{p+1})\\
+ \sum_{1 \leq i < j \leq p+1} (-1)^{i+j} \omega([\kX_i, \kX_j], \kX_1, \dots \omi{i} \dots \omi{j} \dots, \kX_{p+1})
\end{multline*}
where $\omi{i}$ means omission of the $i$th argument.
\end{definition}

In this definition, $\phi(\kX)\cdotaction s$ is the action of the first order differential operator $\phi(\kX)$ on a section $s \in \Gamma(\varE)$. Because $\phi$ is a morphism of Lie algebras, one has $\hd_\phi^2 = 0$.

In the following, we are interested in three particular differential calculi constructed for three specific representations of $\lieA$.

Let $\varE = \varM \times \gC$. Then $\kD(\varE) = \Gamma(T\varM)$ and the natural representation we consider is the anchor map of $\lieA$.

\begin{definition}
\label{def-formsvaluesfunctions}
$(\Omega^\grast(\lieA), \hd_\lieA)$ is the graded commutative differential algebra of forms on $\lieA$ with values in $C^\infty(\varM) = \Gamma(\varM \times \gC)$ associated to the previous representation.
\end{definition}

This space of forms is the one usually considered in the theory of Lie algebroids \cite{ArAbCrai09,Crai03a,CraiFern2006a,CraiFern2009a,Fern03a,Mack05a}. It has been also used to define cohomologies on Lie algebroids. We will not enter into this subject here, and we refer to the aforementioned references for details.

Let $\varE = \varL$ be the vector bundle for which $\lieL = \Gamma(\varL)$. There is a natural representation of $\lieA$ on $\varL$, called the adjoint representation. It is defined as follows. For any $\kX \in \lieA$ and any $\ell \in \lieL$, the Lie bracket $\ad_\kX(\ell) = [\kX, \ell]$ is defined to be the unique element in $\lieL$ such that $\iota([\kX, \ell]) = [\kX, \iota(\ell)]$, see \cite{Mack05a} for instance. 

\begin{definition}
\label{def-formsvalueskernel}
$(\Omega^\grast(\lieA, \lieL), \hd)$ is the graded differential Lie algebra of forms on $\lieA$ with values in the kernel $\lieL$ associated to the adjoint representation.
\end{definition}

Notice that this differential space is naturally a graded differential module on the graded commutative differential algebra $\Omega^\grast(\lieA)$.

Let $\varE$ be a vector bundle which supports a representation $\phi$ of $\lieA$. Then there is a natural induced representation $\widetilde{\phi}$ of $\lieA$ on the vector bundle $\End(\varE)$ defined by $\widetilde{\phi}(\kX) \cdotaction a = [\phi(\kX), a]$ for any $\kX \in \lieA$ and any $a \in \algA(\varE) = \Gamma(\End(\varE))$.

\begin{definition}
\label{def-formsvalueskernelrepresentation}
$(\Omega^\grast(\lieA, \algA(\varE)), \hd_\varE)$ is a graded differential algebra of forms on $\lieA$ with values in the associative algebra $\algA(\varE)$ for the previously defined representation.
\end{definition}

Notice that $(\Omega^\grast(\lieA, \varE), \hd_\phi)$ is a graded differential module on this graded differential algebra.

\subsection{Cartan operations}

Let $(\Omega^\grast(\lieA, \varE), \hd_\phi)$ be the graded differential space associated to a representation of $\varE$. For any $\kX \in \lieA$ and $p \geq 1$, the inner operation $i_\kX : \Omega^p(\lieA,\varE) \rightarrow \Omega^{p-1}(\lieA,\varE)$ is well defined. By convention, $i_\kX$ is zero on $\Omega^0(\lieA,\varE)$.

We associated to the inner operation its Lie derivative $L_\kX = \hd_\phi i_\kX + i_\kX \hd_\phi : \Omega^p(\lieA,\varE) \rightarrow \Omega^{p}(\lieA,\varE)$. Then, for any $\kX, \kY \in \lieA$ and $f \in C^\infty(\varM)$, one has
\begin{align}
\label{eqs-cartanoperation}
i_{f \kX} &= f i_{\kX},
&
i_\kX i_\kY + i_\kY i_\kX &= 0,
&
[L_\kX, i_\kY] &= i_{[\kX, \kY]},
&
[L_\kX, L_\kY] &= L_{[\kX, \kY]}.
\end{align}

When the differential complex is an algebra ($\Omega^\grast(\lieA)$ or $\Omega^\grast(\lieA, \algA(\varE))$ for instance), $i_\kX$ is a graded derivation of degree $-1$ and $L_\kX$ is a graded derivation of degree $0$.

These structures lead naturally to the following notion (see \cite{MR1739368} for instance).

\begin{definition}
\label{def-cartanoperationLiealgebroids}
Let $\lieB$ be a Lie algebroid over $\varM$. A Cartan operation of $\lieB$ on $(\Omega^\grast(\lieA, \varE), \hd_\phi)$ is given by the following data: for any $\kX \in \lieB$, for any $p \geq 1$, there is a map $i_\kX : \Omega^p(\lieA, \varE) \rightarrow \Omega^{p-1}(\lieA, \varE)$ such that the relations in \eqref{eqs-cartanoperation} hold for any $\kX, \kY \in \lieB$, $f \in C^\infty(\varM)$ where $L_\kX = \hd_\phi i_\kX + i_\kX \hd_\phi$.

We will denote by $(\lieB, i, L)$ such a Cartan operation on $(\Omega^\grast(\lieA, \varE), \hd_\phi)$.
\end{definition}

For instance, the kernel $\lieL$ of $\lieA$ defines a natural Cartan operation when the map $i$ is restricted to elements in $\lieL \subset \lieA$.

Given such a Cartan operation $(\lieB, i, L)$, one can define horizontal, invariant and basic elements in $\Omega^\grast(\lieA, \varE)$: $\Omega^\grast(\lieA, \varE)_\hor$ will denote the the graded subspace of horizontal elements, $\Omega^\grast(\lieA, \varE)_\inv$ the graded subspace of invariant elements and $\Omega^\grast(\lieA, \varE)_\bas = \Omega^\grast(\lieA, \varE)_\hor \cap \Omega^\grast(\lieA, \varE)_\inv$ the graded subspace of basic elements.

%
%

This notion of Cartan operation will be relevant in order to characterize differential forms on Atiyah Lie algebroids in sub-section~\ref{ex-atiyahliealgebroid}.

\subsection{Gauge transformations}

A locally trivial vector bundle $\varE$ gives rise to the natural group of vertical automorphisms, which plays an important role in the theory of connections.

\begin{definition}
\label{def-gaugegroupofarepresentation}
The gauge group of $\varE$ is the group of vertical automorphisms of the vector bundle $\varE$. It is denoted by $\Aut(\varE)$. This is the group of invertible elements in $\algA(\varE)$.
\end{definition}

If further structures are defined on $\varE$ then it is possible to restrict the gauge group to vertical automorphisms compatible with these structures. For instance, an hermitean structure on $\varE$ would give rise to the group of unitary elements in $\algA(\varE)$ for the natural involution on this algebra induced by the hermitean structure. We will not use such an extra structure here, but many of the definitions and properties given below where gauge elements appear can be adapted to such a restriction.

Gauge transformations have been used to study the moduli space of irreducible Lie algebroids connections in \cite{Kriz08a}. The infinitesimal version of the gauge group of $\varE$ is the Lie algebroid $\algA(\varE)$. The diagram \eqref{eq-diagramrepresentation} then suggests to consider infinitesimal version of gauge transformations on a transitive Lie algebroid $\lieA$ in the following way:

\begin{definition}
\label{def-infinitesimalgaugetransformations}
An infinitesimal gauge transformation on the transitive Lie algebroid $\lieA$ is an element $\xi \in \lieL$. Such an infinitesimal gauge transformation acts vertically on $\varE$ through the representation $\phi_\lieL : \lieL \rightarrow \algA(\varE)$.
\end{definition}

This definition, applied to the Atiyah Lie algebroid gives the usual infinitesimal gauge transformations of the principal fiber bundle (see~\ref{ex-atiyahliealgebroid}).

If $\lieL$ is equipped with supplementary structures, for instance a metric, infinitesimal gauge transformations can be required to be compatible in a suitable way with these structures. In that case, the space of infinitesimal gauge transformations can be strictly included in $\lieL$.

\subsection{Connections}

The ordinary notion of connection on transitive Lie algebroid is the following.

\begin{definition}[\cite{Mack05a}]
Let $\lieA \xrightarrow{\rho} \Gamma(T\varM)$ be a transitive Lie algebroid. A connection on $\lieA$ is a splitting $\nabla : \Gamma(T\varM) \rightarrow \lieA$ as $C^\infty(\varM)$-modules of the short exact sequence
\begin{equation}
\label{eq-secconnectionontransitivealgebroid}
\xymatrix@1@C=25pt{{\algzero} \ar[r] & {\lieL} \ar[r]_-{\iota} & {\lieA} \ar[r]_-{\rho} & {\Gamma(T\varM)} \ar[r] \ar@/_0.7pc/[l]_-{\nabla} & {\algzero}}
\end{equation}

The curvature of $\nabla$ is defined to be the obstruction of being a morphism of Lie algebras:
\begin{equation*}
R(X,Y) = [\nabla_X, \nabla_Y] - \nabla_{[X,Y]}
\end{equation*}
\end{definition}

For any connection $\nabla$ there is an algebraic object which allows concrete computations. It is defined as follow. For any $\kX \in \lieA$, let $X = \rho(\kX)$. Then $\kX - \nabla_X \in \ker \rho$, so that there is an element $\alpha(\kX) \in \lieL$ such that
\begin{equation}
\label{eq-decompositionwithconnection}
\kX = \nabla_X - \iota \circ \alpha(\kX)
\end{equation}
The map $\alpha : \lieA \rightarrow \lieL$ is a morphism of $C^\infty(\varM)$-modules.

\begin{proposition}
\label{connectiononeformanscurvatureinalgebraicterms}
Let $\nabla$ be a connection on the transitive Lie algebroid $\lieA$. The associated morphism $\alpha : \lieA \rightarrow \lieL$ is an element of $\Omega^1(\lieA, \lieL)$ normalized on $\iota \circ \lieL$ by the relation
\begin{equation}
\label{eq-normalizationoneformconnection}
\alpha \circ \iota(\ell) = -\ell
\end{equation}
for any $\ell \in \lieL$. It is called the connection $1$-form of the connection $\nabla$.
Any $1$-form $\alpha \in \Omega^1(\lieA, \lieL)$ normalized as in \eqref{eq-normalizationoneformconnection} defines a connection on $\lieA$.

The $2$-form 
\begin{equation}
\label{eq-curvaturetwoformglobal}
\hR = \hd \alpha + \frac{1}{2} [\alpha, \alpha]
\end{equation}
is horizontal for the Cartan operation of $\lieL$ on $(\Omega^\grast(\lieA, \lieL), \hd)$. With obvious notations, one has $\iota\circ \hR(\kX, \kY) = \iota \left( (\hd \alpha)(\kX, \kY) + [\alpha(\kX), \alpha(\kY)]\right) = R(X,Y)$.
$\hR \in \Omega^2(\lieA, \lieL)$ is called the curvature $2$-form of $\nabla$. It satisfies the Bianchi identity
\begin{equation*}
\hd \hR + [\alpha, \hR] = 0
\end{equation*}
\end{proposition}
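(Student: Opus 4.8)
The plan is to verify each of the four assertions in turn, working through the defining Koszul formula for $\hd$ and the decomposition \eqref{eq-decompositionwithconnection}. First I would establish that $\alpha \in \Omega^1(\lieA, \lieL)$ is indeed $C^\infty(\varM)$-linear and satisfies the normalization \eqref{eq-normalizationoneformconnection}: linearity follows because $\nabla$ is a splitting of $C^\infty(\varM)$-modules, so for $\kX \in \lieA$ and $f \in C^\infty(\varM)$, writing $X = \rho(\kX)$ one has $\rho(f\kX) = fX$ and hence $f\kX - \nabla_{fX} = f(\kX - \nabla_X)$, giving $\alpha(f\kX) = f\alpha(\kX)$; the normalization is immediate since $\rho \circ \iota = 0$ forces $\iota(\ell) = \nabla_0 - \iota(\alpha(\iota(\ell))) = -\iota(\alpha(\iota(\ell)))$, and $\iota$ is injective. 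Conversely, given a normalized $\alpha \in \Omega^1(\lieA, \lieL)$, I would construct $\nabla$ directly: the map $\kX \mapsto \kX + \iota\circ\alpha(\kX)$ vanishes on $\iota(\lieL)$ by \eqref{eq-normalizationoneformconnection}, so it factors through $\rho$ to define a $C^\infty(\varM)$-linear map $\Gamma(T\varM) \to \lieA$, which one checks is a section of $\rho$.

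Next I would prove horizontality of $\hR = \hd\alpha + \tfrac12[\alpha,\alpha]$, i.e.\ that $i_\ell \hR = 0$ for all $\ell \in \lieL$. Using $i_\ell \hd = L_\ell - \hd i_\ell$ and $i_\ell \alpha = \alpha(\iota(\ell)) = -\ell$ by normalization, together with the adjoint-representation identity $L_\ell \alpha = \hd(\alpha(\iota(\ell))) \,$-type manipulation — more precisely unwinding $(i_\ell \hd\alpha)(\kX) = (L_\ell\alpha)(\kX) + (\hd\ell)(\kX)$ and $(i_\ell[\alpha,\alpha])(\kX) = 2[\alpha(\iota(\ell)),\alpha(\kX)] = -2[\ell,\alpha(\kX)]$ — the two contributions should cancel after using that $L_\ell$ acts via the bracket with $\iota(\ell)$ in the adjoint representation. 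This is the computational heart of the proof and the step I expect to require the most care: one must keep track of the precise sign and normalization conventions in the Koszul formula and in the Cartan relations \eqref{eqs-cartanoperation}, and use that $\hd$ on $\Omega^\grast(\lieA,\lieL)$ is built from the adjoint representation $\ad_\kX(\ell) = [\kX,\ell]$.

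Then I would check the identity $\iota\circ\hR(\kX,\kY) = R(X,Y)$. Expanding $(\hd\alpha)(\kX,\kY) = \ad_\kX(\alpha(\kY)) - \ad_\kY(\alpha(\kX)) - \alpha([\kX,\kY])$ and substituting $\kX = \nabla_X - \iota\circ\alpha(\kX)$ (and similarly for $\kY$), then applying $\iota$ and using that $\iota$ intertwines $\ad$ with the honest bracket, the terms reorganize — after adding $\iota[\alpha(\kX),\alpha(\kY)]$ — into $[\nabla_X,\nabla_Y] - \nabla_{[X,Y]}$; here one uses $\rho([\kX,\kY]) = [X,Y]$ so that the "$\nabla$-part" of $[\kX,\kY]$ is exactly $\nabla_{[X,Y]}$. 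Finally, the Bianchi identity $\hd\hR + [\alpha,\hR] = 0$ follows formally: apply $\hd$ to $\hR = \hd\alpha + \tfrac12[\alpha,\alpha]$, use $\hd^2 = 0$ and the graded Leibniz rule $\hd[\alpha,\alpha] = 2[\hd\alpha,\alpha]$ on the graded differential Lie algebra $(\Omega^\grast(\lieA,\lieL),\hd)$, and then rewrite $[\hd\alpha,\alpha] = [\hR - \tfrac12[\alpha,\alpha],\alpha] = [\hR,\alpha]$, where $[[\alpha,\alpha],\alpha] = 0$ by the graded Jacobi identity for a $1$-form. This last part is purely formal once the graded differential Lie algebra structure from Definition~\ref{def-formsvalueskernel} is invoked.
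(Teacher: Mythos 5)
Your proposal is correct and follows exactly the route the paper intends: the paper's proof is simply ``these assertions can be checked directly,'' and your outline is that direct verification (normalization and $C^\infty(\varM)$-linearity from the decomposition \eqref{eq-decompositionwithconnection}, horizontality via $i_{\iota(\ell)}\alpha=-\ell$ and the Cartan relations, the identity $\iota\circ\hR(\kX,\kY)=R(X,Y)$ by substituting $\iota\circ\alpha(\kX)=\nabla_X-\kX$, and Bianchi from $\hd^2=0$, the graded Leibniz rule and the graded Jacobi identity $[[\alpha,\alpha],\alpha]=0$). The sign conventions you flag do work out, e.g. $L_{\iota(\ell)}\alpha=[\ell,\alpha]-\hd\ell$ makes $i_{\iota(\ell)}\hR=0$ as you predict, so no gap remains.
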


\begin{proof}
These assertions can be checked directly.
\end{proof}

\begin{proposition}
\label{prop-covariantderivativelieAlieL}
Let $\alpha$ be the connection $1$-form of a connection $\nabla$ on $\lieA$. Then the map
\begin{align*}
D : \Omega^\grast(\lieA, \lieL) & \rightarrow \Omega^{\grast+1}(\lieA, \lieL)
&
D\eta &= \hd \eta + [\alpha, \eta]
\end{align*}
called the covariant differential of $\alpha$ on $\Omega^\grast(\lieA, \lieL)$, satisfies
\begin{equation*}
D^2 \eta = [\hR, \eta]
\end{equation*}
for any $\eta \in \Omega^\grast(\lieA, \lieL)$.
\end{proposition}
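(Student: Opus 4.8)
The plan is to reduce the statement to two facts proved earlier: that $\hd$ squares to zero on $\Omega^\grast(\lieA,\lieL)$ (Definition~\ref{def-formsvalueskernel}), and that the graded bracket on $\Omega^\grast(\lieA,\lieL)$ induced by the Lie bracket on $\lieL$ together with $\hd$ makes it a graded differential Lie algebra, so that $\hd$ is a graded derivation of the bracket and the graded Jacobi identity holds. Granting these, the computation of $D^2$ is a direct expansion.

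First I would simply compute, for $\eta \in \Omega^p(\lieA,\lieL)$,
\begin{equation*}
D^2\eta = D(\hd\eta + [\alpha,\eta]) = \hd\hd\eta + \hd[\alpha,\eta] + [\alpha,\hd\eta] + [\alpha,[\alpha,\eta]].
\end{equation*}
The first term vanishes since $\hd^2=0$. Since $\hd$ is a graded derivation of the bracket and $\alpha$ has degree $1$, $\hd[\alpha,\eta] = [\hd\alpha,\eta] - [\alpha,\hd\eta]$, so the two middle terms combine to $[\hd\alpha,\eta]$. Hence $D^2\eta = [\hd\alpha,\eta] + [\alpha,[\alpha,\eta]]$.

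Next I would handle the term $[\alpha,[\alpha,\eta]]$ using the graded Jacobi identity. For the degree-$1$ element $\alpha$ this gives $[\alpha,[\alpha,\eta]] = \tfrac12\big[[\alpha,\alpha],\eta\big]$; the sign bookkeeping is the only slightly delicate point, but it follows from the standard fact that for an odd element $\alpha$ one has $[\alpha,[\alpha,-]] = \tfrac12[[\alpha,\alpha],-]$ as operators, exactly as for the BRST-type computations. Combining, $D^2\eta = \big[\hd\alpha + \tfrac12[\alpha,\alpha],\,\eta\big] = [\hR,\eta]$ by the definition \eqref{eq-curvaturetwoformglobal} of the curvature $2$-form.

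I expect the main (and essentially only) obstacle to be getting the signs right in the graded Leibniz rule for $\hd$ on the bracket and in the graded Jacobi identity, since the bracket on $\Omega^\grast(\lieA,\lieL)$ combines antisymmetrization of form arguments with the Lie bracket on values. One clean way to bypass delicate sign chasing is to verify the identity on homogeneous elements of the form $\beta \otimes \ell$ with $\beta \in \Omega^\grast(\lieA)$ and $\ell \in \lieL$ (or rather on sums of such, using that these span), where $D(\beta\otimes\ell)$ decomposes via the Leibniz rule into scalar-form differentials and the adjoint action, reducing everything to the already-established identities $\hd_\lieA^2=0$ and the Bianchi/structure identities of Proposition~\ref{connectiononeformanscurvatureinalgebraicterms}. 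Either route yields the result in a few lines.
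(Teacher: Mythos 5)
Your argument is correct and is precisely the computation the paper has in mind: its proof simply says ``a straightforward calculation using standard relations on graded differential Lie algebras,'' and your expansion of $D^2$ using $\hd^2=0$, the graded Leibniz rule $\hd[\alpha,\eta]=[\hd\alpha,\eta]-[\alpha,\hd\eta]$, and the graded Jacobi identity $[\alpha,[\alpha,\eta]]=\tfrac12[[\alpha,\alpha],\eta]$ for the odd element $\alpha$ is exactly that calculation made explicit.
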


\begin{proof}
This is just a straightforward calculation using standard relations on graded differential Lie algebras.
\end{proof}

The connection $1$-form $\alpha$ is called connection reform in \cite{Mack05a}, but our sign convention \eqref{eq-normalizationoneformconnection} is just the opposite. The covariant differential defined above is the same as the one introduced in \cite[p.~192]{Mack05a}. Then, relation \eqref{eq-curvaturetwoformglobal} can be compared with the formula given in \cite[Proposition~5.2.21]{Mack05a} where the covariant differential is used instead of the differential $\hd$. The Bianchi identity can be expressed using this covariant differential as $D \hR = 0$.

Let $\varE$ be a vector bundle.

\begin{proposition}
Let $\nabla^\varE : \Gamma(T\varM) \rightarrow \kD(\varE)$ be a connection on the transitive Lie algebroid $\kD(\varE)$ and let $g \in \Aut(\varE)$ be a gauge element. Then the map $\nabla^{\varE, g} = g^{-1} \circ \nabla^\varE \circ g : \Gamma(T\varM) \rightarrow \kD(\varE)$ is a connection on $\kD(\varE)$. This defines the action of the gauge group $\Aut(\varE)$ on the space of connections on the Lie algebroid $\kD(\varE)$.
\end{proposition}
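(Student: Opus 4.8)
The plan is to verify the three assertions in order: that $\nabla^{\varE,g}$ is a splitting of the short exact sequence defining $\kD(\varE)$, that it is a connection (i.e.\ $C^\infty(\varM)$-linear with the right symbol), and that $g \mapsto (\text{action on }\nabla^\varE)$ is a genuine left (or right) group action.

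First I would check that $\nabla^{\varE,g}$ takes values in $\kD(\varE)$ and is a section of $\sigma$. Since $g \in \Aut(\varE) \subset \algA(\varE)$, conjugation $a \mapsto g^{-1} a g$ makes sense on $\kD(\varE)$: indeed $\algA(\varE)$ is the kernel of $\sigma$ and acts on $\Gamma(\varE)$, so for $D \in \kD(\varE)$ the operator $g^{-1} \circ D \circ g$ is again a first-order differential operator on $\varE$. The key point is the behaviour of the symbol: for $s \in \Gamma(\varE)$ and $f \in C^\infty(\varM)$, one computes $(g^{-1} \circ D \circ g)(fs) = g^{-1}\, D(f\, gs) = g^{-1}\big(f\, D(gs) + (\sigma(D)\cdotaction f)\, gs\big) = f\,(g^{-1}\circ D\circ g)(s) + (\sigma(D)\cdotaction f)\, s$, using that $g$ is $C^\infty(\varM)$-linear. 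Hence $\sigma(g^{-1}\circ D\circ g) = \sigma(D)$, i.e.\ conjugation by $g$ preserves symbols. Applying this with $D = \nabla^\varE_X$ gives $\sigma \circ \nabla^{\varE,g} = \sigma\circ\nabla^\varE = \Id_{\Gamma(T\varM)}$, so $\nabla^{\varE,g}$ is a splitting of the sequence. $C^\infty(\varM)$-linearity of $X \mapsto \nabla^{\varE,g}_X$ follows from that of $\nabla^\varE$ and the $C^\infty(\varM)$-linearity of conjugation (which in turn uses $C^\infty(\varM)$-linearity of $g$ and $g^{-1}$).

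Next I would verify that $\nabla^{\varE,g}$ is indeed a \emph{connection} in the sense of the definition, i.e.\ a $C^\infty(\varM)$-module splitting; by the previous paragraph this is done. Then the group-action statement: writing $g \cdotaction \nabla^\varE := g^{-1}\circ\nabla^\varE\circ g$, I would check $e \cdotaction \nabla^\varE = \nabla^\varE$ (clear, $e=\Id$) and $(g_1 g_2)\cdotaction \nabla^\varE = g_2 \cdotaction (g_1 \cdotaction \nabla^\varE)$, which amounts to the identity $(g_1 g_2)^{-1}\circ \nabla^\varE\circ (g_1 g_2) = g_2^{-1}\circ\big(g_1^{-1}\circ\nabla^\varE\circ g_1\big)\circ g_2$, immediate from $(g_1g_2)^{-1} = g_2^{-1}g_1^{-1}$ and associativity of composition of operators. (Depending on the convention this is a right action; with $g\cdotaction\nabla^\varE = g\circ\nabla^\varE\circ g^{-1}$ it would be a left action — the statement only claims ``an action'', so either reading is fine.)

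None of the steps is genuinely hard; the only thing requiring a moment's care is the symbol computation showing conjugation by $g$ fixes symbols, which is where the hypothesis that $g$ is a \emph{vertical} ($C^\infty(\varM)$-linear) automorphism is used — a general first-order operator conjugated by an arbitrary bundle automorphism need not have the same symbol, but a zeroth-order ($C^\infty(\varM)$-linear) conjugating factor does the job. I would present the proof by first establishing the symbol identity as the core lemma-within-the-proof and then deducing the three claims from it in the order above.
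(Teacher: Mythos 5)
Your proof is correct and follows exactly the route the paper intends: the paper's proof is simply the remark ``this is just a straightforward verification,'' and your argument supplies precisely those details (the key point that conjugation by a $C^\infty(\varM)$-linear automorphism preserves symbols, hence $\sigma\circ\nabla^{\varE,g}=\Id_{\Gamma(T\varM)}$, plus $C^\infty(\varM)$-linearity and the group-action identity). Nothing further is needed.
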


\begin{proof}
This is just a straightforward verification.
\end{proof}

We have seen that on $\lieA$ only infinitesimal gauge transformations make sense. We can define in a coherent way the action of infinitesimal gauge transformations using the previous situation as an archetype and the fact that, given a representation $\phi : \lieA \rightarrow \kD(\varE)$, a connection on $\lieA$ induces a connection on $\kD(\varE)$.

\begin{definition}
\label{def-infinitesimalgaugetransformationofaconnection}
The action of an infinitesimal gauge transformation $\xi \in \lieL$ on a connection $\nabla : \Gamma(T\varM) \rightarrow \lieA$ is given by the $C^\infty(\varM)$-linear map $[\nabla, \iota(\xi)] : \Gamma(T\varM) \rightarrow \lieA$.
\end{definition}

One has $[\nabla_X, \iota(\xi)] \in \iota \circ \lieL$ for any $X \in \Gamma(T\varM)$ and one can consider the map $X \mapsto \nabla^\xi_X = \nabla_X + [\nabla_X, \iota(\xi)] + O(\xi^2)$ as a formal development in Taylor series in $\xi$ of a connection $\nabla^\xi$ on $\lieA$. Using the language of forms, one has:
\begin{proposition}
\label{prop-infinitesimalgaugetransformationontheconnectiononeform}
The connection $1$-form associated to the (formal) connection $\nabla^\xi$ is given by $\alpha^\xi = \alpha + (\hd \xi + [\alpha, \xi]) + O(\xi^2)$ where $\alpha$ is the connection $1$-form associated to $\nabla$.

One has $L_\xi \alpha = - (\hd \xi + [\alpha, \xi])$ so that the action of an infinitesimal gauge transformation on the connection $1$-form is given by the Lie derivative of the connection $1$-form along the direction of the infinitesimal gauge transformation.

The infinitesimal action of a gauge transformation $\xi$ on the curvature is given by $(X,Y) \mapsto [R(X,Y), \xi] \in \lieL$. On the curvature $2$-form  $\hR$, the infinitesimal action of $\xi$ is given by $(\kX, \kY) \mapsto [\hR(\kX, \kY), \xi] = - (L_\xi \hR)(\kX, \kY)$.
\end{proposition}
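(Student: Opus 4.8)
The plan is to verify each of the three formulas in Proposition~\ref{prop-infinitesimalgaugetransformationontheconnectiononeform} by translating the infinitesimal gauge action on connections (Definition~\ref{def-infinitesimalgaugetransformationofaconnection}) into the language of forms on $\lieA$, using the dictionary between connections $\nabla$ and their connection $1$-forms $\alpha$ established in Proposition~\ref{connectiononeformanscurvatureinalgebraicterms}. First I would compute the connection $1$-form $\alpha^\xi$ of $\nabla^\xi$. By definition, for $\kX \in \lieA$ with $X = \rho(\kX)$ one has $\kX = \nabla^\xi_X - \iota\circ\alpha^\xi(\kX)$, while $\kX = \nabla_X - \iota\circ\alpha(\kX)$; subtracting and using $\nabla^\xi_X = \nabla_X + [\nabla_X,\iota(\xi)] + O(\xi^2)$ gives $\iota\circ\alpha^\xi(\kX) = \iota\circ\alpha(\kX) - [\nabla_X,\iota(\xi)] + O(\xi^2)$. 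The task is then to identify $-[\nabla_X,\iota(\xi)]$, viewed as an element of $\iota\circ\lieL$, with $\iota\bigl((\hd\xi)(\kX) + [\alpha(\kX),\xi]\bigr)$. Since $\xi \in \lieL = \Omega^0(\lieA,\lieL)$, the Koszul formula for the adjoint representation gives $(\hd\xi)(\kX) = \ad_\kX(\xi) = [\kX,\xi]$, and decomposing $\kX = \nabla_X - \iota\circ\alpha(\kX)$ yields $\iota[\kX,\xi] = [\nabla_X,\iota(\xi)] - [\iota\circ\alpha(\kX),\iota(\xi)] = [\nabla_X,\iota(\xi)] - \iota[\alpha(\kX),\xi]$. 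Rearranging gives exactly $-[\nabla_X,\iota(\xi)] = \iota\bigl((\hd\xi)(\kX) + [\alpha(\kX),\xi]\bigr)$, and since $\iota$ is injective this establishes $\alpha^\xi = \alpha + (\hd\xi + [\alpha,\xi]) + O(\xi^2)$.

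Next I would prove the identity $L_\xi\alpha = -(\hd\xi + [\alpha,\xi])$. Here $L_\xi = \hd\, i_\xi + i_\xi\hd$ acts on $\Omega^\grast(\lieA,\lieL)$ via the Cartan operation of $\lieL$. On the $1$-form $\alpha$ one has $i_\xi\alpha = \alpha\circ\iota(\xi) = -\xi$ by the normalization~\eqref{eq-normalizationoneformconnection}, so $\hd(i_\xi\alpha) = -\hd\xi$. For the other term, $(i_\xi\hd\alpha)(\kX) = (\hd\alpha)(\iota(\xi),\kX)$, which by the curvature formula~\eqref{eq-curvaturetwoformglobal} equals $\hR(\iota(\xi),\kX) - [\alpha(\iota(\xi)),\alpha(\kX)] = \hR(\iota(\xi),\kX) + [\xi,\alpha(\kX)]$; since $\hR$ is horizontal for the Cartan operation of $\lieL$, $\hR(\iota(\xi),\kX) = 0$, leaving $(i_\xi\hd\alpha)(\kX) = [\xi,\alpha(\kX)] = -[\alpha(\kX),\xi]$. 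Adding the two contributions gives $L_\xi\alpha = -\hd\xi - [\alpha,\xi]$, matching the formula for the linear term of $\alpha^\xi - \alpha$, which is the content of the second assertion.

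Finally, for the curvature I would differentiate $\hR^\xi = \hd\alpha^\xi + \tfrac12[\alpha^\xi,\alpha^\xi]$ at first order in $\xi$. Substituting $\alpha^\xi = \alpha + D\xi + O(\xi^2)$ with $D\xi := \hd\xi + [\alpha,\xi]$, the linear term is $\hd(D\xi) + [\alpha, D\xi] = D(D\xi) = D^2\xi = [\hR,\xi]$ by Proposition~\ref{prop-covariantderivativelieAlieL}. Hence the infinitesimal variation of $\hR$ is $(\kX,\kY)\mapsto[\hR(\kX,\kY),\xi]$; restricting $\kX = \nabla_X$, $\kY = \nabla_Y$ to horizontal lifts and applying $\iota$ recovers $(X,Y)\mapsto[R(X,Y),\xi]$ by the last identity of Proposition~\ref{connectiononeformanscurvatureinalgebraicterms}. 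The identification with $-(L_\xi\hR)(\kX,\kY)$ follows from the same Cartan-calculus computation as for $\alpha$: $L_\xi\hR = \hd i_\xi\hR + i_\xi\hd\hR$, where $i_\xi\hR = 0$ by horizontality and $i_\xi\hd\hR = -i_\xi[\alpha,\hR] = -[i_\xi\alpha,\hR] + [\alpha,i_\xi\hR] = [\xi,\hR]$ using the Bianchi identity and the graded Leibniz rule for $i_\xi$ on the algebra $\Omega^\grast(\lieA,\lieL)$, so $L_\xi\hR = [\xi,\hR] = -[\hR,\xi]$. None of these steps presents a genuine obstacle; the one point requiring care is keeping the sign conventions consistent — in particular the minus sign in the decomposition~\eqref{eq-decompositionwithconnection}, the normalization~\eqref{eq-normalizationoneformconnection}, and the graded Leibniz signs when commuting $i_\xi$ past a $1$-form — so I would check the signs at each stage against the $\Omega^0$ case where everything is explicit.
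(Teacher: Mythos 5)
Your strategy is exactly the direct verification the paper leaves to the reader (its proof is just ``Straightforward computations''), and the second and third parts — the Cartan-calculus evaluation of $L_\xi\alpha$ using $i_\xi\alpha=-\xi$, horizontality of $\hR$, the identification of the linear term of $\hR^\xi$ with $D^2\xi=[\hR,\xi]$, and the computation $L_\xi\hR=[\xi,\hR]=-[\hR,\xi]$ via the Bianchi identity — are correct as written. The first paragraph, however, contains two sign errors that happen to cancel: subtracting the two decompositions of $\kX$ gives $\iota\circ\alpha^\xi(\kX)-\iota\circ\alpha(\kX)=\nabla^\xi_X-\nabla_X=[\nabla_X,\iota(\xi)]+O(\xi^2)$, i.e.\ a \emph{plus} sign, not the minus sign you state; and rearranging your (correct) identity $\iota[\kX,\xi]=[\nabla_X,\iota(\xi)]-\iota[\alpha(\kX),\xi]$ yields $+[\nabla_X,\iota(\xi)]=\iota\bigl((\hd\xi)(\kX)+[\alpha(\kX),\xi]\bigr)$, again with a plus sign. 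Because the two slips compensate, the final formula $\alpha^\xi=\alpha+(\hd\xi+[\alpha,\xi])+O(\xi^2)$ is correct, but the intermediate equations as written are false — which is worth fixing, since (as you yourself note at the end) the sign conventions \eqref{eq-decompositionwithconnection} and \eqref{eq-normalizationoneformconnection} are the one delicate point of this proposition.
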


\begin{proof}
Straightforward computations.
\end{proof}

The notion of connections on Lie algebroids admits generalizations under different names (\cite{Fern02a} and references therein): $\lieA$-connections or $\lieA$-derivatives. Let $\varE$ be a vector bundle over $\varM$.

\begin{definition}
\label{def-AconnectionsonE}
A $\lieA$-connection on $\varE$ is a linear map $\hnabla^\varE : \lieA \rightarrow \kD(\varE)$ compatible with the anchor maps such that
\begin{align*}
\hnabla^\varE_{f\kX} s &= f \hnabla^\varE_\kX s,
&
\hnabla^\varE_{\kX} (s_1 + s_2) &= \hnabla^\varE_\kX s_1 + \hnabla^\varE_\kX s_2,
&
\hnabla^\varE_{\kX} (f s) &= (\rho(\kX) \cdotaction f) s + f \hnabla^\varE_\kX s,
\end{align*}
for any $\kX \in \lieA$, for any $f \in C^\infty(\varM)$ and for any sections $s, s_1, s_2 \in \Gamma(\varE)$.

The curvature of $\hnabla^\varE$ is defined as
$\iota \circ \hR^\varE(\kX,\kY) = [\hnabla^\varE_{\kX}, \hnabla^\varE_{\kY}] - \hnabla^\varE_{[\kX, \kY]}$
where we use the fact that the expression in the right-hand side is $C^\infty(\varM)$-linear on $\Gamma(\varE)$ so that it is in $\iota(\algA(\varE))$.
\end{definition}

Such a $\lieA$-connection on $\varE$ is a ``generalized representation'' (on $\varE$) in the sense that the compatibility with Lie bracket is not required anymore and its curvature measures the failure to be a morphism of Lie algebras. A flat $\lieA$-connection is then a representation in the sense of \ref{subsec-representation} \cite{Fern02a}.

In Definition~\ref{def-AconnectionsonE}, the Lie algebroid $\lieA$ is considered as a replacement of the space of vector fields on the base manifold $\varM$, so that $\hnabla^\varE$ is a generalization of the ordinary notion of covariant derivatives on $\varE$. In particular, $\lieA$ is not required to be a transitive Lie algebroid. 

Differential structures appear in the following easy to prove proposition.

\begin{proposition}
The space of $\lieA$-connections on $\varE$ is an affine space modeled over the vector space $\Omega^1(\lieA, \algA(\varE))$.
\end{proposition}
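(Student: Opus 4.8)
The plan is to establish an affine structure by exhibiting the difference of two $\lieA$-connections as an element of $\Omega^1(\lieA, \algA(\varE))$ and, conversely, showing that adding such an element to a given $\lieA$-connection yields another $\lieA$-connection. First I would note that the space of $\lieA$-connections is nonempty: by definition $\kD(\varE)$ is transitive, so a connection $\nabla^\varE : \Gamma(T\varM) \to \kD(\varE)$ in the sense of the earlier definition exists (locally one can use partitions of unity), and then $\hnabla^\varE = \nabla^\varE \circ \rho$ is a $\lieA$-connection on $\varE$; alternatively one simply remarks that the claim is vacuous if the space is empty and concentrates on the affine difference.

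Next, let $\hnabla^\varE$ and $\hnabla'^{\,\varE}$ be two $\lieA$-connections on $\varE$ and set $\beta(\kX) = \hnabla'^{\,\varE}_\kX - \hnabla^\varE_\kX$ for $\kX \in \lieA$. Both operators are first-order differential operators on $\varE$ with the \emph{same} symbol $\rho(\kX)$ (compatibility with the anchor), so $\beta(\kX) \in \ker\sigma = \iota(\algA(\varE))$; identifying via $\iota$ we may regard $\beta(\kX) \in \algA(\varE)$. The additivity axiom gives $\gR$-linearity (indeed $\gC$-linearity) of $\beta(\kX)$ as a map $\Gamma(\varE) \to \Gamma(\varE)$, and subtracting the two Leibniz rules $\hnabla'^{\,\varE}_\kX(fs) - \hnabla^\varE_\kX(fs) = f(\hnabla'^{\,\varE}_\kX s - \hnabla^\varE_\kX s)$ shows $\beta(\kX)(fs) = f\,\beta(\kX)(s)$, i.e.\ $\beta(\kX)$ is $C^\infty(\varM)$-linear, hence genuinely a section of $\End(\varE)$. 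Subtracting the homogeneity axioms $\hnabla'^{\,\varE}_{f\kX} s - \hnabla^\varE_{f\kX}s = f(\hnabla'^{\,\varE}_\kX s - \hnabla^\varE_\kX s)$ shows $\beta(f\kX) = f\beta(\kX)$, so $\beta$ is $C^\infty(\varM)$-linear in $\kX$; since $\lieA$ has no higher-degree antisymmetry condition in a single slot, $\beta \in \Omega^1(\lieA, \algA(\varE))$.

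For the converse, I would take a fixed $\lieA$-connection $\hnabla^\varE$ and an arbitrary $\beta \in \Omega^1(\lieA,\algA(\varE))$, define $\hnabla'^{\,\varE}_\kX = \hnabla^\varE_\kX + \iota\circ\beta(\kX)$, and verify the three axioms of Definition~\ref{def-AconnectionsonE}: compatibility with the anchor holds because $\iota\circ\beta(\kX)$ has zero symbol so $\sigma(\hnabla'^{\,\varE}_\kX) = \sigma(\hnabla^\varE_\kX) = \rho(\kX)$; $C^\infty(\varM)$-homogeneity in $\kX$ and additivity in $s$ are immediate from the corresponding properties of $\hnabla^\varE$ and the linearity of $\beta(\kX)$; and the Leibniz rule holds because $\iota\circ\beta(\kX)$ contributes only the $C^\infty(\varM)$-linear term $f\,\beta(\kX)(s)$ with no derivative term. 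This gives a free and transitive action of $\Omega^1(\lieA,\algA(\varE))$ on the set of $\lieA$-connections, which is precisely the assertion. None of the steps presents a real obstacle; the only point requiring a little care is the identification of the difference of two first-order operators with the same scalar symbol as an element of $\algA(\varE)$, which is exactly the content of the exact sequence $\algzero \to \algA(\varE) \xrightarrow{\iota} \kD(\varE) \xrightarrow{\sigma} \Gamma(T\varM) \to \algzero$ recalled in Section~\ref{subsec-representation}.
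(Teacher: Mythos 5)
Your argument is correct and is exactly the standard one the paper has in mind: the paper omits the proof as ``easy'', and its own reasoning around equation~\eqref{eq-oneformofalieAconnectiononE} (the difference of two maps into $\kD(\varE)$ with the same scalar symbol lies in $\iota(\algA(\varE))$, and is $C^\infty(\varM)$-linear in $\kX$) is precisely your computation. Your remarks on nonemptiness (pulling back an ordinary connection on $\kD(\varE)$ along $\rho$) and on the vacuity of the antisymmetry condition in degree one are fine; nothing further is needed.
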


Suppose now that $\phi : \lieA \rightarrow \kD(\varE)$ is a representation of $\lieA$ on $\varE$, and let $\hnabla^\varE : \lieA \rightarrow \kD(\varE)$ be a $\lieA$-connection on $\varE$. Then for any $\kX \in \lieA$, the difference $\hnabla^\varE_\kX - \phi(\kX)$ is in the kernel of $\sigma$, so that there exists a $1$-form $\omega^\varE \in \Omega^1(\lieA, \algA(\varE))$ such that
\begin{equation}
\label{eq-oneformofalieAconnectiononE}
\hnabla^\varE_\kX = \phi(\kX) + \iota \circ \omega^\varE(\kX)
\end{equation}
This accordingly completes the diagram~\eqref{eq-diagramrepresentation} as
\begin{equation*}
\xymatrix@R=35pt
{
{\algzero} \ar[r] & {\lieL} \ar[r]^-{\iota}\ar[d]^-{\phi_\lieL}  & {\lieA} \ar[r]^-{\rho} \ar[d]^-{\phi}_-{\hnabla^\varE} \ar[ld]_(0.35)*!/^6pt/{\omega^\varE} & {\Gamma(T\varM)} \ar[r] \ar@{=}[d]  & {\algzero}
\\
{\algzero} \ar[r] & {\algA(\varE)} \ar[r]^-{\iota} & {\kD(\varE)} \ar[r]^-{\sigma} & {\Gamma(T\varM)} \ar[r] & {\algzero}
}
\end{equation*}

\begin{proposition}
\label{prop-connectiononeformcurvaturetwoforms}
A $\lieA$-connection on $\varE$, $\hnabla^\varE : \lieA \rightarrow \kD(\varE)$, is completely determined by the $1$-form $\omega^\varE \in \Omega^1(\lieA, \algA(\varE))$ through the relation~\eqref{eq-oneformofalieAconnectiononE}. 

One has $\hR^\varE \in \Omega^2(\lieA, \algA(\varE))$, which, for any $\kX, \kY \in \lieA$, can be computed as
\begin{equation*}
\hR^\varE(\kX,\kY) =  (\hd_\varE \omega^\varE) (\kX,\kY) + [\omega^\varE(\kX), \omega^\varE(\kY)]
\end{equation*}
The $2$-form $\hR^\varE$ satisfies the Bianchi identity $\hd_\varE \hR^\varE + [\omega^\varE, \hR^\varE] = 0$.

The form $\omega^\varE$ is called the connection $1$-form of $\hnabla^\varE$ and $\hR^\varE$ is its curvature $2$-form.
\end{proposition}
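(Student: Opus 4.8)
The plan is to show three things in sequence: that $\omega^\varE$ recovers $\hnabla^\varE$ (immediate from \eqref{eq-oneformofalieAconnectiononE}), that the stated formula for $\hR^\varE$ holds, and that $\hR^\varE$ lands in $\Omega^2(\lieA, \algA(\varE))$ and satisfies the Bianchi identity. The first claim is trivial: $\iota$ is injective, so $\kX \mapsto \phi(\kX) + \iota\circ\omega^\varE(\kX)$ determines and is determined by $\omega^\varE$.

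For the curvature formula, the main computation is to expand $[\hnabla^\varE_\kX, \hnabla^\varE_\kY] - \hnabla^\varE_{[\kX,\kY]}$ using \eqref{eq-oneformofalieAconnectiononE}. I would write $\hnabla^\varE_\kX = \phi(\kX) + \iota\circ\omega^\varE(\kX)$ and multiply out in $\kD(\varE)$, keeping in mind that $\iota$ is an algebra/bracket monomorphism identifying $\algA(\varE)$ with the kernel of $\sigma$, and that $\widetilde\phi(\kX)\cdotaction a = [\phi(\kX), a]$ by the definition preceding Definition~\ref{def-formsvalueskernelrepresentation}. The cross terms $[\phi(\kX), \iota\circ\omega^\varE(\kY)]$ and $-[\phi(\kY), \iota\circ\omega^\varE(\kX)]$ give $\iota\bigl(\widetilde\phi(\kX)\cdotaction\omega^\varE(\kY) - \widetilde\phi(\kY)\cdotaction\omega^\varE(\kX)\bigr)$; the term $[\iota\circ\omega^\varE(\kX), \iota\circ\omega^\varE(\kY)]$ gives $\iota[\omega^\varE(\kX),\omega^\varE(\kY)]$; and the term $-\hnabla^\varE_{[\kX,\kY]} + \phi([\kX,\kY])$ contributes (after cancelling the $\phi([\kX,\kY])$ against the $[\phi(\kX),\phi(\kY)]$ coming from $\phi$ being a Lie morphism) $-\iota\circ\omega^\varE([\kX,\kY])$. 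Collecting, $\iota\circ\hR^\varE(\kX,\kY) = \iota\bigl(\widetilde\phi(\kX)\cdotaction\omega^\varE(\kY) - \widetilde\phi(\kY)\cdotaction\omega^\varE(\kX) - \omega^\varE([\kX,\kY]) + [\omega^\varE(\kX),\omega^\varE(\kY)]\bigr)$. Recognising the first three terms inside as $(\hd_\varE\omega^\varE)(\kX,\kY)$ via the Koszul formula (with $p=1$ and the representation $\widetilde\phi$), and using injectivity of $\iota$, yields the claimed formula.

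It remains to check that $\hR^\varE$ is $C^\infty(\varM)$-bilinear and antisymmetric, hence genuinely in $\Omega^2(\lieA,\algA(\varE))$: antisymmetry is manifest, and $C^\infty(\varM)$-linearity already follows from Definition~\ref{def-AconnectionsonE}, where it is noted that $[\hnabla^\varE_\kX,\hnabla^\varE_\kY]-\hnabla^\varE_{[\kX,\kY]}$ is $C^\infty(\varM)$-linear over $\Gamma(\varE)$; alternatively it follows from the formula just derived together with $\hd_\varE^2 = 0$-type bookkeeping. For the Bianchi identity I would apply $\hd_\varE$ to $\hR^\varE = \hd_\varE\omega^\varE + \tfrac12[\omega^\varE,\omega^\varE]$ (noting the formula above rewrites as this, using antisymmetry), use $\hd_\varE^2 = 0$, and use that $\hd_\varE$ is a graded derivation for the bracket on $\Omega^\grast(\lieA,\algA(\varE))$, so $\hd_\varE[\omega^\varE,\omega^\varE] = [\hd_\varE\omega^\varE, \omega^\varE] - [\omega^\varE, \hd_\varE\omega^\varE] = 2[\hd_\varE\omega^\varE, \omega^\varE]$; then $\hd_\varE\hR^\varE = [\hd_\varE\omega^\varE,\omega^\varE]$, and adding $[\omega^\varE,\hR^\varE] = [\omega^\varE,\hd_\varE\omega^\varE] + \tfrac12[\omega^\varE,[\omega^\varE,\omega^\varE]]$ the first bracket cancels $\hd_\varE\hR^\varE$ by graded antisymmetry and the last term vanishes by the graded Jacobi identity. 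This is entirely parallel to Propositions~\ref{connectiononeformanscurvatureinalgebraicterms} and~\ref{prop-covariantderivativelieAlieL}.

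The only genuine subtlety — the step I would be most careful about — is the identification of $\iota\colon\algA(\varE)\hookrightarrow\kD(\varE)$ as a morphism of Lie algebras compatible with the $\widetilde\phi$-action, i.e. that $[\phi(\kX),\iota(a)] = \iota\bigl([\phi(\kX),a]\bigr) = \iota\bigl(\widetilde\phi(\kX)\cdotaction a\bigr)$ inside $\kD(\varE)$; this is exactly what makes the cross terms reassemble into $\hd_\varE\omega^\varE$ rather than into some $\hd_\phi$-type expression with values in $\varE$. Everything else is the routine Koszul-formula bookkeeping already invoked, without proof, in the preceding propositions, so I would simply state that these verifications are straightforward and parallel to the transitive case.
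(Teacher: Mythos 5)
Your proof is correct and is exactly the direct verification the paper intends but leaves unwritten (Proposition~\ref{prop-connectiononeformcurvaturetwoforms} is stated without proof, in line with the ``straightforward computation'' treatment of Propositions~\ref{connectiononeformanscurvatureinalgebraicterms} and~\ref{prop-covariantderivativelieAlieL}). Your key step --- that $\iota$ sends $\widetilde{\phi}(\kX)\cdotaction a=[\phi(\kX),a]$ to the commutator $[\phi(\kX),\iota(a)]$ in $\kD(\varE)$, so the cross terms reassemble into $\hd_\varE\omega^\varE$ --- together with the graded-derivation and graded-Jacobi argument for the Bianchi identity, matches the paper's conventions, so nothing further is needed.
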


\begin{proposition}
\label{prop-gaugegroupactiononlieAconnectionsonvarE}
Let $g \in \Aut(\varE)$ be a gauge group element and let $\hnabla^\varE : \lieA \rightarrow \kD(\varE)$ be a $\lieA$-connection on $\varE$. Then $\kX \mapsto \hnabla^{\varE, g}_\kX = g^{-1} \circ \hnabla^\varE_\kX \circ g$ is a $\lieA$-connection on $\varE$.
The connection $1$-form of $\hnabla^{\varE, g}$ is given by
$\omega^{\varE, g} = g^{-1} \omega^{\varE} g + g^{-1} \hd_\varE g$
and the curvature $2$-form is given by $\hR^{\varE, g} = g^{-1} \hR^\varE g$. In all these expressions products with $g$ or $g^{-1}$ take place in $\algA(\varE)$. 
\end{proposition}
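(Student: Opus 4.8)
The plan is to verify the three assertions in turn, each by a direct computation with the definitions of Section~\ref{ConnectionsontransitiveLiealgebroids}; no deep input is needed beyond injectivity of $\iota$ and the fact that order-zero operators (elements of $\algA(\varE)$) are exactly the ones with zero symbol.

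First I would check that $\hnabla^{\varE,g}$ is again a $\lieA$-connection on $\varE$. Since $g$ and $g^{-1}$ are sections of $\End(\varE)$, hence order-zero differential operators, pre- and post-composing the first-order operator $\hnabla^\varE_\kX$ with them does not change the symbol, so $\sigma(\hnabla^{\varE,g}_\kX) = \sigma(\hnabla^\varE_\kX) = \rho(\kX)$; thus $\hnabla^{\varE,g}$ maps into $\kD(\varE)$ and is compatible with the anchors. The $C^\infty(\varM)$-linearity in $\kX$ and additivity in the section are immediate from the corresponding properties of $\hnabla^\varE$. For the Leibniz rule of Definition~\ref{def-AconnectionsonE} I would write, for $f \in C^\infty(\varM)$ and $s \in \Gamma(\varE)$, $\hnabla^{\varE,g}_\kX(fs) = g^{-1}\hnabla^\varE_\kX(g(fs)) = g^{-1}\hnabla^\varE_\kX(f\,gs) = g^{-1}\left((\rho(\kX)\cdotaction f)\,gs + f\,\hnabla^\varE_\kX(gs)\right) = (\rho(\kX)\cdotaction f)s + f\,\hnabla^{\varE,g}_\kX s$, using that $g$ is $C^\infty(\varM)$-linear on $\Gamma(\varE)$.

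Next, for the connection $1$-form relative to the fixed representation $\phi$, I would substitute $\hnabla^\varE_\kX = \phi(\kX) + \iota\circ\omega^\varE(\kX)$ from \eqref{eq-oneformofalieAconnectiononE} into $\hnabla^{\varE,g}_\kX = g^{-1}\circ\hnabla^\varE_\kX\circ g$. The zeroth-order part gives $g^{-1}\circ\iota(\omega^\varE(\kX))\circ g = \iota(g^{-1}\,\omega^\varE(\kX)\,g)$, the product being in $\algA(\varE)$. For the operator part I would use that $\phi(\kX)$ has symbol $\rho(\kX)$, so $\phi(\kX)(gs) = [\phi(\kX),g](s) + g\,\phi(\kX)(s)$ for all $s$, and recognise $[\phi(\kX),g] = \widetilde\phi(\kX)\cdotaction g = (\hd_\varE g)(\kX)$ as the value of the degree-zero differential; hence $g^{-1}\circ\phi(\kX)\circ g - \phi(\kX) = \iota\left(g^{-1}(\hd_\varE g)(\kX)\right)$. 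Adding the two contributions and cancelling the injective $\iota$ gives $\omega^{\varE,g}(\kX) = g^{-1}\,\omega^\varE(\kX)\,g + g^{-1}(\hd_\varE g)(\kX)$, i.e. the stated formula $\omega^{\varE,g} = g^{-1}\omega^\varE g + g^{-1}\hd_\varE g$.

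Finally, for the curvature I would work directly at the level of operators rather than re-expanding $\hd_\varE\omega^{\varE,g}$. From $\hnabla^{\varE,g}_\kX = g^{-1}\circ\hnabla^\varE_\kX\circ g$ the factors $g\circ g^{-1}$ telescope inside a commutator, so $[\hnabla^{\varE,g}_\kX,\hnabla^{\varE,g}_\kY] = g^{-1}\circ[\hnabla^\varE_\kX,\hnabla^\varE_\kY]\circ g$, and likewise $\hnabla^{\varE,g}_{[\kX,\kY]} = g^{-1}\circ\hnabla^\varE_{[\kX,\kY]}\circ g$. Subtracting and using $\iota\circ\hR^\varE(\kX,\kY) = [\hnabla^\varE_\kX,\hnabla^\varE_\kY] - \hnabla^\varE_{[\kX,\kY]}$ together with $g^{-1}\circ\iota(a)\circ g = \iota(g^{-1}ag)$ yields $\iota\circ\hR^{\varE,g}(\kX,\kY) = \iota(g^{-1}\hR^\varE(\kX,\kY)g)$, hence $\hR^{\varE,g}(\kX,\kY) = g^{-1}\hR^\varE(\kX,\kY)g$ by injectivity of $\iota$; this also consistently recovers $\hR^{\varE,g} = \hd_\varE\omega^{\varE,g} + \tfrac12[\omega^{\varE,g},\omega^{\varE,g}]$ of Proposition~\ref{prop-connectiononeformcurvaturetwoforms} if one prefers the form-level check. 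There is no genuine obstacle in this proof; the only point that needs a little care is keeping the composition of operators in $\kD(\varE)$ distinct from the associative product in $\algA(\varE)$, and identifying $[\phi(\kX),g]$ with the degree-zero component of $\hd_\varE g$.
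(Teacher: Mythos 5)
Your proof is correct, and it is exactly the direct verification the paper leaves implicit (the proposition is stated without proof as a straightforward check): composition with the order-zero invertible $g$ preserves the symbol and the Leibniz rule, the identification $[\phi(\kX),g]=\widetilde{\phi}(\kX)\cdotaction g=(\hd_\varE g)(\kX)$ gives the transformed $1$-form, and the telescoping of $g\circ g^{-1}$ inside the commutator gives the curvature formula. No gaps; the care you take in distinguishing operator composition in $\kD(\varE)$ from the product in $\algA(\varE)$ and in using injectivity of $\iota$ is precisely what is needed.
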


Let $\nabla^\varE : \Gamma(T\varM) \rightarrow \kD(\varE)$ be an ordinary connection on the transitive Lie algebroid $\kD(\varE)$. Then $\hnabla^\varE : \lieA \rightarrow \kD(\varE)$ given by $\hnabla^\varE_\kX = \nabla^\varE_{\rho(\kX)}$ defines a $\lieA$-connection on $\varE$. It is easy to verify that this association $\nabla^\varE \mapsto \hnabla^\varE$ embeds the space of connections of the transitive Lie algebroid $\kD(\varE)$ into the space of $\lieA$-connections on $\varE$ and that this injection is compatible with the notions of curvature and gauge transformations.

As mentioned before, Lie algebroids are commonly used as natural replacement of the space of vector fields on a manifold. We would like to use transitive Lie algebroids as replacement of (an infinitesimal version of) principal fiber bundles. This is possible because any principal fiber bundle gives rise to its Atiyah transitive Lie algebroid which encodes most of the differential and geometrical structures used in relation to connection theory \cite{Mack05a,MR1129261}. In this point of view, a vector bundle which carries a representation of a transitive Lie algebroid is a replacement of an associated vector bundle.

In this spirit, we define another notion of generalized connection in a purely algebraic way.

\begin{definition}
\label{def-generalizedconnections}
A generalized connection $1$-form on the transitive Lie algebroid $\lieA$ is a $1$-form $\omega \in \Omega^1(\lieA, \lieL)$.
The curvature of a generalized connection $\omega$ is the $2$-form $\hR = \hd \omega + \frac{1}{2}[\omega, \omega] \in \Omega^2(\lieA, \lieL)$.
\end{definition}

As for infinitesimal gauge transformations, if $\lieL$ carries additional structures, some compatibility relations may be required on this $1$-form, so that \textsl{a priori} the space of ``compatible'' generalized connections is strictly smaller than $\Omega^1(\lieA, \lieL)$.

As seen in Prop.~\ref{connectiononeformanscurvatureinalgebraicterms}, the usual notion of connection on $\lieA$ gives rise to a $\lieL$-normalized $1$-form $\alpha$. This definition says that \emph{any} $1$-form can now plays the role of a connection on $\lieA$. This definition is motivated by some relations we will establish in \ref{ex-derivationsendomorphismalgebraliealgebroid} with connections in noncommutative geometry.

Given a representation $\phi : \lieA \rightarrow \kD(\varE)$ of $\lieA$, a generalized connection $1$-form $\omega \in \Omega^1(\lieA, \lieL)$ induces a $\lieA$-connection on $\varE$ by the relation 
\begin{equation*}
\hnabla^\varE_\kX = \phi(\kX) + \iota \circ \phi_\lieL \circ \omega(\kX)
\end{equation*}
and its curvature is given by $\hR^\varE = \phi_\lieL \circ \hR$. Obviously, all the $\lieA$-connections on $\varE$ can not be obtained in this way.

Using this relation between generalized connection $1$-forms on $\lieA$ and induced $\lieA$-connections on $\varE$, we can define the infinitesimal action of gauge transformations on generalized connection $1$-forms in a coherent way.

\begin{definition}
The action of the infinitesimal gauge transformation $\xi$ on the generalized connection $1$-form $\omega$ is given by the $1$-form $\omega^\xi = \omega + (\hd \xi + [\omega, \xi]) + O(\xi^2)$.
\end{definition}

In an obvious way, the space of ordinary connections on $\lieA$ is contained in the space of generalized connection $1$-forms and this inclusion is compatible with the notions of curvature and (infinitesimal) gauge transformation.

\section{Relation to ordinary geometry and to noncommutative geometry}
\label{Relationtoordinarygeometryandnoncommutativegeometry}

\subsection{Trivial Lie algebroid}
\label{trivialliealgebroid}

Let $\varM$ be a manifold and $\kg$ be a finite dimensional Lie algebra. Consider the vector bundle $\varA = T\varM \oplus (\varM \times \kg)$. The space of smooth sections $\lieA = \Gamma(\varA)$ of $\varA$ is a Lie algebroid for the following structure:
\begin{align}
\label{eq-liebrackettrivialliealgebroid}
\rho(X \oplus \gamma) &= X, 
&
[X \oplus \gamma, Y \oplus \eta] &= [X,Y] \oplus (X \cdotaction \eta - Y \cdotaction \gamma + [\gamma,\eta])
\end{align}
for any $X,Y \in \Gamma(T\varM)$ and $\gamma,\eta \in \Gamma(\varM \times \kg) \simeq C^\infty(\varM)\otimes \kg$.

$\lieA$ is called the trivial Lie algebroid over $\varM$ modeled on the Lie algebra $\kg$. We will use the (not common but convenient) notation $\tla(\varM, \kg)$ for this Lie algebroid (Trivial Lie Algebroid).

$\tla(\varM, \kg)$ is transitive and the vector bundle defining the kernel of $\rho$ is the trivial vector bundle $\varL = \varM \times \kg$. Trivial Lie algebroids are special cases of Atiyah Lie algebroids introduced in the next sub-section. The triviality comes from the triviality of the underlying principal fiber bundle.

The graded commutative differential algebra $(\Omega^\grast(\lieA), \hd_\lieA)$ identifies with the total complex of the bigraded commutative algebra $\Omega^\grast(\varM) \otimes \exter^\grast \kg^\ast$ equipped with the two differential operators
\begin{align*}
\dd : \Omega^\grast(\varM) \otimes \exter^\grast \kg^\ast &\rightarrow \Omega^{\grast+1}(\varM) \otimes \exter^\grast \kg^\ast\\
\ds : \Omega^\grast(\varM) \otimes \exter^\grast \kg^\ast &\rightarrow \Omega^\grast(\varM) \otimes \exter^{\grast+1} \kg^\ast
\end{align*}
where $\dd$ is the de~Rham differential on $\Omega^\grast(\varM)$, and $\ds$ is the Chevalley-Eilenberg differential on $\exter^\grast \kg^\ast$ so that $\hd_\lieA = \dd + \ds$.

In the same way, the graded differential Lie algebra $(\Omega^\grast(\lieA, \lieL), \hd)$ identifies with the total complex of the bigraded Lie algebra $\Omega^\grast(\varM) \otimes \exter^\grast \kg^\ast \otimes \kg$ equipped with the differential $\dd$ and the Chevalley-Eilenberg  differential $\ds'$ on $\exter^\grast \kg^\ast \otimes \kg$ for the adjoint representation of $\kg$ on itself, so that $\hd = \dd + \ds'$. We will use the compact notation $(\Omega^\grast_\tla(\varM,\kg), \hd_\tla)$ for this graded differential Lie algebra.

Let $\eta$ be a representation of $\kg$ on a vector space $\evE$. Then the trivial fiber bundle $\varE = \varM \times \evE$ carries a natural representation of $\tla(\varM, \kg)$ which we denote by $\eta$ also. The graded differential algebra $(\Omega^\grast(\lieA, \algA(\varE)), \hd_\varE)$ identifies with the total complex of the bigraded algebra $\Omega^\grast(\varM) \otimes \exter^\grast \kg^\ast \otimes \caL(\evE)$ with an obvious differential. For further references, we shall denote by $(\Omega^\grast_\tla(\varM,\kg,\evE), \hd_\evE)$ this differential algebra.

\smallskip
At this stage, it is worth recalling that the underlying local fiber theory makes possible to study locally Lie algebroids as (local) trivial Lie algebroids \cite{Mack05a}. A local description of $\lieA$ over an open subset $U \subset \varM$ is an isomorphism of Lie algebroids $S : \tla(U, \kg) \xrightarrow{\simeq} \lieA_U$.

For a transitive Lie algebroid, a local description $(U,S)$ of $\lieA$ is equivalent to a triple $(U, \Psi, \nabla^0)$ where $\Psi : U \times \kg \xrightarrow{\simeq} \varL_U$ is a local trivialization of $\varL$ as a fiber bundle in Lie algebras and $\nabla^0 :  \Gamma(TU) \rightarrow \lieA_U$ is an injective morphism of Lie algebras and $C^\infty(U)$-modules compatible with the anchors and such that:
\begin{align*}
[\nabla^0_X, \nabla^0_Y] &= \nabla^0_{[X,Y]},
&
\nabla^0_{fX} &= f \nabla^0_X,
&
\rho \circ \nabla^0_X &= X,
\end{align*}
for any $X,Y \in \Gamma(TU)$ and any $f \in C^\infty(U)$. For any $X \in \Gamma(TU)$ and any $\gamma \in \Gamma(U \times \kg)$, one has the compatibility relation $[\nabla^0_X, \iota \circ \Psi(\gamma) ] = \iota \circ \Psi(X \cdotaction \gamma)$. The two local descriptions are related by $S(X \oplus \gamma) = \nabla^0_X + \iota \circ \Psi(\gamma)$.

A transitive Lie algebroid can be completely characterized using a Lie algebroid atlas which is a collection of triples $\{(U_i, \Psi_i, \nabla^{0,i})\}_{i \in I}$ such that $\bigcup_{i \in I} U_i = \varM$ and each triple $(U_i, \Psi_i, \nabla^{0,i})$ is a local trivialization of $\lieA$. The family $(U_i, \Psi_i)$ realizes a system of local trivializations of the fiber bundle in Lie algebras $\varL$. On any $U_{ij} = U_i \cap U_j \neq \ensvide$ one can define $\alpha_{ij} : U_{ij} \rightarrow \Aut(\kg)$ with $\alpha_{ij} = \Psi_i^{-1} \circ \Psi_j$.

Any element $\kX \in \lieA$ can be locally trivialized as $(X_i, \gamma_i)$ over each $U_i$ with $S_i(X_i \oplus \gamma_i) = \kX_{|U_i}$ where $X_i = X_{|U_{i}}$ for $X = \rho(\kX)$. On $U_{ij}$ one has ${X_i}_{|U_{ij}} = {X_j}_{|U_{ij}}$ and 
\begin{equation}
\label{eq-gluingrelationsinnerpartliealgebroidelement}
\gamma_i = \alpha_{ij}(\gamma_j) + \chi_{ij}(X)
\end{equation}
for a well-defined local $1$-form $\chi_{ij} \in \Omega^1(U_{ij}) \otimes \kg$. Finally on $U_{ijk} = U_i \cap U_j \cap U_k \neq \ensvide$, one has the cocycle relations
\begin{align}
\label{eq-changeoftrivializations-compatibilitythreeopensubsets}
\alpha_{ik} &= \alpha_{ij} \circ \alpha_{jk}
&
\chi_{ik} &= \alpha_{ij} \circ \chi_{jk} + \chi_{ij}
\end{align}

These local structures will be used to examine relations between Atiyah Lie algebroids and similar structures in noncommutative geometry.

\subsection{The Atiyah Lie algebroid of a principal bundle}
\label{ex-atiyahliealgebroid}

Let $\varP$ be a $G$-principal bundle over a smooth manifold $\varM$. Let $\pi$ be its projection on $\varM$. Denote by $\raR_g : \varP \rightarrow \varP$, $\raR_g(p) = p \cdotaction g$, the right action of $G$ on $\varP$. Denote by $\kg$ the Lie algebra of the Lie group $G$.

Define the two spaces
\begin{align*}
\Gamma_G(T\varP) &= \{ \sfX \in \Gamma(T\varP) \, / \, \raR_{g\,\ast}\sfX = \sfX \text{ for all } g \in G \}
\\
\Gamma_G(\varP, \kg) &= \{ v : P \rightarrow \kg \, / \, v(p \cdotaction g) = \Ad_{g^{-1}} v(p) \text{ for all } g \in G \}
\end{align*}
They are Lie algebras: the Lie bracket on $\Gamma(T\varP)$ restricts to $\Gamma_G(T\varP)$ and the Lie bracket on $\Gamma(\varP, \kg)$, induced by the Lie bracket on $\kg$, restricts to $\Gamma_G(\varP, \kg)$. They are $C^\infty(\varM)$-modules if one maps $f \in C^\infty(\varM)$ to $\pi^\ast f \in C^\infty(\varP)$.

The first space defines vector fields on $\varP$ which are projectable to the base manifold. We denote by $\pi_\ast : \Gamma_G(T\varP) \rightarrow \Gamma(T\varM)$ the well defined morphism of Lie algebras and $C^\infty(\varM)$-modules. For computational purposes, the property of $\raR$-invariance of a vector field $\sfX \in \Gamma(T\varP)$ is equivalent to the following property about its flow $\varP \times \gR \ni (p,t) \mapsto \phi_\sfX(p,t)$, $\phi_\sfX(p,0) = p$, $\frac{d}{dt} \phi_\sfX(p,t) = \sfX_{|\phi_\sfX(p,t)}$:
\begin{equation*}
\phi_\sfX(p\cdotaction g,t) = \phi_\sfX(p,t) \cdotaction g
\end{equation*}
for any $g \in G$.

The second space $\Gamma_G(\varP, \kg)$ is the space of $(\raR, \Ad)$-equivariant maps $v : P \rightarrow \kg$, which is also the space of sections of the associated vector bundle $\varP \times_\Ad \kg$.

For any $\xi \in \kg$ denote by $\xi^\varP$ the fundamental (vertical) vector field on $\varP$ associated to $\xi$ for the right action $\raR$ of $G$. The map $\iota : \Gamma_G(\varP, \kg) \rightarrow \Gamma_G(T\varP)$ defined by
\begin{equation*}
\iota(v)(p) = -v(p)^\varP_{|p} = \left( \frac{d}{dt} p \cdotaction e^{-t v(p)} \right)_{|t=0}
\end{equation*}
is an injective morphism of Lie algebras and it is $C^\infty(\varM)$-linear.

\begin{definition}
The short exact sequence of Lie algebras and $C^\infty(\varM)$-modules
\begin{equation*}
\xymatrix@1{{\algzero} \ar[r] & {\Gamma_G(\varP, \kg)} \ar[r]^-{\iota} & {\Gamma_G(T\varP)} \ar[r]^-{\pi_\ast} & {\Gamma(T\varM)} \ar[r] & {\algzero}}
\end{equation*}
defines $\Gamma_G(T\varP)$ as a transitive Lie algebroid over $\varM$. This is the well-known Atiyah Lie algebroid associated to $\varP$ \cite{MR0086359}.
\end{definition}

When $\varP = \varM \times G$ is trivial, one has $\Gamma_G(T\varP) = \tla(\varM, \kg)$ as mentioned earlier.

We shall be interested in the description of the space of forms on the Lie algebroid $\Gamma_G(T\varP)$ with values in its kernel $\Gamma_G(\varP, \kg)$. To simplify notations, we denote it by $(\Omega^\grast_\lie(\varP, \kg), \hd)$.
For any $\sfX \in \Gamma_G(T\varP)$ and any $v \in \Gamma_G(\varP, \kg)$, one has $[\sfX, \iota(v)] = \iota(\sfX \cdotaction v)$ where $\sfX$ acts as a vector field on the (equivariant) function $v : \varP \rightarrow \kg$. This gives the explicit form of the differential $\hd$ on $\Omega^\grast_\lie(\varP, \kg)$:
\begin{multline}
\label{eq-differentialAtiyah}
(\hd \omega)(\sfX_1, \dots, \sfX_{p+1}) = \sum_{i=1}^{p+1} (-1)^{i+1} \sfX_i \cdotaction \omega(\sfX_1, \dots \omi{i} \dots, \sfX_{p+1})\\
+ \sum_{1 \leq i < j \leq p+1} (-1)^{i+j} \omega([\sfX_i, \sfX_j], \sfX_1, \dots \omi{i} \dots \omi{j} \dots, \sfX_{p+1})
\end{multline}

From now on, we assume that the structure group $G$ is connected and simply connected.

Then a map $v : \varP \rightarrow \kg$ is in $\Gamma_G(\varP, \kg)$ if and only if $\xi^\varP \cdotaction v + [\xi, v] = 0$ for any $\xi \in \kg$. Let us introduce the following subspace of $\Gamma(T\varP \oplus (\varP \times \kg)) = \tla(\varP, \kg)$:
\begin{equation*}
\kg_\equ = \{ \xi^\varP \oplus \xi \ / \ \xi \in \kg \}\, .
\end{equation*}

The Lie algebra $\kg_\equ$ defines a natural Cartan operation on the differential complex $(\Omega^\grast_\tla(\varP,\kg), \hd_\tla)$. Let us denote by $(\Omega^\grast_\tla(\varP,\kg)_{\kg_\equ}, \hd_\tla)$ the differential graded subcomplex of basic elements. The main result of this sub-section is the following identification.

\begin{proposition}
\label{prop-identificationdifferentialcalculusAtiyah}
$(\Omega^\grast_\lie(\varP, \kg), \hd)$ and $(\Omega^\grast_\tla(\varP,\kg)_{\kg_\equ}, \hd_\tla)$ are isomorphic as differential graded complexes.
\end{proposition}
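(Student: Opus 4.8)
The plan is to exhibit an explicit isomorphism at the level of each degree $p$, then check it intertwines the differentials. First I would unwind what $\Omega^p_\tla(\varP,\kg)_{\kg_\equ}$ actually is: an element of $\Omega^p_\tla(\varP,\kg)$ is, by the trivial-Lie-algebroid identification, a $\Omega^\grast(\varP)\otimes\exter^\grast\kg^\ast\otimes\kg$-element of total degree $p$, equivalently a $C^\infty(\varP)$-multilinear antisymmetric map from $\tla(\varP,\kg)^p=\Gamma\!\big(T\varP\oplus(\varP\times\kg)\big)^p$ to $C^\infty(\varP)\otimes\kg$. Being \emph{horizontal} for the Cartan operation of $\kg_\equ$ means $i_{\xi^\varP\oplus\xi}\omega=0$ for all $\xi\in\kg$, i.e.\ $\omega$ vanishes whenever one argument is of the form $\xi^\varP\oplus\xi$; being \emph{invariant} means $L_{\xi^\varP\oplus\xi}\omega=0$. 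The candidate map $\Phi:\Omega^p_\lie(\varP,\kg)\to\Omega^p_\tla(\varP,\kg)_{\kg_\equ}$ is the obvious one: given $\omega\in\Omega^p_\lie(\varP,\kg)$, a $C^\infty(\varM)$-multilinear antisymmetric map $\Gamma_G(T\varP)^p\to\Gamma_G(\varP,\kg)$, define $(\Phi\omega)(\sfX_1\oplus\gamma_1,\dots,\sfX_p\oplus\gamma_p)$ by first projecting each $\sfX_i\oplus\gamma_i\in\tla(\varP,\kg)$ onto its ``horizontal-equivariant'' part and evaluating $\omega$ there. Concretely, the pair $(\Gamma_G(T\varP),\Gamma_G(\varP,\kg))$ sits inside $(\Gamma(T\varP),C^\infty(\varP)\otimes\kg)$ as precisely the $\kg_\equ$-invariants, and any $\sfX\oplus\gamma\in\tla(\varP,\kg)$ can be uniquely written as an element of $\Gamma_G(T\varP)\oplus\Gamma_G(\varP,\kg)$-``type'' modulo the image of $\kg_\equ$ under the $C^\infty(\varP)$-module structure — this is where I need to be careful.

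The cleanest route is to go through the classical description: horizontal elements for $\kg_\equ$ that are also invariant are, at each point $p\in\varP$, forms on $T_p\varP\oplus\kg$ that kill the diagonal copy $\{\xi^\varP_{|p}\oplus\xi\}$ and are $G$-equivariant in the appropriate sense; quotienting $T_p\varP\oplus\kg$ by this diagonal gives back $T_p\varP$ (via $\sfX\oplus\gamma\mapsto \sfX - \gamma^\varP$, say, or the splitting one prefers), and equivariance then matches $R_g$-invariance of vector fields downstairs. So $\Phi$ should be: evaluate $\omega\in\Omega^p_\lie(\varP,\kg)$ on the $\Gamma_G(T\varP)$-components obtained from the $\sfX_i\oplus\gamma_i$ by the canonical horizontal projection, and observe the result is automatically an equivariant $\kg$-valued function, i.e.\ lands in $\Gamma_G(\varP,\kg)$, then reinterpret that as a total-degree-$p$ element. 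I would construct the inverse $\Psi$ directly: given a basic $\widetilde\omega$, restrict it to arguments in $\Gamma_G(T\varP)\subset\tla(\varP,\kg)$ (with zero $\kg$-component), note that horizontality forces this restriction to not depend on the chosen lift, and that it is $C^\infty(\varM)$-multilinear with values in the invariants $\Gamma_G(\varP,\kg)$; this is the desired element of $\Omega^p_\lie(\varP,\kg)$. Checking $\Phi\Psi=\Id$ and $\Psi\Phi=\Id$ is then bookkeeping with the direct-sum splitting of $\tla(\varP,\kg)$ relative to $\kg_\equ$.

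\textbf{The differentials.} Once the bijection is in hand, the remaining task is $\Phi\circ\hd=\hd_\tla\circ\Phi$ (equivalently on the subcomplex of basic elements, where $\hd_\tla$ restricts because basic elements form a subcomplex — this uses the Cartan relations \eqref{eqs-cartanoperation}, namely $[L_\kX,\hd]=0$ and $[i_\kX,\hd]=L_\kX$, so $\hd$ preserves horizontal and invariant elements). Here I would just write both Koszul formulas side by side: the differential $\hd$ on $\Omega^\grast_\lie(\varP,\kg)$ is given explicitly in \eqref{eq-differentialAtiyah} in terms of the vector-field action of $\sfX_i\in\Gamma_G(T\varP)$ on equivariant functions and the Lie bracket of such vector fields; the differential $\hd_\tla$ is the Koszul formula for the bracket \eqref{eq-liebrackettrivialliealgebroid} on $\tla(\varP,\kg)$. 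Plugging in arguments of the form $\sfX_i\oplus\gamma_i$ and using that the bracket on $\tla(\varP,\kg)$ restricted to $\Gamma_G(T\varP)$-type arguments (and the mixed terms $\sfX\cdot\eta$) reproduces exactly the terms in \eqref{eq-differentialAtiyah}, the two formulas agree term by term. The one genuine point to verify is that the extra terms in \eqref{eq-liebrackettrivialliealgebroid} involving the $\kg$-components either vanish by horizontality or get absorbed — this is the crux, and it works precisely because $\kg_\equ$ is a \emph{sub-Lie-algebroid} closed under the bracket, so its Cartan operation is consistent.

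\textbf{Main obstacle.} The main difficulty is not any single computation but getting the identification map $\Phi$ genuinely well-defined and natural: I must pin down the canonical way to view $\Gamma_G(T\varP)$ as the quotient (or a complement) of $\tla(\varP,\kg)$ by the $C^\infty(\varP)$-span of $\kg_\equ$, check this quotient is a $C^\infty(\varM)$-module isomorphic to $\Gamma_G(T\varP)$, and verify that ``horizontal $+$ invariant'' on the $\tla$ side corresponds exactly to ``$C^\infty(\varM)$-multilinear with $G$-equivariant values'' on the $\lie$ side — this last equivalence is where the standing assumption that $G$ is connected and simply connected is used (it lets invariance under the infinitesimal action $L_{\xi^\varP\oplus\xi}$ be upgraded to the stated equivariance property of maps $\varP\to\kg$, exactly as in the characterization of $\Gamma_G(\varP,\kg)$ recalled just before the proposition). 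Everything after that is routine diagram-chasing and comparison of Koszul formulas.
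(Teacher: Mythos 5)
Your route is in essence the paper's: restrict basic forms to arguments in $\Gamma_G(T\varP)\subset\tla(\varP,\kg)$ one way, extend by $C^\infty(\varP)$-multilinearity the other way, use connectedness of $G$ to upgrade infinitesimal $\kg_\equ$-invariance to equivariance of the values, and then compare the two Koszul formulas on invariant arguments. But the step you dismiss as ``bookkeeping with the direct-sum splitting of $\tla(\varP,\kg)$ relative to $\kg_\equ$'' is precisely where the real work lies, and the structural claims you propose to rely on are not correct as stated. Writing $\caZ$ for the $C^\infty(\varP)$-module generated by $\kg_\equ$, the quotient $\tla(\varP,\kg)/\caZ$ is isomorphic (via $\sfX\oplus\gamma\mapsto\sfX-\gamma^\varP$) to \emph{all} of $\Gamma(T\varP)$, not to $\Gamma_G(T\varP)$; and $\Gamma_G(T\varP)\oplus\caZ$ is a proper $C^\infty(\varM)$-submodule of $\tla(\varP,\kg)$, so there is no direct-sum splitting with a complement isomorphic to $\Gamma_G(T\varP)$. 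Consequently neither injectivity nor surjectivity of your correspondence is automatic: injectivity needs that a horizontal, $C^\infty(\varP)$-multilinear form is already determined by its values on invariant vector fields, and surjectivity needs that a $C^\infty(\varM)$-multilinear form on $\Gamma_G(T\varP)$ with values in $\Gamma_G(\varP,\kg)$ admits a \emph{well-defined} extension by $C^\infty(\varP)$-multilinearity to $\tla(\varP,\kg)$. Both rest on the fact that $\Gamma_G(T\varP)$ generates $\Gamma(T\varP)$ as a $C^\infty(\varP)$-module, and—crucially for well-definedness of the extension—that locally one can choose frames of right-invariant vector fields whose change-of-frame coefficients lie in $C^\infty(\varM)$, where the original form is already linear. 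This is exactly what the paper establishes (local families of right-invariant generators over trivializing charts, plus the corollary that $\Gamma_G(T\varP)\oplus\caZ$ generates $\tla(\varP,\kg)$ over $C^\infty(\varP)$), and it is what your plan leaves as the acknowledged ``main obstacle'' rather than proving.

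Your pointwise alternative (forms at each $p\in\varP$ on $T_p\varP\oplus\kg$ killing the diagonal $\{\xi^\varP_{|p}\oplus\xi\}$, with equivariance in $p$) would indeed close this gap, but to use it you must first justify that $C^\infty(\varM)$-multilinearity makes an element of $\Omega^p_\lie(\varP,\kg)$ tensorial over $\varM$ (a bundle map on the Atiyah bundle), hence evaluable pointwise on $\varP$—which is the same local-frame/projectivity argument in different clothing, so it should be stated, not assumed. Two smaller points to make explicit: the values of the restricted form land in $\Gamma_G(\varP,\kg)$ only after checking that $[\xi^\varP\oplus\xi,\sfX\oplus 0]$ is killed by a horizontal form (in fact $[\xi^\varP,\sfX]=0$ for right-invariant $\sfX$), so that invariance reduces to $\xi^\varP\cdotaction\widehat{\omega}(\dots)+[\xi,\widehat{\omega}(\dots)]=0$; and the chain-map property is exactly the observation that on invariant forms evaluated on such arguments $\hd_\tla=\dd+\ds'$ collapses to the formula \eqref{eq-differentialAtiyah}, which your side-by-side comparison does capture.
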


In \cite{Mack05a} it is shown that the differential complex $(\Omega^\grast_\lie(\varP, \kg), \hd)$ is isomorphic to the differential complex of $(\raR, \Ad)$-equivariant forms in $(\Omega^\grast(\varP) \otimes \kg, d)$, which ought to correspond to ${\kg_\equ}$-invariant forms in our terminology.

\smallskip
In order to prove Proposition~\ref{prop-identificationdifferentialcalculusAtiyah}, we need the following technical constructions and preliminary results.

Let $\caZ$ be defined as the $C^\infty(\varP)$-module generated by $\kg_\equ$, and let define $\caN = \Gamma_G(T\varP) \oplus \caZ$. Notice that $\caN$ is a $C^\infty(\varM)$-module but not a $C^\infty(\varP)$-module.

We define the $C^\infty(\varP)$-linear map $\Gamma(\varP \times \kg) \rightarrow \Gamma(T\varP)$, written as $\gamma \mapsto \gamma^\varP$, by the following relation. For any $p \in \varP$, 
\begin{equation}
\label{eq-defgeneralizedfundamentalvectorfield}
\gamma^\varP(p) = \left(\frac{d}{dt} p\cdotaction e^{t \gamma(p)}\right)_{|t=0}
\end{equation}
Notice that $(t,p) \mapsto p\cdotaction e^{t \gamma(p)}$ is not the flow of the vector field $\gamma^\varP$, except in two situations: for a constant $\gamma \in \kg$ this corresponds to the usual definition of fundamental (vertical) vector fields, and for $\gamma \in \Gamma_G(\varP, \kg)$ this corresponds to the vector field $-\iota(\gamma)$.

By definition, any element $z \in \caZ$ is a sum of elements of the type $f (\xi^\varP \oplus \xi)$ where $f \in C^\infty(\varP)$ and $\xi \in \kg$. Using the $C^\infty(\varP)$-linearity of $\gamma \mapsto \gamma^\varP$, one shows that $z = \gamma^\varP \oplus \gamma$ where $\gamma \in \Gamma(\varP \times \kg)$ is the projection of $z \in \caZ \subset \tla(\varP, \kg)$ onto $\Gamma(\varP \times \kg)$.

\begin{proposition}
\begin{enumerate}
\item For any $v \in \Gamma_G(\varP, \kg)$ and any $\gamma^\varP \oplus \gamma \in \caZ$, one has $\gamma^\varP \cdotaction v + [\gamma, v] = 0$.

\item Let $X \oplus \gamma \in \tla(\varP, \kg)$. If for any $\xi \in \kg$ one has $[X \oplus \gamma, \xi^\varP \oplus \xi] \in \caZ$ then $[X \oplus \gamma, \caZ] \subset \caZ$.

\item $\caZ$ is a Lie ideal and a $C^\infty(\varM)$-sub module in $\caN$.
\end{enumerate}
\end{proposition}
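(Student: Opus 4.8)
The plan is to establish the three items in order, since item 2 is the technical engine for item 3, and item 1 is a warm-up that isolates the equivariance computation used later.

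For item 1, I would start from the defining equivariance property of $v \in \Gamma_G(\varP, \kg)$, namely $\xi^\varP \cdotaction v + [\xi, v] = 0$ for every constant $\xi \in \kg$, which holds under the connected/simply connected hypothesis on $G$. Given a general element $\gamma^\varP \oplus \gamma \in \caZ$, the discussion preceding the proposition shows that $\gamma \in \Gamma(\varP \times \kg)$ is the projection of an element of $\caZ$ and that $z = \gamma^\varP \oplus \gamma$ with $\gamma \mapsto \gamma^\varP$ being $C^\infty(\varP)$-linear. Writing $z$ as a finite sum $\sum_a f_a (\xi_a^\varP \oplus \xi_a)$ with $f_a \in C^\infty(\varP)$, $\xi_a \in \kg$, so that $\gamma = \sum_a f_a \xi_a$ and $\gamma^\varP = \sum_a f_a \xi_a^\varP$ by $C^\infty(\varP)$-linearity, I would simply compute $\gamma^\varP \cdotaction v + [\gamma, v] = \sum_a f_a (\xi_a^\varP \cdotaction v + [\xi_a, v]) = 0$, using that the scalar action does not interfere since no derivative falls on the $f_a$'s — both terms are pointwise (zeroth order) in $f_a$. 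This is the quick case.

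For item 2, the point is bilinearity of the bracket in $\tla(\varP, \kg)$ over $C^\infty(\varP)$ only up to anchor terms: $[X \oplus \gamma, f z] = f [X \oplus \gamma, z] + (X \cdotaction f) z$ for $z \in \caZ$ and $f \in C^\infty(\varP)$. Since $\caZ$ is by definition the $C^\infty(\varP)$-module generated by $\kg_\equ$, every $z \in \caZ$ is a finite $C^\infty(\varP)$-combination $\sum_a f_a (\xi_a^\varP \oplus \xi_a)$, so $[X \oplus \gamma, z] = \sum_a f_a [X \oplus \gamma, \xi_a^\varP \oplus \xi_a] + \sum_a (X \cdotaction f_a)(\xi_a^\varP \oplus \xi_a)$. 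The hypothesis gives $[X \oplus \gamma, \xi_a^\varP \oplus \xi_a] \in \caZ$ for each $a$, and the second sum lies in $\caZ$ because $\caZ$ is a $C^\infty(\varP)$-module; hence $[X \oplus \gamma, z] \in \caZ$. I expect this to be the main obstacle, only in the bookkeeping sense: one has to be careful that the anchor-derivative term stays inside $\caZ$ (it does, precisely because $\caZ$ is closed under multiplication by arbitrary smooth functions on $\varP$, unlike $\Gamma_G(T\varP)$), and that the hypothesis is used on the \emph{generators}, the general case following by the module argument just sketched.

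For item 3, I must show $[\caN, \caZ] \subset \caZ$ and that $\caZ$ is stable under multiplication by $C^\infty(\varM)$. The $C^\infty(\varM)$-submodule claim is immediate: $C^\infty(\varM) \subset C^\infty(\varP)$ via $\pi^\ast$ and $\caZ$ is a full $C^\infty(\varP)$-module. For the ideal property, since $\caN = \Gamma_G(T\varP) \oplus \caZ$, it suffices to check $[\sfX, z] \in \caZ$ for $\sfX \in \Gamma_G(T\varP)$ and $[z, z'] \in \caZ$ for $z, z' \in \caZ$. The second reduces, by item 2 applied with $X \oplus \gamma = z \in \caZ \subset \tla(\varP,\kg)$, to checking $[z, \xi^\varP \oplus \xi] \in \caZ$ for each constant $\xi$; but $\xi^\varP \oplus \xi \in \kg_\equ \subset \caZ$ and, expanding $z = \sum_a f_a (\xi_a^\varP \oplus \xi_a)$, the bracket $[\xi_a^\varP \oplus \xi_a, \xi^\varP \oplus \xi] = [\xi_a,\xi]^\varP \oplus [\xi_a,\xi] \in \kg_\equ$ by formula~\eqref{eq-liebrackettrivialliealgebroid} (the $\varP$-components of fundamental fields bracket correctly and $\xi_a^\varP \cdotaction \xi = 0$ since $\xi$ is constant), and then the $C^\infty(\varP)$-module/anchor argument of item 2 finishes it. For $[\sfX, z]$ with $\sfX \in \Gamma_G(T\varP)$: again by item 2 it is enough that $[\sfX, \xi^\varP \oplus \xi] \in \caZ$ for constant $\xi$; here $\sfX$ is $\raR$-invariant, so $[\sfX, \xi^\varP] = L_{\sfX}\, \xi^\varP = 0$ (the flow of $\sfX$ commutes with $\raR_g$, hence preserves fundamental vector fields), and $\sfX \cdotaction \xi = 0$, giving $[\sfX \oplus 0, \xi^\varP \oplus \xi] = 0 \oplus 0 \in \caZ$. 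Assembling these cases yields that $\caZ$ is a Lie ideal in $\caN$, completing the proof.
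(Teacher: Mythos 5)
Your proof is correct and follows essentially the same route as the paper: reduce items 1 and 2 to the generators $f(\xi^\varP\oplus\xi)$ via $C^\infty(\varP)$-linearity and the Leibniz rule, then obtain item 3 from item 2 together with $[\sfX,\xi^\varP\oplus\xi]=0$ for $\raR$-invariant $\sfX$ and $[\kg_\equ,\kg_\equ]\subset\kg_\equ$. The only (inessential) difference is that you derive $[\sfX,\xi^\varP]=0$ from the flow of $\sfX$ commuting with $\raR_g$ stated as preservation of fundamental vector fields, where the paper performs the explicit two-parameter flow computation.
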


\begin{proof}
\begin{enumerate}
\item For any $f \in C^\infty(\varP)$ and any $\xi \in \kg$, one has $(f \xi^\varP)\cdotaction v = f (\xi^\varP \cdotaction v) = -f [\xi, v] = -[f \xi, v]$, which shows the relation is true on the generators of $\caZ$ of the type $f (\xi^\varP \oplus \xi)$.

\item It is sufficient to establish the fact on an element $f (\xi^\varP \oplus \xi)$ where $f \in C^\infty(\varP)$ and $\xi \in \kg$. One has $[X \oplus \gamma, f (\xi^\varP \oplus \xi)] = (X \cdotaction f) (\xi^\varP \oplus \xi) + f [X \oplus \gamma, \xi^\varP \oplus \xi]$. This is obviously in $\caZ$ if $[X \oplus \gamma, \xi^\varP \oplus \xi] \in \caZ$.

\item With $X \oplus \gamma \in \kg_\equ$ in the previous relation, one shows that $\caZ$ is a Lie algebra. For any $\sfX \in \Gamma_G(T\varP)$ and any $\xi^\varP \oplus \xi \in \kg_\equ$ with $\xi \in \kg$, the Lie bracket $[\sfX \oplus 0, \xi^\varP \oplus \xi]$ in $\tla(\varP, \kg)$ takes the form $[\sfX, \xi^\varP] \oplus 0$. Recall that the flow $\phi_{\sfX,t}$ of $\sfX$ satisfies $\phi_{\sfX,t}(p \cdotaction g) = \phi_{\sfX,t}(p) \cdotaction g$ for any $g \in G$ and that the flow of $\xi^\varP$ is $\phi_{\xi, s}(p) = p \cdotaction e^{s \xi}$. Then one has
\begin{align*}
[\xi^\varP, \sfX](p) &= \left(\frac{d}{dt} \left( \frac{d}{ds}  \phi_{\sfX,t}\left( p \cdotaction e^{s \xi} \right)\cdotaction e^{-s \xi} \right)_{|s=0}\right)_{|t=0} \\
&= \left(\frac{d}{dt} \left( \phi_{\sfX,t}( p ) \right)_{|s=0}\right)_{|t=0} = 0 
\end{align*}
By the previous result, this implies that $[\sfX \oplus 0, \xi^\varP \oplus \xi] \in \caZ$ for any $\xi \in \kg$, so that $[\Gamma_G(T\varP), \caZ] \subset \caZ$. This in turn implies that $[\caN, \caZ] \subset \caZ$.
\end{enumerate}
\end{proof}

Notice that in the proof we have shown that $[\Gamma_G(T\varP), \kg_\equ] = 0$.

As a consequence of this proposition, one has the splitting of the short exact sequence of Lie algebroids over $\varM$
\begin{equation*}
\xymatrix@1{{\algzero} \ar[r] & {\caZ} \ar[r] & {\caN} \ar[r]^-{\rho_\varP} & {\Gamma_G(T\varP)} \ar[r] & {\algzero}}
\end{equation*}

\begin{proposition}
\label{prop-localfamilyofgeneratorsAtiyah}
Let $U \subset \varM$ be an open subset which trivializes $\varP$ and which is also a open subset for a chart on $\varM$. Then there exists a family $\{\sfX^i\}_{i=1, \dots, \dim \varP}$ of right invariant vector fields in $\Gamma_G(T\varP_{|U})$ such that:
\begin{enumerate}
\item For any $p \in \varP_{|U}$, $\{\sfX^i(p)\}$ is a basis of $T_p\varP$.

\item This family generates $\Gamma_G(T\varP_{|U})$ as a $C^\infty(U)$-module and the decomposition $\sfX = f_i \sfX^i \in \Gamma_G(T\varP_{|U})$ defines some unique right invariant functions $f_i \in C^\infty(\varP_{|U})$ (so that $f_i \in C^\infty(U)$).

\item This family generates $\Gamma(T\varP_{|U})$ as a $C^\infty(\varP_{|U})$-module and the decomposition $X = g_i \sfX^i \in \Gamma(T\varP_{|U})$ defines some unique functions $g_i \in C^\infty(\varP_{|U})$. 
\end{enumerate}

Two such families differ by linear combinations whose coefficients are functions in $C^\infty(U)$.

\end{proposition}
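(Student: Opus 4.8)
The plan is to pull everything back to a trivialization $\varP_{|U}\simeq U\times G$, over which an $\raR$-invariant frame can be written down explicitly. Since $U$ is a chart domain, fix the coordinate vector fields $\partial_1,\dots,\partial_m$ on $U$ (with $m=\dim\varM$), and fix a basis $\epsilon_1,\dots,\epsilon_n$ of $\kg$ (with $n=\dim G$), letting $\epsilon_a^{R}$ be the associated \emph{right}-invariant vector fields on $G$, so that $\{\epsilon_a^{R}(h)\}$ is a basis of $T_hG$ for every $h$. On $U\times G$ consider the $m+n$ vector fields $\partial_\mu\oplus 0$ and $0\oplus\epsilon_a^{R}$, and transport them to $\varP_{|U}$ through the (right $G$-equivariant) trivialization to obtain a family $\{\sfX^i\}_{i=1,\dots,\dim\varP}$, since $m+n=\dim\varP$. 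The right action of $G$ on $U\times G$ is $(x,h)\mapsto(x,h\cdotaction g)$, so $\partial_\mu\oplus 0$ is obviously $\raR$-invariant, and $0\oplus\epsilon_a^{R}$ is $\raR$-invariant precisely because $\epsilon_a^{R}$ is right-invariant on $G$; since the trivialization intertwines the $G$-actions, every $\sfX^i$ lies in $\Gamma_G(T\varP_{|U})$, and at each point the two blocks span $T_xU$ and $T_hG$, which gives property~(1).

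Properties~(2) and~(3) then follow from property~(1). Indeed $\{\sfX^i\}$ is a smooth frame of $T\varP_{|U}$, so $T\varP_{|U}$ is a free $C^\infty(\varP_{|U})$-module on this frame: every $X\in\Gamma(T\varP_{|U})$ has a unique decomposition $X=\sum_i g_i\sfX^i$ with $g_i\in C^\infty(\varP_{|U})$ (smoothness of $g_i$ coming from inverting the smooth frame matrix), which is property~(3). For property~(2), take $\sfX\in\Gamma_G(T\varP_{|U})$ and write $\sfX=\sum_i f_i\sfX^i$ as in~(3). Applying $\raR_{g\,\ast}$ and using $\raR_{g\,\ast}\sfX=\sfX$ together with the $\raR$-invariance of each $\sfX^i$ gives $\sfX_{p\cdotaction g}=\sum_i f_i(p)\,\sfX^i_{p\cdotaction g}$; comparing with $\sfX_{p\cdotaction g}=\sum_i f_i(p\cdotaction g)\,\sfX^i_{p\cdotaction g}$ and using uniqueness of the decomposition forces $f_i(p\cdotaction g)=f_i(p)$ for all $g\in G$. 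Hence each $f_i$ is constant along the fibres of $\pi$, so it descends to a smooth function on $U$, which is property~(2).

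For the last assertion, let $\{\sfX^i\}$ and $\{\sfY^i\}$ be two such families. Both are frames of $T\varP_{|U}$, so there is a unique matrix $(a^i_j)$ of functions in $C^\infty(\varP_{|U})$, invertible at every point, with $\sfY^i=\sum_j a^i_j\sfX^j$. Applying the argument of property~(2) to each $\sfY^i\in\Gamma_G(T\varP_{|U})$ shows that the $a^i_j$ are $\raR$-invariant, hence lie in $C^\infty(U)$; by Cramer's rule the inverse matrix does too. This is exactly the stated relation between two admissible families.

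None of these steps presents a real obstacle. The one point that needs care is the bookkeeping in the trivialization: the vector fields on the $G$-factor that are invariant under the \emph{right} action $\raR$ are the \emph{right}-invariant ones, so the fundamental vertical fields $\xi^\varP$ (which are left-invariant along the fibres) are \emph{not} among the $\sfX^i$; and one must check, as above, that the coefficient functions produced are smooth and, in the equivariant case, descend to the base.
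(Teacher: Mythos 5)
Your proof is correct and follows essentially the same route as the paper: trivialize $\varP_{|U}\simeq U\times G$, produce a right-invariant frame (the paper right-translates an arbitrary family of vectors along the identity slice, expressed in the left-invariant frame with $\Ad_{g^{-1}}$ coefficients, while you take the explicit product frame $\partial_\mu\oplus 0$, $0\oplus\epsilon_a^{R}$), and then deduce points (2), (3) and the change-of-frame statement from pointwise uniqueness of the coefficients plus $\raR$-invariance. The only difference is this slightly more economical choice of frame, which changes nothing essential.
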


We will call such a family a local family of generators in $\Gamma_G(T\varP)$.

\begin{proof}
In the local trivialization $\varP_{|U} \simeq U \times G$, denote by $x=(x^\mu)$ some local coordinates on $U$, so that we can write $p \simeq (x,g) \in U \times G$ with $g \in G$. One has $T_p \varP \simeq T_{x} U \oplus T_{g} G$. Denote by $\{e_a\}$ a basis of $\kg$, and consider the $e_a$'s as left invariant vector fields on $G$ so that they also define a basis of $T_gG$ at each point $g \in G$.

Consider a particular smooth vector field $x \mapsto X(x) = X^\mu(x) \partial_\mu + \overline{X}^a(x) e_a \in T_{x} U \oplus T_{e} G$ whose coordinate functions do not depend on $g \in G$. Then, applying the right translation $T_{(x,e)}\raR_{g}$ on these vectors, one gets a local right invariant vector fields $(x,g) \mapsto \sfX(x,g) = X^\mu(x, g) \partial_\mu + \overline{X}^a(x, g) e_a \in T_x U \oplus T_{g} G$. By construction, one has $X^\mu(x, g) = X^\mu(x)$ and $\overline{X}^a(x, g) = A^a_b(g) \overline{X}^b(x)$ where $A^a_b(g) e_a = \Ad_{g^{-1}} e_b$, so that $\sfX(x,g) = X^\mu(x) \partial_\mu + A^a_b(g) \overline{X}^b(x) e_a$.

Let us now begin with a smooth family $x \mapsto X^i(x) \in T_{x} U \oplus T_{e} G$ of vector fields as before, for $i=1, \dots, \dim \varP$, which defines a basis of the vector space $T_{x} U \oplus T_{e} G$ for each $x \in U$. Then the family of right invariant extensions $(x,g) \mapsto \sfX^i(x,g) \in T_{x} U \oplus T_{g} G$ defines also a basis of the vector spaces $T_{x} U \oplus T_{g} G$ for each $(x,g) \in U \times G$. This is point~1.

Any right invariant vector field $\sfX \in \Gamma_G(T\varP_{|U})$ can be written as $\sfX(x,g) = f_i(x,g) \sfX^i(x,g) \in T_{x} U \oplus T_{g} G$. Right invariance implies that $f_i(x,g) = f_i(x,g')$ for any $g,g' \in G$, so that $\sfX(x,g) = f_i(x) \sfX^i(x,g)$. The functions $f_i \in C^\infty(U)$ are uniquely determined because the $\sfX^i(x,g)$'s form a basis at each point. This is point~2.

Point~3 is proved similarly.

Let us assume given a second family $\sfY^i \in \Gamma_G(T\varP_{|U})$ of right invariant vector fields with the same properties. Then using point~2, there are unique functions $\alpha^i_j \in C^\infty(U)$ such that $\sfY^i = \alpha^i_j \sfX^j$.
\end{proof}

\begin{corollary}
\label{cor-rightinvariantvectorfieldsandNgeneratestrivialLiealgebroidAtiyah}
$\Gamma_G(T\varP)$ generates the space $\Gamma(T\varP)$ as a $C^\infty(\varP)$-module and $\caN$ generates the space $\tla(\varP, \kg)$ as a $C^\infty(\varP)$-module.
\end{corollary}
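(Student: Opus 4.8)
The plan is to obtain both assertions from the local description furnished by Proposition~\ref{prop-localfamilyofgeneratorsAtiyah}: the first by patching local families of right-invariant generators with a partition of unity, and the second as an elementary algebraic consequence of the first together with the definition of $\caZ$.

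For the statement about $\Gamma(T\varP)$, I would start from a locally finite cover $\{U_\lambda\}_{\lambda\in\Lambda}$ of $\varM$ by open subsets each of which trivializes $\varP$ and lies in a chart domain of $\varM$ (such a cover exists by paracompactness); over each $U_\lambda$, Proposition~\ref{prop-localfamilyofgeneratorsAtiyah} supplies a local family of generators $\{\sfX^i_\lambda\}_{i=1,\dots,\dim\varP}\subset\Gamma_G(T\varP_{|U_\lambda})$ of $\Gamma(T\varP_{|U_\lambda})$ as a $C^\infty(\varP_{|U_\lambda})$-module. Choose a subordinate partition of unity $\{\chi_\lambda\}$ on $\varM$ and auxiliary functions $\psi_\lambda\in C^\infty(\varM)$ with $\operatorname{supp}\psi_\lambda\subset U_\lambda$ and $\psi_\lambda\equiv 1$ near $\operatorname{supp}\chi_\lambda$. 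Since $\pi^\ast\psi_\lambda$ is constant along the fibres, $(\pi^\ast\psi_\lambda)\sfX^i_\lambda$ is right-invariant and, being supported in $\varP_{|\operatorname{supp}\psi_\lambda}$, extends by zero to an element $\sfY^i_\lambda\in\Gamma_G(T\varP)$. Then, given $X\in\Gamma(T\varP)$, I would write $X=g_{\lambda,i}\sfX^i_\lambda$ over $U_\lambda$ (with $g_{\lambda,i}\in C^\infty(\varP_{|U_\lambda})$), set $h_{\lambda,i}\in C^\infty(\varP)$ to be the extension by zero of $(\pi^\ast\chi_\lambda)g_{\lambda,i}$, and check that $\sum_i h_{\lambda,i}\sfY^i_\lambda=(\pi^\ast\chi_\lambda)X$ on all of $\varP$ — this uses $\psi_\lambda\equiv 1$ on $\operatorname{supp}\chi_\lambda$ together with the vanishing of both sides off $\varP_{|\operatorname{supp}\chi_\lambda}$. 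Summing over the (locally finite) family then gives $X=\sum_{\lambda,i}h_{\lambda,i}\sfY^i_\lambda$, a $C^\infty(\varP)$-linear combination of right-invariant vector fields. The only step requiring genuine care — and the main obstacle — is precisely this globalization: one cannot simply multiply $\sfX^i_\lambda$ by $\pi^\ast\chi_\lambda$ and read off coefficients, since the $g_{\lambda,i}$ need not extend smoothly to $\varP$; the fattening functions $\psi_\lambda$ are introduced exactly to put the cut-off onto the vector fields instead of onto the coefficients. If one wants genuinely finite combinations, one first refines $\{U_\lambda\}$ to a cover coloured by $0,\dots,\dim\varM$ whose same-coloured members are pairwise disjoint, and patches the local families of generators within each colour block.

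For the statement about $\caN$, recall that $\tla(\varP,\kg)=\Gamma(T\varP)\oplus\Gamma(\varP\times\kg)$ and $\caN=\Gamma_G(T\varP)\oplus\caZ$, where $\caZ$ is the $C^\infty(\varP)$-module generated by $\kg_\equ=\{\xi^\varP\oplus\xi:\xi\in\kg\}$. Fixing a basis $\{e_a\}$ of $\kg$ and writing any $\gamma\in\Gamma(\varP\times\kg)$ as $\gamma=f^a e_a$ with $f^a\in C^\infty(\varP)$, the $C^\infty(\varP)$-linearity of $\gamma\mapsto\gamma^\varP$ gives $\gamma^\varP\oplus\gamma=f^a(e_a^\varP\oplus e_a)\in\caZ\subset\caN$. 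Since $\gamma^\varP\in\Gamma(T\varP)$, the first part puts $\gamma^\varP\oplus 0$ in the $C^\infty(\varP)$-submodule of $\tla(\varP,\kg)$ generated by $\Gamma_G(T\varP)\oplus 0\subset\caN$; subtracting, $0\oplus\gamma$ lies in the submodule generated by $\caN$, and combined with all of $\Gamma(T\varP)\oplus 0$ this is the whole of $\tla(\varP,\kg)$, as claimed. The verifications that the various extensions by zero are smooth and that the $\sfY^i_\lambda$ are right-invariant are routine and would be left implicit.
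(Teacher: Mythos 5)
Your argument is correct and follows essentially the same route as the paper: reduce the first claim by a partition of unity on $\varM$ to the local statement, which is exactly point~3 of Proposition~\ref{prop-localfamilyofgeneratorsAtiyah}, and obtain the second claim by splitting $X \oplus \gamma$ into $(X-\gamma^\varP)\oplus 0$ plus the element $\gamma^\varP\oplus\gamma \in \caZ$. The only difference is that you spell out the globalization step (the cut-off functions $\psi_\lambda$ and extensions by zero) that the paper leaves implicit; this is a welcome but not essential elaboration.
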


\begin{proof}
Using the structure of $C^\infty(\varM)$-module of $\Gamma(T\varP)$ and a partition of unity subordinated to a covering of $\varM$ by open subsets which both trivialize $\varP$ and are open subsets for charts on $\varM$, the problem is to show that $\Gamma_G(T\varP_{|U})$ generates $\Gamma(T\varP_{|U})$ as a $C^\infty(U)$-module for such any open subset $U \subset \varM$. This is a direct consequence of point~3 of Proposition~\ref{prop-localfamilyofgeneratorsAtiyah}.

Any element $X \oplus \gamma \in \tla(\varP, \kg)$ can be written as $(X - \gamma^\varP) \oplus 0 + (\gamma^\varP \oplus \gamma)$ where $\gamma^\varP \oplus \gamma \in \caZ$. Thus, it remains to show that the vector field $X - \gamma^\varP$ is in the $C^\infty(\varP)$-module generated by $\Gamma_G(T\varP)$, which is indeed the case by the previous result.
\end{proof}

\begin{proof}[Proof of Proposition~\ref{prop-identificationdifferentialcalculusAtiyah}]
Recall that $\Omega^\grast_\tla(\varP,\kg)$ is the space of forms from $\tla(\varP, \kg)$ to $\kg$. Let us define a canonical map
\begin{equation*}
\lambda : \Omega^\grast_\tla(\varP,\kg)_{\kg_\equ} \rightarrow \Omega^\grast_\lie(\varP, \kg).
\end{equation*}
For any $\widehat{\omega} \in \Omega^r_\tla(\varP,\kg)_{\kg_\equ}$, we define $\alpha = \lambda(\widehat{\omega}) \in \Omega^r_\lie(\varP, \kg)$ as follows. To any $\sfX_1, \dots, \sfX_r \in \Gamma_G(T\varP)$, we associate any $\widehat{X}_1, \dots, \widehat{X}_r \in \caN$ such that $\rho_\varP(\widehat{X}_i) = \sfX_i$. Then 
\begin{equation*}
\alpha(\sfX_1, \dots, \sfX_r)(p) = \widehat{\omega}(\widehat{X}_1, \dots, \widehat{X}_r)(p) \in \kg
\end{equation*}
is well defined because, as a $\kg_\equ$-horizontal form, $\widehat{\omega}$ vanishes on $\kg_\equ$, and so on $\caZ$. For any $\xi^\varP \oplus \xi \in \kg_\equ$, the Lie derivative in the direction $\xi^\varP \oplus \xi$ takes the explicit form
\begin{multline}
\label{eq-invarianceomegahatAtiyah}
(L_{\xi^\varP \oplus \xi} \widehat{\omega})(\widehat{X}_1, \dots, \widehat{X}_r) = \xi^\varP \cdotaction \widehat{\omega}(\widehat{X}_1, \dots, \widehat{X}_r) + [\xi, \widehat{\omega}(\widehat{X}_1, \dots, \widehat{X}_r)]\\
- \sum_{i=1}^{r} \widehat{\omega}(\widehat{X}_1, \dots, [ \xi^\varP \oplus \xi, \widehat{X}_i], \dots, \widehat{X}_r)
\end{multline}
The commutators $[ \xi^\varP \oplus \xi, \widehat{X}_i]$ are in $\caZ$ because $\widehat{X}_i \in \caN$ and $\xi^\varP \oplus \xi \in \caZ$, so that each term in the last sum is zero. The $\kg_\equ$-invariance of $\widehat{\omega}$ then implies $0 = L_{\xi^\varP \oplus \xi} \widehat{\omega} = \xi^\varP \cdotaction \widehat{\omega}(\widehat{X}_1, \dots, \widehat{X}_r) + [\xi, \widehat{\omega}(\widehat{X}_1, \dots, \widehat{X}_r)]$, which means that $\widehat{\omega}(\widehat{X}_1, \dots, \widehat{X}_r) \in \Gamma_G(\varP, \kg)$. Then $\alpha \in \Omega^\grast_\lie(\varP, \kg)$.

When applied to invariant forms and to elements in $\caN = \Gamma_G(T\varP) \oplus \caZ$, the differential $\hd_\tla = \dd + \ds'$ reduces to the differential $\hd$ given by \eqref{eq-differentialAtiyah}. The defined map is then a morphism of differential graded complexes.

\medskip
The injectivity of $\lambda$ is a consequence of Corollary~\ref{cor-rightinvariantvectorfieldsandNgeneratestrivialLiealgebroidAtiyah}: if $\alpha(\sfX_1, \dots, \sfX_r) = 0$ for any $\sfX_1, \dots, \sfX_r \in \Gamma_G(T\varP)$, then $\widehat{\omega}(\widehat{X}_1, \dots, \widehat{X}_r) = \alpha(\rho_\varP(\widehat{X}_1), \dots, \rho_\varP(\widehat{X}_r)) = 0$ for any $\widehat{X}_1, \dots, \widehat{X}_r \in \caN$, so that $\widehat{\omega} = 0$ by $C^\infty(\varP)$-linearity.

Let us show that $\lambda$ is surjective. It is sufficient to work over a subset $U \subset \varM$ which trivializes $\varP$. Consider a local family $\{\sfX^i\}_{i=1, \dots, \dim \varP}$ of generators in $\Gamma_G(T\varP)$ over $U$. For any $\alpha \in \Omega^r_\lie(\varP_{|U}, \kg)$, let $\widehat{\omega}$ be the $C^\infty(U)$-multilinear antisymmetric map defined on $\caN^r_{|U}$ with values in maps $\varP_{|U} \rightarrow \kg$ given by
\begin{equation*}
\widehat{\omega}(\sfX^{i_1} + z_1, \dots, \sfX^{i_r} + z_r) = \alpha(\sfX^{i_1}, \dots, \sfX^{i_r})
\end{equation*}
for any $z_k \in \caZ_{|U}$. Because two local families of generators in $\Gamma_G(T\varP_{|U})$ are related by functions in $C^\infty(U)$ and because $\alpha$ is $C^\infty(U)$-linear, this definition makes sense and does not depend on the choice of the local family of generators in $\Gamma_G(T\varP)$. 

Using the uniqueness of the decomposition $X = g_i \sfX^i \in \Gamma(T\varP_{|U})$ we can impose $C^\infty(\varP_{|U})$-linearity on $\widehat{\omega}$ in order to extend $\widehat{\omega}$ as a $C^\infty(\varP_{|U})$-multilinear antisymmetric map defined on $(\Gamma(T\varP_{|U} \oplus (U \times \kg))^r = \tla(\varP_{|U}, \kg)^r$, so that $\widehat{\omega} \in \Omega^r_\tla(\varP_{|U},\kg)$. Once again, note that this extension is well defined because changing the local families of generators in $\Gamma_G(T\varP_{|U})$ requires only coefficients in $C^\infty(U)$ for which linearity is already given. By construction $\widehat{\omega}$ is $\kg_\equ$-horizontal. Using the fact that $\alpha(\sfX^{i_1}, \dots, \sfX^{i_r}) \in \Gamma_G(\varP_{|U}, \kg)$, it is easy to show, using \eqref{eq-invarianceomegahatAtiyah}, that $\widehat{\omega}$ is $\kg_\equ$-invariant. This means that $\widehat{\omega} \in \Omega^r_\tla(\varP_{|U},\kg)_{\kg_\equ}$.
\end{proof}

\smallskip
Let us now consider connections of the Atiyah Lie algebroid. It is well-known that connections on the Atiyah Lie algebroid are exactly connections on the principal fiber bundle from which this Lie algebroid is constructed. The splitting $\nabla : \Gamma(T\varM) \rightarrow \Gamma_G(T\varP)$ corresponds to the horizontal lift $X \mapsto X^h$ defined by the connection $1$-form $\omega$ on the principal fiber bundle.

In terms of differential forms, the connection $1$-form $\omega$ on $\varP$ is an element of $\Omega^1(\varP) \otimes \kg$ which is $(\raR, \Ad)$-equivariant and normalized on vertical vector fields by $\omega(\xi^\varP) = \xi$ for any $\xi \in \kg$. We consider $\omega$ as an element of $\Omega^1(\varP) \otimes \exter^0 \kg^\ast \otimes \kg$. The $(\raR, \Ad)$-equivariance infinitesimally reads $0 = L_{\xi^\varP \oplus \xi} \omega = L_{\xi^\varP} \omega + [\xi, \omega]$ for any $\xi^\varP \oplus \xi \in \kg_\equ$.

Let $\theta \in \exter^1 \kg^\ast \otimes \kg$ be the Maurer-Cartan $1$-form on $G$. Recall that $\theta(\xi) = \xi$ for any $\xi \in \kg$. We can extend $\theta$ by $C^\infty(\varP)$-linearity to an element $\theta \in C^\infty(\varP) \otimes \exter^1 \kg^\ast \otimes \kg$ so that $\theta(\gamma) = \gamma$ for any $\gamma : \varP \rightarrow \kg$. Then, for any $\xi \in \kg$ and $\gamma : \varP \rightarrow \kg$, one has $(L_{\xi^\varP \oplus \xi}\theta)(\gamma) = \xi^\varP \cdotaction \theta(\gamma) + [\xi, \theta(\gamma)] - \theta([\xi^\varP + \xi, \gamma]) = \xi^\varP \cdotaction \gamma + [\xi, \gamma] - (\xi^\varP \cdotaction \gamma + [\xi, \gamma]) = 0$.

It is now easy to show that the $1$-form $\widehat{\omega} = \omega - \theta \in \Omega^1_\tla(\varP,\kg)$ is $\kg_\equ$-basic, and that
\begin{equation}
\label{eq-basicconnectiononeform}
\widehat{\omega} = \omega - \theta \in \Omega^1_\tla(\varP,\kg)_{\kg_\equ}
\end{equation}
corresponds by Proposition~\ref{prop-identificationdifferentialcalculusAtiyah} to the $1$-form $\alpha \in \Omega^1_\lie(\varP, \kg)$ associated in Proposition~\ref{connectiononeformanscurvatureinalgebraicterms} to the connection $\nabla$. Conversely, Proposition~\ref{prop-identificationdifferentialcalculusAtiyah} associates to a $1$-form $\alpha \in \Omega^1_\lie(\varP, \kg)$, such that $\alpha (\iota(v)) = -v$, a $1$-form $\widehat{\omega} \in \Omega^1_\tla(\varP,\kg)$ which is basic for the Cartan operation of $\kg_\equ$. Necessarily one has $\widehat{\omega} = \omega - \theta$ where $\omega \in \Omega^1(\varP) \otimes \kg$ has the properties of an ordinary connection $1$-form on $\varP$.

The curvature of $\alpha$ as a Lie algebroid connection is given by the expression $\hR(\sfX, \sfY) = (\hd \alpha)(\sfX, \sfY) + [\alpha(\sfX), \alpha(\sfY)]$ for any $\sfX, \sfY \in \Gamma_G(T\varP)$. For any $X \oplus \gamma, Y \oplus \eta \in \tla(\varP, \kg)$, a straightforward computation shows that
\begin{multline*}
(\hd_\tla \widehat{\omega})(X \oplus \gamma, Y \oplus \eta) + [\widehat{\omega}(X \oplus \gamma), \widehat{\omega}(Y \oplus \eta)]\\
= (d \omega)(X, Y) + [\omega(X), \omega(Y)] = \Omega(X,Y)
\end{multline*}
where $\Omega$ is the curvature of the connection $\omega$. Using the usual properties of the curvature $2$-form $\Omega \in \Omega^2(\varP) \otimes \kg$, it is readily shown that as an element in $\Omega^2_\tla(\varP,\kg)$ it is $\kg_\equ$-basic. By Proposition~\ref{prop-identificationdifferentialcalculusAtiyah} and the previous relations, $\Omega \in \Omega^2_\tla(\varP,\kg)_{\kg_\equ}$ corresponds to $\hR \in \Omega^2_\lie(\varP, \kg)$.

\medskip
In the same way, to any generalized connection $1$-form $\alpha$ on $\Gamma_G(T\varP)$ in the sense of Definition~\ref{def-generalizedconnections}, one can associate by Proposition~\ref{prop-identificationdifferentialcalculusAtiyah} a $1$-form 
\begin{equation}
\label{eq-basicgeneralizedconnectiononeform}
\widehat{\omega} =\omega + \phi \in \Omega^1_\tla(\varP,\kg) = (\Omega^1(\varP) \otimes \kg) \oplus (C^\infty(\varP) \otimes \exter^1 \kg^\ast \otimes \kg)
\end{equation}
which is basic for the Cartan operation of $\kg_\equ$. Note that $\omega$ and $\phi$ are $\kg_\equ$-invariant, but $\omega$ is not necessarily a connection $1$-form on $\varP$ and $\phi$ is not necessarily related to the Maurer-Cartan form on $G$.

\medskip
Let $\evE$ be a vector space and $\ell_g : \evE \rightarrow \evE$ be a linear representation of the structure group $G$. Denote by $\varE = \varP \times_\ell \evE$ the associated vector bundle. Recall that the space of sections of $\varE$ is given by
\begin{equation*}
\Gamma(\varE) = \{ s : \varP \rightarrow \evE \, / \, s(p \cdotaction g) = \ell_{g^{-1}} s(p) \text{ for all } g \in G \}.
\end{equation*}
$\varE$ carries a natural representation of the Lie algebroid $\Gamma_G(T\varP)$ given by
\begin{align*}
\phi : \Gamma_G(T\varP) &\rightarrow \kD(\varE)
&
\phi(\sfX)(s) &= \sfX \cdotaction s
\end{align*}

Denote by $\End(\evE)$ the space of linear operators on the vector space $\evE$ and by $\eta$ the representation of the Lie algebra $\kg$ on $\evE$ induced by $\ell$, so that $\eta(v) \in \End(\evE)$ for any $v \in \kg$. The kernel $\algA(\varE)$ of $\kD(\varE)$ as a transitive Lie algebroid is
\begin{equation*}
\algA(\varE) = \{ a : \varP \rightarrow \End(\evE) \, / \, a(p \cdotaction g) = \ell_{g^{-1}} \circ a(p) \circ \ell_g \text{ for all } g \in G \}
\end{equation*}
and the $\phi$ applied to $v \in \Gamma_G(\varP, \kg)$ is just $\eta(v) : \varP \rightarrow \End(\evE)$. The induced representation $\widetilde{\phi}$ of $\Gamma_G(T\varP)$ on $\End(\varE)$ is given by $\widetilde{\phi}(\sfX) \cdotaction a = \sfX \cdotaction a$.

The Lie algebra $\kg_\equ$ defines a Cartan operation on the graded differential algebra $(\Omega^\grast_\tla(\varM,\kg,\evE), \hd_\evE)$ introduced in sub-section~\ref{trivialliealgebroid}. We denote by $(\Omega^\grast_\tla(\varM,\kg,\evE)_{\kg_\equ}, \hd_\evE)$ the graded differential sub-algebra of basic elements for this operation. By adapting the proof of Proposition~\ref{prop-identificationdifferentialcalculusAtiyah}, one shows that

\begin{proposition}
\label{prop-identificationdifferentialcalculusrepresentation}
$(\Omega^\grast(\Gamma_G(T\varP), \algA(\varE)), \hd_\varE)$ and $(\Omega^\grast_\tla(\varP,\kg,\evE)_{\kg_\equ}, \hd_\evE)$ are isomorphic as differential graded algebras. 
\end{proposition}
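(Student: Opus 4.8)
The plan is to follow the pattern established in the proof of Proposition~\ref{prop-identificationdifferentialcalculusAtiyah}, replacing $\kg$-valued forms with $\caL(\evE)$-valued forms. Concretely, I would construct a canonical map
\begin{equation*}
\lambda_\varE : \Omega^\grast_\tla(\varP,\kg,\evE)_{\kg_\equ} \rightarrow \Omega^\grast(\Gamma_G(T\varP), \algA(\varE))
\end{equation*}
exactly as before: for $\widehat{\omega} \in \Omega^r_\tla(\varP,\kg,\evE)_{\kg_\equ}$ and $\sfX_1,\dots,\sfX_r \in \Gamma_G(T\varP)$, choose lifts $\widehat{X}_i \in \caN$ with $\rho_\varP(\widehat{X}_i) = \sfX_i$, and set $\alpha(\sfX_1,\dots,\sfX_r)(p) = \widehat{\omega}(\widehat{X}_1,\dots,\widehat{X}_r)(p) \in \End(\evE)$. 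This is well-defined because $\kg_\equ$-horizontality forces $\widehat{\omega}$ to vanish on $\caZ$, and the lift is unique modulo $\caZ$. The only new point relative to Proposition~\ref{prop-identificationdifferentialcalculusAtiyah} is checking that the output lands in $\algA(\varE)$, i.e.\ that $\widehat{\omega}(\widehat{X}_1,\dots,\widehat{X}_r) : \varP \to \End(\evE)$ satisfies the equivariance condition $a(p\cdotaction g) = \ell_{g^{-1}}\circ a(p)\circ \ell_g$; infinitesimally this is $\xi^\varP \cdotaction a + [\eta(\xi), a] = 0$ for all $\xi\in\kg$, which follows from the $\kg_\equ$-invariance of $\widehat{\omega}$ combined with the fact that the commutators $[\xi^\varP\oplus\xi,\widehat{X}_i]$ lie in $\caZ$ and so are killed by $\widehat{\omega}$ — this is the verbatim analogue of equation~\eqref{eq-invarianceomegahatAtiyah}, now with the representation $\eta$ on $\End(\evE)$ in place of the adjoint action.

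Next I would verify that $\lambda_\varE$ is a morphism of differential graded \emph{algebras}. Multiplicativity is immediate since the product on both sides is pointwise composition in $\End(\evE)$ and $\lambda_\varE$ is just restriction/evaluation; compatibility with the differentials follows because, when $\hd_\evE = \dd + \ds'$ is applied to $\kg_\equ$-invariant forms and evaluated on elements of $\caN = \Gamma_G(T\varP)\oplus\caZ$, the $\ds'$ (Chevalley–Eilenberg) part drops out and what remains is exactly the Koszul formula defining $\hd_\varE$ on $\Omega^\grast(\Gamma_G(T\varP),\algA(\varE))$, using $[\sfX,\iota(a)] = \iota(\sfX\cdotaction a)$ and $[\Gamma_G(T\varP),\kg_\equ]=0$. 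Injectivity of $\lambda_\varE$ is identical to the earlier argument: if $\alpha$ vanishes on all tuples from $\Gamma_G(T\varP)$, then by Corollary~\ref{cor-rightinvariantvectorfieldsandNgeneratestrivialLiealgebroidAtiyah} and $C^\infty(\varP)$-linearity $\widehat{\omega}$ vanishes on all of $\tla(\varP,\kg)$, hence $\widehat{\omega}=0$.

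For surjectivity I would again work locally over an open $U\subset\varM$ trivializing $\varP$ and supporting a chart, pick a local family of generators $\{\sfX^i\}$ in $\Gamma_G(T\varP)$ from Proposition~\ref{prop-localfamilyofgeneratorsAtiyah}, and for $\alpha \in \Omega^r(\Gamma_G(T\varP_{|U}),\algA(\varE_{|U}))$ define $\widehat{\omega}(\sfX^{i_1}+z_1,\dots,\sfX^{i_r}+z_r) = \alpha(\sfX^{i_1},\dots,\sfX^{i_r})$ for $z_k\in\caZ_{|U}$, then extend by $C^\infty(\varP_{|U})$-multilinearity using the unique decomposition $X = g_i\sfX^i$ in $\Gamma(T\varP_{|U})$. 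Independence of the chosen generators holds because two such families differ by $C^\infty(U)$-coefficients and $\alpha$ is $C^\infty(U)$-linear, and $\kg_\equ$-horizontality is built in; $\kg_\equ$-invariance again follows from~\eqref{eq-invarianceomegahatAtiyah} together with $\alpha(\sfX^{i_1},\dots,\sfX^{i_r})\in\algA(\varE_{|U})$. I expect the main (though still routine) obstacle to be bookkeeping: making sure that all the structural checks — that the local construction glues to a global $\widehat{\omega}$, that it is $C^\infty(\varP)$-multilinear and antisymmetric, and that it is genuinely basic — go through with the \emph{noncommutative} target $\algA(\varE)$, where one must carry the conjugation action $a\mapsto \ell_{g^{-1}}\circ a\circ\ell_g$ (equivalently the adjoint-type action $[\eta(\xi),-]$) rather than the simpler adjoint action on $\kg$. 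Since all of this is the stated ``adapting the proof of Proposition~\ref{prop-identificationdifferentialcalculusAtiyah},'' I would present it compactly, pointing to the earlier proof for the steps that are word-for-word the same and spelling out only the places where $\End(\evE)$-valuedness enters.
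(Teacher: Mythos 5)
Your proposal is correct and is exactly what the paper does: the paper's entire ``proof'' is the remark that one adapts the proof of Proposition~\ref{prop-identificationdifferentialcalculusAtiyah}, and your adaptation (same map $\lambda$, well-definedness from $\kg_\equ$-horizontality on $\caZ$, equivariance from $\kg_\equ$-invariance now with the commutator action $[\eta(\xi),-]$ on $\End(\evE)$, injectivity and surjectivity via the local generators of Proposition~\ref{prop-localfamilyofgeneratorsAtiyah} and Corollary~\ref{cor-rightinvariantvectorfieldsandNgeneratestrivialLiealgebroidAtiyah}, plus the extra multiplicativity check) is precisely the intended argument.
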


This identification permits to associate to any $\Gamma_G(T\varP)$-connection $\hnabla^\varE$ on $\varE$ a $1$-form $\widehat{\omega} \in \Omega^1_\tla(\varP,\kg,\evE)$ which is basic for the Cartan operation of $\kg_\equ$.

\medskip
As already recalled, it is well known that $\Gamma_G(\varP, \kg)$ is the Lie algebra of the gauge group of the principal fiber bundle $\varP$. The notion of infinitesimal gauge transformations given by Definition~\ref{def-infinitesimalgaugetransformations} and the corresponding action on connections given by Definition~\ref{def-infinitesimalgaugetransformationofaconnection} (and its explicit expression on the connection $1$-forms given by Proposition~\ref{prop-infinitesimalgaugetransformationontheconnectiononeform}) turn out to be generalizations of those obtained in the context of the Atiyah Lie algebroid. For Atiyah Lie algebroids, one can define gauge transformations as vertical automorphisms of the principal fiber bundle $\varP$, so that the gauge group turns out to be well defined in this case.

\smallskip
Local trivializations of the Lie algebroid $\Gamma_G(T\varP)$ are constructed using local trivializations of the principal fiber bundle $\varP$. Let $\{(U_i, \varphi_i)\}$ be a system of trivializations of $\varP$, with $\varphi_i : \varP_{|U_i} \xrightarrow{\simeq} U_i \times G$ and let $\phi_i=\varphi_i^{-1}$. Any $\sfX \in \Gamma_G(T\varP)$ is decomposed over $U_i$ into two pieces: $X = \pi_\ast(\sfX) \in \Gamma(U)$ and $\gamma_i : U_i \rightarrow \kg$. The explicit expressions for the triples $(U_i, \Psi_i, \nabla^{0, i})$ are given by:
\begin{align*}
(\nabla^{0, i}_X)(p) &= ({\phi_i}_\ast X)(p)
&
\Psi_i(\gamma)\circ \phi_i(x,g) &= \Ad_{g^{-1}} \gamma(x) 
\end{align*}

On $U_{ij} = U_i \cap U_j \neq \ensvide$, denote by $g_{ij} : U_{ij} \rightarrow G$ the transition functions for $\varP$. Then using the explicit expression for $\iota$, one gets $\gamma_i = g_{ij} \gamma_j g_{ij}^{-1} + g_{ij} d g_{ij}^{-1}(X)$, so that
\begin{align}
\label{eq-alphachiatiyah}
\alpha_{ij}(\gamma) &= g_{ij} \gamma g_{ij}^{-1}
&
\chi_{ij}(X) &= g_{ij} d g_{ij}^{-1}(X)
\end{align}
All the relations which are required to be satisfied both by the $\alpha_{ij}$'s and the $\chi_{ij}$'s can be easily established using some standard computations performed in the theory of fiber bundles.

This description of $\Gamma_G(T\varP)$ in terms of local trivializations will be used in the next sub-section.

\subsection{The Lie algebroid of the derivations of the endomorphism algebra}
\label{ex-derivationsendomorphismalgebraliealgebroid}

The Lie algebroid described in this sub-section takes its roots into a particular situation well studied in the context of the derivations based noncommutative geometry. This example will help us to understand the relationship between the generalized notions of connections on Lie algebroids and in noncommutative geometry.

Let $\varE$ be a $SL(n, \gC)$-vector bundle over the manifold $\varM$ with fiber $\gC^n$. Consider $\End(\varE)$, the fiber bundle of endomorphisms of $\varE$. We shortly denote by $\algA$ the algebra of sections of $\End(\varE)$ (previously denoted by $\algA(\varE)$). This is the algebra of the endomorphism algebra of a $SL(n, \gC)$-vector bundle.

This algebra has been studied from the point of view of its noncommutative geometry based on derivations \cite{Mass14,Mass15}. In Appendix~\ref{briefreviewofsomenoncommutativestructures}, a brief review is given for the essential structures introduced according to this noncommutative geometry. The noncommutative geometry of the endomorphism algebra $\algA$ presented here has been summarized in \cite{Mass30}. Here we collect only the  necessary results to render explicit the relationship with the notion of connection on Lie algebroids.

Let us denote by $\der(\algA)$ the Lie algebra of derivations of $\algA$. This is also a module over the center of $\algA$, which is $C^\infty(\varM)$. By the $\ad$-representation the ideal of inner derivations $\Int(\algA)$ identifies with $\algA_0$, the traceless elements in $\algA$. Define $\rho : \der(\algA) \rightarrow \der(\algA)/\Int(\algA) = \Out(\algA)$ the quotient map. This map can be explicitly identified with the restriction of any $\kX \in \der(\algA)$ to the center. So that $\rho : \der(\algA) \rightarrow \Gamma(T\varM)$ is both a morphism of Lie algebras and of $C^\infty(\varM)$-modules.

The short exact sequence of Lie algebras and $C^\infty(\varM)$-modules
\begin{equation}
\label{eq-sesderivationsendomorphismalgebra}
\xymatrix@1{{\algzero} \ar[r] & {\algA_0} \ar[r]^-{\ad} & {\der(\algA)} \ar[r]^-{\rho} & {\Gamma(T\varM)} \ar[r] & {\algzero}}
\end{equation}
defines $\der(\algA)$ as a transitive Lie algebroid over $\varM$, with $\iota = \ad$ \cite{Mass14}.

Let $\varP$ be the principal $SL(n, \gC)$-fiber bundle to which $\varE$ (and then $\End(\varE)$) is associated and consider its Atiyah Lie algebroid $\Gamma_G(T\varP)$.

Let $\{(U_i, \varphi_i)\}$ be a system of local trivializations of $\varP$, with $\varphi_i : \varP_{|U_i} \xrightarrow{\simeq} U_i \times SL(n, \gC)$. This also defines a system of trivializations of $\End(\varE)$. Denote by $\Psi_i : \algA_{|U_i} \xrightarrow{\simeq} C^\infty(U_i) \otimes M_n(\gC)$ the associated trivialization of $\End(\varE)$ induced by $(U_i, \varphi_i)$. Let $\theta$ be the Maurer-Cartan form on $SL(n, \gC)$. Let $\pi_2 : U_i \times SL(n, \gC) \rightarrow SL(n, \gC)$ be the projection on the second factor. Then $\omega_i = \varphi_i^\ast \pi_2^\ast \theta$ is a local flat connection on $\varP_{|U_i}$. This flat connection defines a covariant derivative $\nabla^{0, i}$ on $\End(\varE_{|U_i})$, which is also a derivation of $\algA_{|U_i}$. Then the triples $(U_i, \Psi_i, \nabla^{0, i})$ define an Lie algebroid atlas for $\der(\algA)$.

On $U_{ij} = U_i \cap U_j \neq \ensvide$, denote by $g_{ij} : U_{ij} \rightarrow SL(n, \gC)$ the transition functions for $\varP$. Then one has
\begin{align*}
\alpha_{ij}(\gamma) &= g_{ij} \gamma g_{ij}^{-1}
&
\chi_{ij}(X) &= g_{ij} (X \cdotaction g_{ij}^{-1}).
\end{align*}

Any element $\kX \in \der(\algA)$ can be written locally as $\kX_i = X \oplus \gamma_i$, with $\tr(\gamma_i) = 0$. The gluing relations of these trivializations on $U_{ij} = U_i \cap U_j \neq \ensvide$ are: $\gamma_i = g_{ij} \gamma_j g_{ij}^{-1} + g_{ij} (X \cdotaction g_{ij}^{-1})$. Comparing these local trivializations and the ones obtained in \eqref{eq-alphachiatiyah} for $\Gamma_G(T\varP)$, one gets:

\begin{proposition}
\label{prop-identificationLiealgebroidsDerivationsAtiyah}
The Lie algebroid $\der(\algA)$ is isomorphic to the Atiyah Lie algebroid $\Gamma_G(T\varP)$ of the $SL(n, \gC)$-principal fiber bundle.
\end{proposition}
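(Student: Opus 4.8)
The plan is to exhibit an explicit isomorphism of Lie algebroids by matching the two Lie algebroid atlases that have already been constructed. Both $\der(\algA)$ and $\Gamma_G(T\varP)$ are transitive Lie algebroids over $\varM$ with the same base, and each has been described in terms of local trivializations indexed by a common cover $\{U_i\}$ of $\varM$ (the trivializing cover of the $SL(n,\gC)$-principal bundle $\varP$). So the first step is to recall the general fact, mentioned in sub-section~\ref{trivialliealgebroid}, that a transitive Lie algebroid is completely characterized, up to isomorphism, by the cocycle data $(\alpha_{ij}, \chi_{ij})$ attached to a Lie algebroid atlas, subject to the compatibility relations \eqref{eq-changeoftrivializations-compatibilitythreeopensubsets}. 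Two such atlases (over the same cover) which produce \emph{identical} cocycle data define \emph{isomorphic} Lie algebroids, the isomorphism being glued from the local identifications $S_i \circ (S'_i)^{-1}$ on each $U_i$.

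Next I would simply compare the two explicit cocycles. For $\Gamma_G(T\varP)$ with $G = SL(n,\gC)$, equation~\eqref{eq-alphachiatiyah} gives $\alpha_{ij}(\gamma) = g_{ij}\gamma g_{ij}^{-1}$ and $\chi_{ij}(X) = g_{ij}\, \dd g_{ij}^{-1}(X) = g_{ij}(X\cdotaction g_{ij}^{-1})$. For $\der(\algA)$, the atlas built from the local flat connections $\nabla^{0,i}$ induced by $\omega_i = \varphi_i^\ast\pi_2^\ast\theta$ yields, as displayed just before the proposition, exactly $\alpha_{ij}(\gamma) = g_{ij}\gamma g_{ij}^{-1}$ and $\chi_{ij}(X) = g_{ij}(X\cdotaction g_{ij}^{-1})$ — with the \emph{same} transition functions $g_{ij}$, since the trivializations $\Psi_i$ of $\End(\varE)$ and $\varphi_i$ of $\varP$ are induced by one another. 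The only apparent discrepancy is that the structure data of $\der(\algA)$ lives in $M_n(\gC)$ with the constraint $\tr(\gamma_i)=0$, i.e.\ in $\ksl(n,\gC)$, whereas the kernel of $\Gamma_G(T\varP)$ is modeled on $\kg = \lie(SL(n,\gC)) = \ksl(n,\gC)$; so the structure Lie algebras coincide, and the local trivializations $\Psi_i$ of $\algA_{|U_i}$ restrict on the traceless part to local trivializations of the bundle $\varL$ with fiber $\ksl(n,\gC)$. Hence the two cocycles agree verbatim, and the abstract reconstruction gives an isomorphism $\der(\algA)\xrightarrow{\simeq}\Gamma_G(T\varP)$ of transitive Lie algebroids over $\varM$.

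The step requiring the most care is verifying that the locally defined isomorphisms $\phi_i : \der(\algA)_{|U_i}\to\Gamma_G(T\varP)_{|U_i}$, each identifying $X\oplus\gamma_i$ on one side with $X\oplus\gamma_i$ on the other via $S_i$ and the Atiyah-side $S_i$, actually glue: this is precisely the assertion that on $U_{ij}$ the transition rule $\gamma_i = g_{ij}\gamma_j g_{ij}^{-1} + g_{ij}(X\cdotaction g_{ij}^{-1})$ is the \emph{same} equation on both sides, which is what the cocycle comparison establishes. One should also check that the glued map respects the anchors (both anchors are the obvious projection $X\oplus\gamma\mapsto X$, so this is immediate) and the Lie brackets (the local bracket is the trivial-Lie-algebroid bracket \eqref{eq-liebrackettrivialliealgebroid} with $\kg = \ksl(n,\gC)$ in both cases, built from the same $\nabla^{0,i}$-data). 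Since brackets, anchors, and the $C^\infty(\varM)$-module structure are all locally modeled identically, the glued bijection is automatically an isomorphism of Lie algebroids. I do not expect any genuine obstacle here: the real content has already been absorbed into the computation of the two cocycles in \eqref{eq-alphachiatiyah} and in the displayed formulas for $\der(\algA)$, and the proposition is the formal consequence that equal cocycles yield isomorphic algebroids.
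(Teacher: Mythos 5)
Your proposal is correct and is essentially the paper's own argument: the proposition is obtained there precisely by comparing the cocycle data $(\alpha_{ij},\chi_{ij})$ of the two Lie algebroid atlases built from the same trivializations of $\varP$ (with the same transition functions $g_{ij}$ and structure Lie algebra $\ksl(n,\gC)$), and invoking the fact recalled in sub-section~\ref{trivialliealgebroid} that a transitive Lie algebroid is completely characterized by such an atlas. Your additional remarks on gluing, anchors, and brackets just make explicit what the paper leaves implicit.
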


In both cases, the local descriptions of an element of these Lie algebroids is given by $(X, \gamma)$ on $U \subset \varM$. For $\der(\algA)$, $\gamma$ is promoted to an inner derivation $\ad_\gamma$ acting on $\algA_{|U}$ and for $\Gamma_G(T\varP)$, $\gamma$ is promoted to a vertical vector field in $\Gamma_G(T\varP_{|U})$.

Denote by $(\Omega^\grast_\lie(\algA, \algA_0), \hd)$ the differential calculus $(\Omega^\grast(\der(\algA), \algA_0), \hd)$ defined on the transitive Lie algebroid $\der(\algA)$.
Denote by $(\Omega^\grast_\der(\algA), \hdNC)$ the differential calculus defined in Definitions~\ref{def-thegradeddifferentialalgebraNCunderline}.

The natural representation of $\der(\algA)$ on $\algA_0$ defining the differential $\hd$ on the complex $\Omega^\grast_\lie(\algA, \algA_0)$ in Definition~\ref{def-formsvalueskernel} is given in this situation by $(\kX, a) \mapsto \kX \cdotaction a \in \algA_0$. The differential $\hd$ then takes the explicit expression
\begin{multline*}
(\hd \omega)(\kX_1, \dots, \kX_{p+1}) = \sum_{i=1}^{p+1} (-1)^{i+1} \kX_i \cdotaction \omega(\kX_1, \dots \omi{i} \dots, \kX_{p+1})\\
+ \sum_{1 \leq i < j \leq p+1} (-1)^{i+j} \omega([\kX_i, \kX_j], \kX_1, \dots \omi{i} \dots \omi{j} \dots, \kX_{p+1})
\end{multline*}
for any $\omega \in \Omega^p_\lie(\algA, \algA_0)$. A direct comparison with \eqref{eq-differentialNC} and with the definition of $\Omega^\grast_\der(\algA)$ shows the following:

\begin{proposition}
\label{prop-relationsbetweendifferentialcalculi}
The differential calculus $(\Omega^\grast_\lie(\algA, \algA_0), \hd)$ constructed from the Lie algebroid structure is included in the derivation-based differential calculus $(\Omega^\grast_\der(\algA), \hdNC)$ as traceless forms.
\end{proposition}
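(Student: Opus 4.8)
The plan is to compare the two differential calculi term by term, using the algebraic description of $\Omega^\grast_\der(\algA)$ recalled in the appendix and the explicit Koszul formula for $\hd$ on $\Omega^\grast_\lie(\algA, \algA_0)$ displayed just above the statement. First I would recall that, by the definitions referenced (Definition~\ref{def-thegradeddifferentialalgebraNCunderline} and equation~\eqref{eq-differentialNC}), a noncommutative $p$-form in $\Omega^p_\der(\algA)$ is a $C^\infty(\varM) = Z(\algA)$-multilinear antisymmetric map $\der(\algA)^p \to \algA$, with differential $\hdNC$ given precisely by the Koszul-type formula where $\kX_i$ acts on $\algA$-valued arguments as a derivation. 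A $p$-form in $\Omega^p_\lie(\algA, \algA_0)$ is the same kind of map, but constrained to take values in the subspace $\algA_0 \subset \algA$ of traceless elements; the differential $\hd$ is literally the restriction of $\hdNC$, since the two displayed Koszul formulas coincide once one notes that the representation of $\der(\algA)$ on $\algA_0$ used in Definition~\ref{def-formsvalueskernel} is the restriction of the action of derivations on all of $\algA$.

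The key step is therefore to check that $\algA_0$ is preserved by this action, i.e. that for any $\kX \in \der(\algA)$ and any $a \in \algA_0$ one has $\kX \cdotaction a \in \algA_0$, equivalently $\tr(\kX \cdotaction a) = 0$. This follows because the trace $\tr : \algA \to C^\infty(\varM)$ intertwines the action of a derivation $\kX$ with the action of $\rho(\kX) \in \Gamma(T\varM)$ on functions: $\tr(\kX \cdotaction a) = \rho(\kX) \cdotaction \tr(a)$, which is zero when $\tr(a) = 0$. This identity is a pointwise statement in the trivializations $\Psi_i : \algA_{|U_i} \simeq C^\infty(U_i) \otimes M_n(\gC)$, where $\kX = X \oplus \ad_{\gamma_i}$ acts by $X$ on the $C^\infty(U_i)$ factor and by a commutator on the $M_n(\gC)$ factor, and $\tr$ of a commutator vanishes. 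Consequently the subcomplex generated by $\algA_0$-valued forms is closed under $\hdNC$, and the inclusion $\Omega^\grast_\lie(\algA, \algA_0) \hookrightarrow \Omega^\grast_\der(\algA)$ is a morphism of differential complexes onto the subspace of ``traceless forms''.

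Finally I would observe that this inclusion is compatible with the full graded structure: wedge products of traceless-valued forms land back in $\algA$-valued forms (the product is computed in $\algA$, not in $\algA_0$, which is not a subalgebra), and the identification of $\der(\algA)$ with the kernel-plus-quotient data in \eqref{eq-sesderivationsendomorphismalgebra} matches the decomposition $\kX \mapsto (\rho(\kX), \ad\text{-part})$ used in both settings. I expect the main—and essentially only—obstacle to be the bookkeeping of the trace identity $\tr(\kX\cdotaction a) = \rho(\kX)\cdotaction \tr(a)$ globally on $\varM$ rather than just on trivializing opens; but since both sides are $C^\infty(\varM)$-local and agree on each $U_i$ by the commutator computation above, the identity holds on all of $\varM$, and the proposition follows.
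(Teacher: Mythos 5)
Your proposal is correct and follows essentially the same route as the paper, which simply asserts the result as a ``direct comparison'' of the two Koszul formulas once forms in $\Omega^\grast_\lie(\algA,\algA_0)$ are viewed as $\caZ(\algA)=C^\infty(\varM)$-multilinear antisymmetric maps $\der(\algA)^p\to\algA_0\subset\algA$. Your explicit check that $\tr(\kX\cdotaction a)=\rho(\kX)\cdotaction\tr(a)$, so that the derivation action preserves tracelessness (equivalently, that the adjoint representation on $\lieL=\algA_0$ is the restriction of the action on $\algA$), is exactly the point the paper leaves implicit, and it is handled correctly.
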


It has been shown in \cite{Mass14} (see also \cite{Mass30}) that the splittings as $C^\infty(\varM)$-modules of the short exact sequence \eqref{eq-sesderivationsendomorphismalgebra} are in one-to-one correspondence with connections on the vector bundle $\varE$. To any connection $\nabla^\varE$ on $\varE$, it can be associated a unique noncommutative $1$-form $\alpha \in \Omega^1_\der(\algA)$ such that 
\begin{align}
\label{eq-noncommutativeoneformforconnection}
\kX &= \nabla_X - \ad_{\alpha(\kX)}
&
\alpha(\ad_\gamma) &= -\gamma
\end{align}
where $\nabla$ is the connection on the vector bundle $\End(\varE)$ induced by $\nabla^\varE$. This construction leads to the fact that any connection on $\varE$ defines a noncommutative connection on the right $\algA$-module $\modM = \algA$, the one determined by this noncommutative $1$-form $\alpha$ by Proposition~\ref{prop-noncommutativeconnectionsonmodM=algA}. 

In \cite{Mass14}, it is proved that the space of noncommutative connections on the right $\algA$-module $\modM = \algA$ contains the space of connections on $\varE$ and that this inclusion is compatible with the notions of curvature and gauge transformations.

This result must be compared with similar properties carried by connections on transitive Lie algebroids. The relation \eqref{eq-noncommutativeoneformforconnection} is the same as the expression \eqref{eq-decompositionwithconnection} which defines a similar $1$-form on a transitive Lie algebroid. In fact, they both rewrite the splitting given by $\nabla$ of the corresponding short exact sequence in terms of algebraic objects. With the identifications of the spaces of forms, it is clear that the obtained $1$-forms in these two situations are the same: $\alpha \in \Omega^1_\lie(\algA, \algA_0) \subset \Omega^1_\der(\algA)$.

\medskip
Locally, any section $s$ of the vector bundle $\varE$ is trivialized as $s_i : U_i \rightarrow \gC^n$. The gluing relations between these trivializations are given by $s_i = g_{ij} s_j$. For any $\kX \in \der(\algA)$ written locally as $\kX_i = X \oplus \gamma_i$, with $\tr(\gamma_i) = 0$, and for any $s \in \Gamma(\varE)$, a straightforward computation shows that
\begin{equation*}
X \cdotaction s_i + \gamma_i s_i = g_{ij}( X \cdotaction s_j + \gamma_j s_j )
\end{equation*}
which means that there is a well-defined representation $\phi : \der(\algA) \rightarrow \kD(\varE)$. For this representation, $\phi_{\algA_0} : \algA_0 \rightarrow \algA(\varE) = \algA$ is just the inclusion. Let us denote by $(\Omega^\grast_\lie(\algA), \hd_\varE)$ the graded differential algebra $(\Omega^\grast(\der(\algA), \algA(\varE)), \hd_\varE)$. Then a direct comparison between the spaces and the differentials shows the following:

\begin{proposition}
\label{prop-identificationdifferentialcalculirepresentation}
$(\Omega^\grast_\lie(\algA), \hd_\varE)$ and $(\Omega^\grast_\der(\algA), \hdNC)$ are isomorphic as graded differential algebras.
\end{proposition}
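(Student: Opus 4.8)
The plan is to exhibit an explicit isomorphism between the two graded differential algebras and then check that it intertwines the products and the differentials. The starting point is the representation $\phi : \der(\algA) \rightarrow \kD(\varE)$ constructed just above the statement, for which the induced map $\phi_{\algA_0} : \algA_0 \rightarrow \algA(\varE) = \algA$ is simply the inclusion of traceless elements; more relevantly, the associated representation $\widetilde{\phi}$ of $\der(\algA)$ on $\End(\varE)$ is $\widetilde{\phi}(\kX)\cdotaction a = [\phi(\kX), a]$, which on the algebra $\algA = \algA(\varE)$ of sections of $\End(\varE)$ coincides with the natural action $\kX \cdotaction a$ of the derivation $\kX$ on $a$ (since $\kX \in \der(\algA)$ acts as a derivation and $\phi(\kX)$ is the first-order operator implementing it). Thus $\Omega^\grast_\lie(\algA) = \Omega^\grast(\der(\algA), \algA(\varE))$ by definition consists of $C^\infty(\varM)$-multilinear antisymmetric maps from $\der(\algA)^\grast$ to $\algA$, with the Koszul differential $\hd_\varE$ built from the action $\kX \cdotaction a$ and the Lie bracket on $\der(\algA)$.

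First I would recall, from Appendix~\ref{briefreviewofsomenoncommutativestructures} and Definition~\ref{def-thegradeddifferentialalgebraNCunderline}, that $\Omega^\grast_\der(\algA)$ is realized as the space of $Z(\algA) = C^\infty(\varM)$-multilinear antisymmetric maps from $\der(\algA)^\grast$ into $\algA$, equipped with the product
\[
(\omega \omega')(\kX_1,\dots,\kX_{p+q}) = \frac{1}{p!\,q!}\sum_{\sigma} (-1)^{\sign\sigma}\,\omega(\kX_{\sigma(1)},\dots,\kX_{\sigma(p)})\,\omega'(\kX_{\sigma(p+1)},\dots,\kX_{\sigma(p+q)})
\]
and the differential $\hdNC$ given by the very Koszul formula appearing in \eqref{eq-differentialNC}, with $\kX \cdotaction a$ the action of a derivation on an element of $\algA$. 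Since both $\Omega^\grast_\lie(\algA)$ and $\Omega^\grast_\der(\algA)$ are, \emph{as graded vector spaces}, literally the same space of $C^\infty(\varM)$-multilinear antisymmetric maps $\der(\algA)^\grast \to \algA$, the natural candidate for the isomorphism is the identity map. It then remains to verify two things: that the wedge product on $\Omega^\grast_\lie(\algA)$ coming from the associative product on $\algA$ agrees termwise with the product above on $\Omega^\grast_\der(\algA)$ — this is immediate, both being the standard skew-symmetrized multiplication of $\algA$-valued cochains — and that $\hd_\varE = \hdNC$, which follows by inspecting the two Koszul formulas: both sums run over $\der(\algA)$, both use the same Lie bracket on $\der(\algA)$, and the "action" terms coincide precisely because $\widetilde\phi(\kX)\cdotaction a = \kX\cdotaction a$ as observed above.

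The only genuine point requiring care — and the one I would single out as the main obstacle — is the identification of the underlying \emph{spaces}: one must check that the module of forms $\Omega^\grast(\der(\algA),\algA(\varE))$, defined via $C^\infty(\varM)$-multilinearity in Definition~\ref{def-formsvalueskernelrepresentation}, really coincides on the nose with $\Omega^\grast_\der(\algA)$ as defined in the noncommutative setting (Definition~\ref{def-thegradeddifferentialalgebraNCunderline}), i.e.\ that there is no discrepancy between "$C^\infty(\varM)$-multilinear" and "$Z(\algA)$-multilinear," and no ambiguity in the degree-$0$ piece ($\Omega^0 = \algA$ in both cases). Once one notes that the center $Z(\algA)$ of the endomorphism algebra of an $SL(n,\gC)$-vector bundle is exactly $C^\infty(\varM)$ — this is part of the structure recalled around \eqref{eq-sesderivationsendomorphismalgebra} — the two notions of multilinearity agree, the identity map is a well-defined bijection in each degree, and the verification that it is a morphism of graded differential algebras is the routine comparison of formulas sketched above. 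This completes the proof.
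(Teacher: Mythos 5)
Your proposal is correct and follows essentially the same route as the paper, which simply states that the result follows from ``a direct comparison between the spaces and the differentials'': you spell out exactly that comparison, namely that $\caZ(\algA)=C^\infty(\varM)$ makes the two spaces of multilinear antisymmetric maps $\der(\algA)^\grast\to\algA$ coincide, that the products agree, and that $\widetilde{\phi}(\kX)\cdotaction a=[\phi(\kX),a]=\kX\cdotaction a$ makes the two Koszul differentials identical. No gap to report.
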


As a direct application of this result, one has the following:

\begin{theorem}
\label{thm-derAconnectionsonEandnoncommutativeconnections}
The following three spaces are isomorphic:
\begin{enumerate}
\item The space of $\der(\algA)$-connections on $\varE$ (Lie algebroid of derivations).
\item The space of $\Gamma_G(T\varP)$-connections on $\varE$ (Atiyah Lie algebroid of the $SL(n, \gC)$-principal bundle).
\item The space of noncommutative connections on the right $\algA$-module $\modM = \algA$ (noncommutative differential structure).
\end{enumerate}
These isomorphisms are compatible with curvatures and gauge transformations.
\end{theorem}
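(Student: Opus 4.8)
The plan is to prove the three-way isomorphism by first establishing the equivalence between (1) and (3) directly from the comparison of differential calculi, then deducing (2) by transport along the Lie algebroid isomorphism of Proposition~\ref{prop-identificationLiealgebroidsDerivationsAtiyah}. First I would invoke Proposition~\ref{prop-identificationdifferentialcalculirepresentation}, which identifies $(\Omega^\grast_\lie(\algA), \hd_\varE)$ with $(\Omega^\grast_\der(\algA), \hdNC)$ as graded differential algebras. Since by Proposition~\ref{prop-connectiononeformcurvaturetwoforms} a $\der(\algA)$-connection on $\varE$ is completely determined by its connection $1$-form $\omega^\varE \in \Omega^1_\lie(\algA)$, and since (by the review recalled just before the theorem, following \cite{Mass14,Mass30}) a noncommutative connection on the right $\algA$-module $\modM = \algA$ is likewise determined by a noncommutative $1$-form in $\Omega^1_\der(\algA)$ via Proposition~\ref{prop-noncommutativeconnectionsonmodM=algA}, the identification of these two spaces of $1$-forms yields a bijection between the spaces in (1) and (3). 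One must check that this bijection intertwines the two affine structures: both spaces of connections are affine over the respective spaces of $1$-forms, which are identified, so the map is an affine isomorphism.

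Next I would verify compatibility with curvatures. For a $\der(\algA)$-connection, Proposition~\ref{prop-connectiononeformcurvaturetwoforms} gives $\hR^\varE = \hd_\varE \omega^\varE + [\omega^\varE, \omega^\varE]$ in $\Omega^2_\lie(\algA)$; on the noncommutative side the curvature of a noncommutative connection has the same Maurer--Cartan-type expression in $\Omega^2_\der(\algA)$ (this is standard in derivation-based noncommutative geometry, cf.\ Appendix~\ref{briefreviewofsomenoncommutativestructures}). Since Proposition~\ref{prop-identificationdifferentialcalculirepresentation} is an isomorphism of \emph{differential graded algebras}, it carries $\hd_\varE$ to $\hdNC$ and the bracket to the bracket, hence carries one curvature formula to the other. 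For gauge transformations: the gauge group $\Aut(\varE)$ is the group of invertible elements of $\algA = \algA(\varE)$ in both pictures, and the action on connection $1$-forms is $\omega \mapsto g^{-1}\omega g + g^{-1}\hd_\varE g$ by Proposition~\ref{prop-gaugegroupactiononlieAconnectionsonvarE}, which matches the gauge action in noncommutative geometry because the isomorphism of Proposition~\ref{prop-identificationdifferentialcalculirepresentation} is multiplicative and intertwines the differentials; one simply transports the formula.

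Finally, to bring in (2), I would use the isomorphism of transitive Lie algebroids $\Phi : \der(\algA) \xrightarrow{\simeq} \Gamma_G(T\varP)$ from Proposition~\ref{prop-identificationLiealgebroidsDerivationsAtiyah}, which respects anchors, kernels ($\algA_0 \simeq \Gamma_G(\varP,\kg)$ for $G = SL(n,\gC)$), and is compatible with the representations of both algebroids on the common vector bundle $\varE$ (since in the local description recalled after Proposition~\ref{prop-identificationLiealgebroidsDerivationsAtiyah} the element $\gamma$ acts on sections of $\varE$ in the same way whether read as an inner derivation or as a vertical vector field). A $\lieA$-connection on $\varE$ being a linear map $\lieA \to \kD(\varE)$ compatible with anchors (Definition~\ref{def-AconnectionsonE}), precomposition with $\Phi^{-1}$ gives a bijection between $\der(\algA)$-connections on $\varE$ and $\Gamma_G(T\varP)$-connections on $\varE$; functoriality of curvature (which is built from the bracket and the map to $\kD(\varE)$) and of the gauge action (same group $\Aut(\varE)$, same module $\varE$) shows this bijection preserves both structures. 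Composing with the first isomorphism yields the full statement.

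\textbf{Main obstacle.} The routine verifications (affineness, multiplicativity) are not the difficulty; the crux is checking that the three identifications are \emph{mutually compatible}, i.e.\ that the Lie algebroid isomorphism $\Phi$ of Proposition~\ref{prop-identificationLiealgebroidsDerivationsAtiyah} is compatible not merely abstractly but with the specific representations on $\varE$ used to define $\der(\algA)$-connections and $\Gamma_G(T\varP)$-connections, so that the diagram of all three bijections commutes. This requires tracing through the local trivializations: one must confirm that the transition data $(\alpha_{ij}, \chi_{ij})$ agree (they do, by the explicit formulas $\alpha_{ij}(\gamma) = g_{ij}\gamma g_{ij}^{-1}$, $\chi_{ij}(X) = g_{ij}(X\cdotaction g_{ij}^{-1})$ common to both) and that the induced action of a local element $X \oplus \gamma$ on a local section $s_i$ of $\varE$ is $X\cdotaction s_i + \gamma s_i$ in both pictures, with matching gluing $s_i = g_{ij}s_j$ — the computation already displayed before the theorem. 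Once this local matching is in place, everything else follows formally from the propositions cited above.
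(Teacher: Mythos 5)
Your proposal is correct and follows essentially the same route as the paper: it assembles the theorem from Propositions~\ref{prop-identificationLiealgebroidsDerivationsAtiyah}, \ref{prop-connectiononeformcurvaturetwoforms}, \ref{prop-identificationdifferentialcalculirepresentation}, \ref{prop-gaugegroupactiononlieAconnectionsonvarE} and \ref{prop-noncommutativeconnectionsonmodM=algA}, identifies the gauge groups as the invertible elements of $\algA$, and handles the equivalence of (1) and (2) by the compatibility of the two representations on $\varE$, checked via the local trivialization data — exactly the ingredients (in a slightly different order) of the paper's proof. The only cosmetic point is the curvature formula, which should read $\hR^\varE(\kX,\kY) = (\hd_\varE \omega^\varE)(\kX,\kY) + [\omega^\varE(\kX), \omega^\varE(\kY)]$ as in Proposition~\ref{prop-connectiononeformcurvaturetwoforms}, rather than the unnormalized bracket of forms.
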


\begin{proof}
The isomorphism between 1 and 2 is a trivial consequence of Proposition~\ref{prop-identificationLiealgebroidsDerivationsAtiyah} and the fact that the representations of these two Lie algebroids on $\varE$ are the same as it can be easily checked.

Moreover, a $\der(\algA)$-connection on $\varE$ is completely determined by its associated $1$-form $\omega^\varE \in \Omega^1_\lie(\algA)$ (Proposition~\ref{prop-connectiononeformcurvaturetwoforms}). As a $1$-form in $\Omega^1_\der(\algA)$ (Proposition~\ref{prop-identificationdifferentialcalculirepresentation}), $\omega^\varE$ determines an unique noncommutative connection (Proposition~\ref{prop-noncommutativeconnectionsonmodM=algA}).
As already noticed, the gauge group $\Aut(\varE)$ is the space of invertible elements in $\algA(\varE) = \algA$, so that it coincides with the gauge group of the $\algA$-module $\modM = \algA$ (Proposition~\ref{prop-noncommutativeconnectionsonmodM=algA}). 

The identification between connections is compatible with the various expressions for curvatures and gauge transformations (Propositions~\ref{prop-connectiononeformcurvaturetwoforms}, \ref{prop-gaugegroupactiononlieAconnectionsonvarE} and \ref{prop-noncommutativeconnectionsonmodM=algA}).
\end{proof}

Let us say that a noncommutative connection on the right $\algA$-module $\modM = \algA$ is traceless if its noncommutative $1$-form is traceless. Notice that ordinary connections on $\varE$ give rise to traceless noncommutative connections. Then one has:

\begin{theorem}
\label{thm-GeneralizedconnectionsandtracelessNCconnections}
The following three spaces are isomorphic:
\begin{enumerate}
\item The space of generalized connection $1$-forms on $\der(\algA)$.
\item The space of generalized connection $1$-forms on $\Gamma_G(T\varP)$.
\item The space of traceless noncommutative connections on the right $\algA$-module $\modM = \algA$
\end{enumerate}
The isomorphisms are compatible with curvatures and gauge transformations.
\end{theorem}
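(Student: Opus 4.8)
The plan is to chain together the two identifications already at our disposal — the Lie algebroid isomorphism of Proposition~\ref{prop-identificationLiealgebroidsDerivationsAtiyah} and the inclusion of differential calculi of Proposition~\ref{prop-relationsbetweendifferentialcalculi} — and then to track the curvature and the infinitesimal gauge action along each arrow. The argument runs in complete parallel with the proof of Theorem~\ref{thm-derAconnectionsonEandnoncommutativeconnections}; the only genuinely new feature is that the kernel $\algA_0$ of $\der(\algA)$ accounts precisely for the ``traceless'' qualifier on the noncommutative side.

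For spaces 1 and 2, I would invoke Proposition~\ref{prop-identificationLiealgebroidsDerivationsAtiyah}: the isomorphism of transitive Lie algebroids $\der(\algA) \xrightarrow{\simeq} \Gamma_G(T\varP)$, being compatible with the anchors, restricts to an isomorphism of the kernels $\algA_0 \xrightarrow{\simeq} \Gamma_G(\varP, \kg)$ and intertwines the two adjoint representations; hence it induces an isomorphism of the graded differential Lie algebras $\Omega^\grast_\lie(\algA, \algA_0) \simeq \Omega^\grast_\lie(\varP, \kg)$. A generalized connection $1$-form (Definition~\ref{def-generalizedconnections}) is merely an element of $\Omega^1$ of this complex, and both its curvature $\hd\omega + \frac{1}{2}[\omega,\omega]$ and the infinitesimal gauge action $\omega \mapsto \omega + (\hd\xi + [\omega,\xi]) + O(\xi^2)$ are written solely through the differential and the graded bracket, so they are carried to their counterparts. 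This gives the isomorphism between 1 and 2 together with the asserted compatibilities. (No condition on representations on $\varE$ is needed here, since generalized connection $1$-forms live purely on $\lieA$ with values in $\lieL$.)

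For spaces 1 and 3, I would use Proposition~\ref{prop-relationsbetweendifferentialcalculi} to recognize $\Omega^1_\lie(\algA, \algA_0)$ as exactly the subspace of traceless $1$-forms inside $\Omega^1_\der(\algA)$, on which $\hd$ agrees with $\hdNC$ and the bracket of $\algA_0$ is the commutator in $\algA$; then Proposition~\ref{prop-noncommutativeconnectionsonmodM=algA} identifies noncommutative connections on the right module $\modM = \algA$ with the noncommutative $1$-forms $\alpha \in \Omega^1_\der(\algA)$, compatibly with curvature and gauge transformations, and restricting this bijection to the traceless forms yields the desired bijection with traceless noncommutative connections. Under it the noncommutative curvature $\hdNC\alpha + \alpha^2$ reads $\hd\alpha + \frac{1}{2}[\alpha,\alpha]$, the curvature of the generalized connection, and the infinitesimal $SL(n,\gC)$-gauge action $\alpha \mapsto \alpha + (\hdNC\xi + [\alpha,\xi]) + O(\xi^2)$ with $\xi \in \algA_0$ matches the gauge action on generalized connection $1$-forms. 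Composing with the first identification produces the three-way isomorphism with all its compatibilities.

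I do not expect a serious obstacle: every step reduces to a result already established, and the only point requiring care is the stability of the ``traceless'' condition under the differential, the graded bracket, curvature, and the infinitesimal gauge action. Stability under $\hd = \hdNC$ is part of Proposition~\ref{prop-relationsbetweendifferentialcalculi}; stability under the bracket and under the curvature follows because the relevant expressions are built from commutators in $\algA$, whose trace vanishes; and stability under the infinitesimal gauge action is then immediate from the two previous facts together with $\xi \in \algA_0$.
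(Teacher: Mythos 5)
Your proposal is correct and follows essentially the same route as the paper: the paper's proof simply invokes the method of Theorem~\ref{thm-derAconnectionsonEandnoncommutativeconnections} (the Lie algebroid isomorphism of Proposition~\ref{prop-identificationLiealgebroidsDerivationsAtiyah} for the equivalence of 1 and 2, and the form correspondence plus Proposition~\ref{prop-noncommutativeconnectionsonmodM=algA} for the equivalence with 3), with the only change being the identification of $\Omega^\grast_\lie(\algA, \algA_0)$ as traceless noncommutative forms via Proposition~\ref{prop-relationsbetweendifferentialcalculi}, which is exactly what you do. Your extra remarks on the stability of tracelessness under the differential, bracket, curvature and infinitesimal gauge action are a sound (and welcome) elaboration of what the paper leaves implicit.
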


\begin{proof}
The proof relies on the same method used in Theorem~\ref{thm-derAconnectionsonEandnoncommutativeconnections}, except that now we identify the space $\Omega^\grast_\lie(\algA, \algA_0)$ as traceless noncommutative forms using Proposition~\ref{prop-relationsbetweendifferentialcalculi}.
\end{proof}

\smallskip
Proposition~\ref{prop-identificationdifferentialcalculusAtiyah} shows that $(\Omega^\grast_\lie(\varP, \kg), \hd)$ can be identified with the basic sub-complex in a trivial differential calculus for a convenient Cartan operation. It has been shown in \cite{Mass15} that the graded differential algebra $(\Omega^\grast_\der(\algA), \hdNC)$ is also characterized by such an inclusion.

In order to see that, one has to introduce a bigger algebra $\algB = C^\infty(\varP) \otimes M_n$, which is the algebra of matrix valued functions on $\varP$. Then the algebra $\algA$ identifies with
\begin{equation*}
\algA \simeq \{ a : \varP \rightarrow M_n \, / \, a(p\cdotaction g) = g^{-1} a(p) g \text{ for all } g \in SL(n, \gC) \} \subset \algB
\end{equation*}
The Lie algebra of derivations of $\algB$ has the following trivial structure: 
\begin{equation*}
\der(\algB) = \Gamma(T\varP) \oplus (C^\infty(\varP) \otimes \ksl_n)
\end{equation*}
where $\kX = \sfX \oplus \gamma$ acts on $b \in \algB$ as $\kX \cdotaction b = \sfX \cdotaction b + [\gamma, b]$. Its derivation-based differential calculus is then the tensor product $\Omega^\grast_\der(\algB) = \Omega^\grast(\varP) \otimes \Omega^\grast_\der(M_n)$ with the differential $\hd_\algB = \dd + \dd'$. The differential calculus $(\Omega^\grast(\varP), \dd)$ is the de~Rham differential calculus on the manifold $\varP$, and $(\Omega^\grast_\der(M_n), \dd')$ is the noncommutative derivation-based differential calculus on the matrix algebra $M_n$. This differential calculus is described in Proposition~\ref{prop-differentialcalculusmatrixalgebra}.

As in \ref{ex-atiyahliealgebroid}, let us define $\kg_\equ = \{ \xi^v + \ad_\xi \ / \ \xi \in \ksl_n \}$. 
\begin{proposition}[\cite{Mass15}]
$(\Omega^\grast_\der(\algA), \hdNC)$ is the graded differential algebra of basic elements for the Cartan operation of $\kg_\equ$ on $(\Omega^\grast_\der(\algB), \hd_\algB)$.
\end{proposition}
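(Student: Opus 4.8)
The plan is to obtain the statement as a consequence of the identifications already established in subsections~\ref{trivialliealgebroid}--\ref{ex-derivationsendomorphismalgebraliealgebroid}, by recognising that every object appearing here is an instance of a structure already studied. The first and only slightly substantial point is that $\der(\algB)$ \emph{is} a trivial Lie algebroid: computing the commutator of two derivations $\sfX \oplus \gamma$, $\sfY \oplus \eta$ of $\algB$, which act by $\kX \cdotaction b = \sfX \cdotaction b + [\gamma, b]$, reproduces exactly the bracket \eqref{eq-liebrackettrivialliealgebroid}, so $\der(\algB) = \tla(\varP, \ksl_n)$ with anchor the restriction to the centre $C^\infty(\varP)$. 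Moreover $\algB = C^\infty(\varP) \otimes M_n$ is the algebra of sections of $\End$ of the trivial bundle $\varP \times \gC^n$ over $\varP$ carrying the defining representation $\ell$ of $SL(n,\gC)$, and under the tensor decomposition $\Omega^\grast_\der(\algB) = \Omega^\grast(\varP) \otimes \Omega^\grast_\der(M_n)$ this gives an isomorphism of graded differential algebras $(\Omega^\grast_\der(\algB), \hd_\algB) \cong (\Omega^\grast_\tla(\varP, \ksl_n, \gC^n), \hd_\evE)$ of the type described in subsection~\ref{trivialliealgebroid}. Under this identification the derivation $\xi^v + \ad_\xi$ becomes $\xi^\varP \oplus \xi$, so the Cartan operation of $\kg_\equ$ appearing in the statement coincides with the one used in subsection~\ref{ex-atiyahliealgebroid} for $G = SL(n,\gC)$.

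With this in hand I would simply chain the results already proved. Applying Proposition~\ref{prop-identificationdifferentialcalculusrepresentation} to the data $G = SL(n,\gC)$, $\evE = \gC^n$, $\ell$ the defining representation — for which $\varE = \varP \times_\ell \gC^n$ is precisely the $SL(n,\gC)$-vector bundle of the statement and $\algA(\varE) = \algA$ — yields an isomorphism of graded differential algebras between the basic subcomplex $(\Omega^\grast_\der(\algB))_{\kg_\equ} \cong \Omega^\grast_\tla(\varP, \ksl_n, \gC^n)_{\kg_\equ}$ and $(\Omega^\grast(\Gamma_G(T\varP), \algA(\varE)), \hd_\varE)$. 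Composing with the isomorphism $\Gamma_G(T\varP) \cong \der(\algA)$ of Proposition~\ref{prop-identificationLiealgebroidsDerivationsAtiyah} — which intertwines the representations of the two Lie algebroids on $\varE$, as already checked in the proof of Theorem~\ref{thm-derAconnectionsonEandnoncommutativeconnections} — one gets $(\Omega^\grast(\der(\algA), \algA(\varE)), \hd_\varE) = (\Omega^\grast_\lie(\algA), \hd_\varE)$, and Proposition~\ref{prop-identificationdifferentialcalculirepresentation} finally identifies this with $(\Omega^\grast_\der(\algA), \hdNC)$. The composite of these graded-differential-algebra isomorphisms is exactly the asserted identification of $(\Omega^\grast_\der(\algA), \hdNC)$ with the basic subcomplex of $(\Omega^\grast_\der(\algB), \hd_\algB)$ for the Cartan operation of $\kg_\equ$.

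Alternatively, and closer to the cited reference, one can repeat the argument of the proof of Proposition~\ref{prop-identificationdifferentialcalculusAtiyah} verbatim in this setting: $\algA \subset \algB$ is the subalgebra of conjugation-equivariant $M_n$-valued functions on $\varP$; the derivations of $\algB$ preserving $\algA$ form a Lie algebroid over $\varM$ containing the $C^\infty(\varP)$-ideal generated by $\kg_\equ$, with quotient $\der(\algA)$; the computation $\xi^v \cdotaction a = -[\xi, a] = -\ad_\xi(a)$ for $a \in \algA$ shows that $\kg_\equ$, hence this ideal, annihilates $\algA$, so a $\kg_\equ$-horizontal form descends to a form on $\der(\algA)$ and $\kg_\equ$-invariance forces its values into $\algA$; injectivity and surjectivity of the resulting map follow from the analogue of Corollary~\ref{cor-rightinvariantvectorfieldsandNgeneratestrivialLiealgebroidAtiyah}, namely that the $\algA$-preserving derivations generate $\der(\algB)$ over $C^\infty(\varP)$, obtained from a local family of generators as in Proposition~\ref{prop-localfamilyofgeneratorsAtiyah} transported along $\der(\algA) \cong \Gamma_G(T\varP)$; and $\hd_\algB = \dd + \dd'$ reduces to $\hdNC$ on basic forms evaluated on $\algA$-preserving derivations. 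In either route there is no genuine obstacle: the only thing requiring more than a routine check is the matching of module structures together with the identification $\der(\algB) = \tla(\varP, \ksl_n)$ and of the two descriptions of $\kg_\equ$ in the first route, or the generation statement over $C^\infty(\varP)$ in the second — and both are immediate once Proposition~\ref{prop-identificationLiealgebroidsDerivationsAtiyah} is invoked.
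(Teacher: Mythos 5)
Your argument is correct, but it does not parallel anything in the paper itself: the paper states this proposition as an imported result, citing \cite{Mass15}, and gives no proof — in fact it runs the logic the other way, using this proposition together with the identification $\Omega^\grast_\tla(\varP,\kg,\evE)\simeq\Omega^\grast_\der(\algB)$ to give a second derivation of Proposition~\ref{prop-identificationdifferentialcalculirepresentation}. Your first route reverses that arrow: you chain $\Omega^\grast_\der(\algB)\simeq\Omega^\grast_\tla(\varP,\ksl_n,\gC^n)$ (with $\kg_\equ$ matching $\kg_\equ$ under $\der(\algB)=\tla(\varP,\ksl_n)$), then Proposition~\ref{prop-identificationdifferentialcalculusrepresentation} for the basic subalgebra, then Propositions~\ref{prop-identificationLiealgebroidsDerivationsAtiyah} and \ref{prop-identificationdifferentialcalculirepresentation}. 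This is legitimate and non-circular, because the paper proves Proposition~\ref{prop-identificationdifferentialcalculirepresentation} by a direct comparison of spaces and differentials (indeed $\Omega^\grast_\lie(\algA)$ and $\Omega^\grast_\der(\algA)$ are literally the same $C^\infty(\varM)$-multilinear complex), and Proposition~\ref{prop-identificationdifferentialcalculusrepresentation} is obtained by adapting Proposition~\ref{prop-identificationdifferentialcalculusAtiyah}, with no appeal to \cite{Mass15}. What your route buys is that the noncommutative statement becomes a corollary of the Lie-algebroid results already established in the paper, at the cost of resting on Proposition~\ref{prop-identificationdifferentialcalculusrepresentation}, whose proof the paper itself only sketches; your second, direct route (horizontality kills the $C^\infty(\varP)$-span of $\kg_\equ$ since $(\xi^v+\ad_\xi)\cdotaction a=0$ on $\algA$, invariance forces equivariance of values, plus a generation argument à la Corollary~\ref{cor-rightinvariantvectorfieldsandNgeneratestrivialLiealgebroidAtiyah}) is essentially the argument of the cited reference and would be the self-contained option. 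One small point worth keeping in mind in the first route: to assert that the composite isomorphism \emph{is} the natural identification of $\Omega^\grast_\der(\algA)$ with the basic subalgebra (evaluation of a basic form on lifts of derivations of $\algA$), one should unwind the map $\lambda$ of Proposition~\ref{prop-identificationdifferentialcalculusAtiyah}, which indeed acts by evaluation on lifts, so the identification is the expected one.
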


Now, with $\evE = \gC^n$, one has $\caL(\evE) = M_n$, so that it is straightforward to check the following:
\begin{proposition}
The differential calculus $(\Omega^\grast_\tla(\varP,\kg,\evE), \hd_\evE)$ is isomorphic to the differential calculus $(\Omega^\grast_\der(\algB), \hd_\algB)$.
\end{proposition}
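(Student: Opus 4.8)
The plan is to identify both graded differential algebras with one and the same model --- namely $\Omega^\grast(\varP) \otimes \exter^\grast \ksl_n^\ast \otimes M_n$, equipped with the sum of the de~Rham differential on $\varP$ and the Chevalley--Eilenberg differential of $\ksl_n$ with values in $M_n$ for the adjoint action --- and then to check that the algebra structures match as well.

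First I would unfold $(\Omega^\grast_\der(\algB), \hd_\algB)$. One is given $\algB = C^\infty(\varP) \otimes M_n$, $\der(\algB) = \Gamma(T\varP) \oplus (C^\infty(\varP) \otimes \ksl_n)$ with $\sfX \oplus \gamma$ acting on $b$ by $\sfX \cdotaction b + [\gamma, b]$, center $Z(\algB) = C^\infty(\varP)$, and $\Omega^\grast_\der(\algB) = \Omega^\grast(\varP) \otimes \Omega^\grast_\der(M_n)$ with $\hd_\algB = \dd + \dd'$. Since all derivations of $M_n$ are inner, Proposition~\ref{prop-differentialcalculusmatrixalgebra} identifies $\der(M_n) \cong \ksl_n$ through $v \mapsto \ad_v$ and gives $\Omega^\grast_\der(M_n) = \exter^\grast \ksl_n^\ast \otimes M_n$ with $\dd'$ the Chevalley--Eilenberg differential of $\ksl_n$ acting on $M_n$ by $\ad$. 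Hence $\Omega^\grast_\der(\algB) = \Omega^\grast(\varP) \otimes \exter^\grast \ksl_n^\ast \otimes M_n$, the product being the tensor product of the wedge on $\Omega^\grast(\varP)$, the wedge on $\exter^\grast \ksl_n^\ast$, and the matrix product on $M_n$, with the differential announced above.

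Next I would unfold $(\Omega^\grast_\tla(\varP,\kg,\evE), \hd_\evE)$, taking $\evE = \gC^n$, $\kg = \ksl_n$ the Lie algebra of $SL(n, \gC)$, and $\eta : \ksl_n \to \End(\gC^n) = M_n$ the defining representation, that is, the inclusion $\eta(v) = v$. By the description recalled in sub-section~\ref{trivialliealgebroid} (applied with base manifold $\varP$), $\Omega^\grast_\tla(\varP,\ksl_n,\gC^n) = \Omega^\grast(\varP) \otimes \exter^\grast \ksl_n^\ast \otimes \caL(\gC^n) = \Omega^\grast(\varP) \otimes \exter^\grast \ksl_n^\ast \otimes M_n$ as a bigraded algebra, with $\hd_\evE = \dd + \ds_\evE$, where $\dd$ is the de~Rham differential and $\ds_\evE$ is the Chevalley--Eilenberg differential of $\ksl_n$ valued in $\caL(\gC^n) = M_n$ for the induced representation $\widetilde{\eta}$. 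Since $\widetilde{\eta}(v)\cdotaction a = [\eta(v), a] = [v, a] = \ad_v(a)$, the differential $\ds_\evE$ is exactly the Chevalley--Eilenberg differential for the adjoint action. Comparing with the previous paragraph, the graded algebras coincide and the differentials coincide term by term, which establishes the desired isomorphism. Equivalently, one can observe that $\der(\algB)$ and the trivial Lie algebroid $\tla(\varP,\ksl_n)$ are literally the same Lie algebroid over $\varP$ --- both are $\Gamma(T\varP) \oplus (C^\infty(\varP)\otimes\ksl_n)$ with anchor the projection onto $\Gamma(T\varP)$ and bracket \eqref{eq-liebrackettrivialliealgebroid}, this bracket reproducing the commutator of derivations of $\algB$ via the Jacobi identity --- and that $\algB$ is the algebra $\algA(\varE)$ of the trivial bundle $\varE = \varP \times \gC^n$, with the natural action of $\der(\algB)$ on it matching the induced representation $\widetilde{\eta}$, so that the two calculi are built from identical data through the same Koszul formula.

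I expect no genuine obstacle: the single point needing care is to recognize, on both sides, the representation of $\ksl_n$ on $M_n$ as the adjoint representation --- which rests on the defining representation $\ksl_n \hookrightarrow M_n$ being the inclusion (so $\widetilde{\eta}(v) = [v,\cdot]$) and on all derivations of $M_n$ being inner --- after which the two Chevalley--Eilenberg differentials, and hence the whole comparison, are literally the same, making the verification straightforward.
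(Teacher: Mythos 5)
Your proposal is correct and follows exactly the route the paper intends: the paper simply remarks that with $\evE = \gC^n$ one has $\caL(\evE) = M_n$ and declares the check straightforward, the implicit verification being precisely your identification of both sides with $\Omega^\grast(\varP) \otimes \exter^\grast \ksl_n^\ast \otimes M_n$ carrying $\dd$ plus the Chevalley--Eilenberg differential for the adjoint action (via Proposition~\ref{prop-differentialcalculusmatrixalgebra} on one side and the induced representation $\widetilde{\eta}(v) = [v,\cdot]$ on the other). Your closing observation that $\der(\algB)$ and $\tla(\varP,\ksl_n)$ coincide as Lie algebroids over $\varP$, with $\algB = \algA(\varP \times \gC^n)$, is a clean way to package the same argument.
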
 

Obviously, Proposition~\ref{prop-identificationdifferentialcalculirepresentation} can be proved again with these two last results.

\smallskip
For this particular transitive Lie algebroid, besides the notion of infinitesimal gauge transformations as defined in Definition~\ref{def-infinitesimalgaugetransformations}, one has the notion of gauge transformations when a $\algA$-module is given. These gauge transformations are defined using the noncommutative framework, and they exactly coincide with the usual notion of gauge transformations on the associated principal fiber bundle $\varP$ for the right $\algA$-module $\algA$ itself. However, as for gauge transformations on Lie algebroids, notice that this definition requires a representation of the algebra implementing the underlying symmetry.

\section{Conclusions}

In this paper we have exhibited relations between ordinary connections on principal fiber bundles, connections and their generalizations on transitive Lie algebroids ($\lieA$-connections, generalized connection $1$-forms) and connections in the framework of the derivation-based noncommutative geometry. The Atiyah transitive Lie algebroid appears as the common corner stone on which all these notions of connections turn out to be explicitly formulated and compared.

In particular, we have shown that the space of generalized connections on the Atiyah transitive Lie algebroid of a $SL(n, \gC)$-principal fiber bundle is isomorphic to the space of noncommutative connections on the derivation-based noncommutative geometry of the algebra of endomorphisms of a $\gC^n$-vector bundle associated to this principal fiber bundle. In this correspondence, gauge transformations are compatible.

\medskip
\noindent\textbf{Acknowledgments :}
We thank the referee for suggestions about improvement on the paper and references.

\renewcommand{\thesection}{\Alph{section}}
\setcounter{section}{0}

\section{Brief review of some noncommutative structures}
\label{briefreviewofsomenoncommutativestructures}

In this appendix, we collect some useful definitions and structures introduced in the context of the derivation-based differential noncommutative geometry.  See \cite{DuVi88,Mass14,Mass15,Mass30} for more details.

\subsection{Derivation-based differential calculus}

Let us consider an associative algebra $\algA$ with unit denoted by $\bbbone$. Denote by $\caZ(\algA)$ the center of $\algA$. The differential calculus we consider in this noncommutative geometry is constructed using the space of derivations of $\algA$.

\begin{definition}
The vector space of derivations of $\algA$ is the space
$\der(\algA) = \{ \kX : \algA \rightarrow \algA \ / \ \kX \text{ linear}, \kX\cdotaction (ab) = (\kX\cdotaction a) b + a (\kX\cdotaction b), \forall a,b\in \algA\}$
\end{definition} 

\begin{proposition}
$\der(\algA)$ is a Lie algebra for the bracket defined by $[\kX, \kY ]\cdotaction a = \kX \cdotaction \kY \cdotaction a - \kY \cdotaction \kX \cdotaction a$ for any $\kX,\kY \in \der(\algA)$ and it is a $\caZ(\algA)$-module for the product defined by $(f\kX )\cdotaction a = f(\kX \cdotaction a)$ for any $a \in \algA$, $f \in \caZ(\algA)$ and $\kX \in \der(\algA)$).

The subspace $\Int(\algA) = \{ \ad_a : b \mapsto [a,b]\ / \ a \in \algA\} \subset \der(\algA)$, called the vector space of inner derivations, is a Lie ideal and a $\caZ(\algA)$-submodule.

With $\Out(\algA)=\der(\algA)/\Int(\algA)$, there is a short exact sequence of Lie algebras and $\caZ(\algA)$-modules
\begin{equation}
\label{eq-secderivations}
\xymatrix@1@C=15pt{{\algzero} \ar[r] & {\Int(\algA)} \ar[r] & {\der(\algA)} \ar[r] & {\Out(\algA)} \ar[r] & {\algzero}}
\end{equation}
\end{proposition}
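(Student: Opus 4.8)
The plan is to verify all assertions by direct computation from the Leibniz rule and the associativity of $\algA$; there is no genuine obstacle here, only bookkeeping. First I would check that $[\kX,\kY]$ is again a derivation: expanding $[\kX,\kY]\cdotaction(ab)$ with the Leibniz rule for $\kY$ and then for $\kX$ (and symmetrically), the mixed terms $(\kX\cdotaction a)(\kY\cdotaction b)$ and $(\kY\cdotaction a)(\kX\cdotaction b)$ each occur with both signs and cancel, leaving exactly the Leibniz rule for $[\kX,\kY]$. Antisymmetry is immediate, and the Jacobi identity is the standard one for the commutator of linear endomorphisms of the vector space $\algA$, a consequence of associativity of composition; hence $\der(\algA)$ is a Lie algebra. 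For the $\caZ(\algA)$-module structure the only point requiring attention is that $f\kX$ remains a derivation when $f\in\caZ(\algA)$: one has $(f\kX)\cdotaction(ab)=f\bigl((\kX\cdotaction a)b+a(\kX\cdotaction b)\bigr)=\bigl(f(\kX\cdotaction a)\bigr)b+a\bigl(f(\kX\cdotaction b)\bigr)=((f\kX)\cdotaction a)b+a((f\kX)\cdotaction b)$, where in the middle step $f$ is moved past $a$ using centrality --- this is precisely where $f\in\caZ(\algA)$ is indispensable. The remaining module axioms already hold on the space of all linear maps $\algA\to\algA$.

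Next I would treat $\Int(\algA)$. That each $\ad_a$ is a derivation is the identity $[a,bc]=[a,b]c+b[a,c]$, i.e.\ the Jacobi identity in $\algA$. To see that $\Int(\algA)$ is a Lie ideal I would establish the operator identity $[\kX,\ad_a]=\ad_{\kX\cdotaction a}$ for $\kX\in\der(\algA)$: applied to $b\in\algA$ it reads $\kX\cdotaction[a,b]-[a,\kX\cdotaction b]=[\kX\cdotaction a,b]$, which follows after expanding $\kX\cdotaction(ab)$ and $\kX\cdotaction(ba)$ by Leibniz and cancelling the two occurrences of $a(\kX\cdotaction b)$ and of $(\kX\cdotaction b)a$. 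Similarly $\Int(\algA)$ is a $\caZ(\algA)$-submodule, since $f\,\ad_a=\ad_{fa}$ for $f\in\caZ(\algA)$ (using $fb=bf$ to rewrite $f[a,b]$ as $[fa,b]$) and $fa\in\algA$.

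Finally, because $\Int(\algA)$ is at once a Lie ideal and a $\caZ(\algA)$-submodule of $\der(\algA)$, the quotient $\Out(\algA)=\der(\algA)/\Int(\algA)$ inherits both structures, the canonical projection is a morphism of Lie algebras and of $\caZ(\algA)$-modules, and the sequence \eqref{eq-secderivations} is exact by construction. The only genuinely ``hard'' part is arranging the cancellations cleanly; no structural difficulty arises.
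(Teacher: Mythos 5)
Your verification is correct: the Leibniz cancellation for $[\kX,\kY]$, the centrality of $f$ for $f\kX$, and the identities $[\kX,\ad_a]=\ad_{\kX\cdotaction a}$ and $f\,\ad_a=\ad_{fa}$ are exactly what is needed, and the exactness of \eqref{eq-secderivations} is then immediate from the quotient construction. The paper itself states this proposition without proof, as standard background from the derivation-based noncommutative geometry literature, and your argument is precisely the standard direct computation given there.
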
 

The two structures exhibited before, Lie algebra and $\caZ(\algA)$-module, are used in the following definition of the derivation-based differential calculus.

\begin{definition}
\label{def-thegradeddifferentialalgebraNCunderline}
Let $\underline{\Omega}^n_\der(\algA)$ be the set of $\caZ(\algA)$-multilinear antisymmetric maps from $\der(\algA)^n$ to $\algA$, with $\underline{\Omega}^0_\der(\algA) = \algA$, and let $\underline{\Omega}^\grast_\der(\algA) =\textstyle \bigoplus_{n \geq 0} \underline{\Omega}^n_\der(\algA)$. We introduce on $\underline{\Omega}^\grast_\der(\algA)$ a structure of $\gN$-graded differential algebra using the product
\begin{multline*}
(\omega\eta)(\kX_1, \dots, \kX_{p+q}) =\\
 \frac{1}{p!q!} \sum_{\sigma\in \kS_{p+q}} (-1)^{\sign(\sigma)} \omega(\kX_{\sigma(1)}, \dots, \kX_{\sigma(p)}) \eta(\kX_{\sigma(p+1)}, \dots, \kX_{\sigma(p+q)})
\end{multline*}
and using the differential $\dd$ (of degree $1$) defined by the Koszul formula
\begin{multline}
\label{eq-differentialNC}
\dd\omega(\kX_1, \dots , \kX_{n+1}) = \sum_{i=1}^{n+1} (-1)^{i+1} \kX_i \cdotaction \omega( \kX_1, \dots \omi{i} \dots, \kX_{n+1}) \\
 + \sum_{1\leq i < j \leq n+1} (-1)^{i+j} \omega( [\kX_i, \kX_j], \dots \omi{i} \dots \omi{j} \dots , \kX_{n+1}) 
\end{multline}
\end{definition} 


This definition is motivated by the following important example:
\begin{example}
Let $\varM$ be a smooth manifold and let $\algA = C^\infty(\varM)$. The center of this algebra is $\algA$ itself: $\caZ(\algA) = C^\infty(\varM)$. The Lie algebra of derivations is exactly the Lie algebra of smooth vector fields on $\varM$: $\der(\algA) = \Gamma(T\varM)$. In that case, there are no inner derivations, $\Int(\algA) = \algzero$, so that $\Out(\algA) = \Gamma(T\varM)$. The graded differential algebra coincides with the graded differential algebra of de~Rham forms on $\varM$: $\underline{\Omega}^\grast_\der(\algA) = \Omega^\grast(\varM)$.
\end{example}

\subsection{Noncommutative connections}
\label{noncommutativeconnections}

Connections on noncommutative algebras are defined using modules, which play the role of representations of the algebra. Let $\modM$ be a right $\algA$-module.

\begin{definition}
\label{def-noncommutativeconnectionsandcurvature}
A noncommutative connection on the right $\algA$-module $\modM$ for the differential calculus based on derivations is a linear map $\widehat{\nabla}_\kX : \modM \rightarrow \modM$, defined for any $\kX \in \der(\algA)$, such that $\forall \kX,\kY \in \der(\algA)$, $\forall a \in \algA$, $\forall m \in \modM$, $\forall f \in \caZ(\algA)$:
\begin{align*}
\widehat{\nabla}_\kX (m a) &= m(\kX \cdotaction a) + (\widehat{\nabla}_\kX m) a,
&
\widehat{\nabla}_{f\kX} m &= f \widehat{\nabla}_\kX m,
&
\widehat{\nabla}_{\kX + \kY} m &= \widehat{\nabla}_\kX m + \widehat{\nabla}_\kY m
\end{align*}
The curvature of $\widehat{\nabla}$ is the linear map $\hR(\kX, \kY) : \modM \rightarrow \modM$ defined for any $\kX, \kY \in \der(\algA)$ by $\hR(\kX, \kY) m = [ \widehat{\nabla}_\kX, \widehat{\nabla}_\kY ] m - \widehat{\nabla}_{[\kX, \kY]}m$.
\end{definition} 

Notice that $\hR(\kX, \kY) : \modM \rightarrow \modM$ is a right $\algA$-module morphism.

\begin{definition}
The gauge group of $\modM$ is the group $\Aut(\modM)$ of the automorphisms of $\modM$ as a right $\algA$-module.
\end{definition} 

\begin{proposition}
For any $\Phi \in \Aut(\modM)$ and any noncommutative connection $\widehat{\nabla}$, the map $\widehat{\nabla}^\Phi_\kX = \Phi^{-1}\circ \widehat{\nabla}_\kX \circ \Phi : \modM \rightarrow \modM$ is a noncommutative connection. This defines the action of the gauge group of $\modM$ on the space of noncommutative connections on $\modM$.
\end{proposition}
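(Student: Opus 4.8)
The plan is to verify directly that $\widehat{\nabla}^\Phi$ satisfies the three defining relations of Definition~\ref{def-noncommutativeconnectionsandcurvature}, and then that the assignment $\Phi \mapsto (\widehat{\nabla} \mapsto \widehat{\nabla}^\Phi)$ respects composition and the identity. The only structural inputs are that $\Phi$, and hence $\Phi^{-1}$, are automorphisms of $\modM$ as a right $\algA$-module, i.e. additive maps commuting with the right action of $\algA$ (in particular with multiplication by central elements of $\algA$).

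First I would check the Leibniz-type rule. For $m \in \modM$, $a \in \algA$ and $\kX \in \der(\algA)$, one writes $\widehat{\nabla}^\Phi_\kX(m a) = \Phi^{-1}\bigl(\widehat{\nabla}_\kX(\Phi(m)\, a)\bigr)$ using that $\Phi$ is right $\algA$-linear, then applies the Leibniz rule for $\widehat{\nabla}$ to get $\Phi^{-1}\bigl(\Phi(m)(\kX\cdotaction a) + (\widehat{\nabla}_\kX \Phi(m))\, a\bigr)$, and finally uses additivity and right $\algA$-linearity of $\Phi^{-1}$ to obtain $m(\kX\cdotaction a) + (\widehat{\nabla}^\Phi_\kX m)\, a$, as required. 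The two remaining axioms follow the same scheme: for $f \in \caZ(\algA)$ one pushes $\Phi$ to the inside, invokes $\widehat{\nabla}_{f\kX}\Phi(m) = f\,\widehat{\nabla}_\kX\Phi(m)$, and pulls $\Phi^{-1}$ back out, noting that a central $f$ acts through the right module structure which $\Phi^{-1}$ respects; and additivity in the derivation argument is immediate from additivity of $\Phi^{-1}$ together with $\widehat{\nabla}_{\kX+\kY}\Phi(m) = \widehat{\nabla}_\kX\Phi(m) + \widehat{\nabla}_\kY\Phi(m)$. This shows $\widehat{\nabla}^\Phi$ is a noncommutative connection on $\modM$.

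Second, I would check that this defines an action: $\widehat{\nabla}^{\Id} = \widehat{\nabla}$ is trivial, and for $\Phi, \Psi \in \Aut(\modM)$ one has $(\widehat{\nabla}^\Phi)^\Psi_\kX = \Psi^{-1}\circ\Phi^{-1}\circ\widehat{\nabla}_\kX\circ\Phi\circ\Psi = (\Phi\Psi)^{-1}\circ\widehat{\nabla}_\kX\circ(\Phi\Psi) = \widehat{\nabla}^{\Phi\Psi}_\kX$, so we get a (right) action of $\Aut(\modM)$ on the affine space of noncommutative connections. As a byproduct, since $[\widehat{\nabla}^\Phi_\kX,\widehat{\nabla}^\Phi_\kY] = \Phi^{-1}\circ[\widehat{\nabla}_\kX,\widehat{\nabla}_\kY]\circ\Phi$ and $\widehat{\nabla}^\Phi_{[\kX,\kY]} = \Phi^{-1}\circ\widehat{\nabla}_{[\kX,\kY]}\circ\Phi$, the curvature transforms by conjugation, $\hR^\Phi(\kX,\kY) = \Phi^{-1}\circ\hR(\kX,\kY)\circ\Phi$. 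I do not expect any genuine obstacle here; the one point requiring care is simply to invoke the right-$\algA$-module-morphism property of $\Phi$ and $\Phi^{-1}$ at every step, so that module multiplications and central coefficients pass through the conjugation unchanged.
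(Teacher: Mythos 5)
Your verification is correct and complete: the three axioms of Definition~\ref{def-noncommutativeconnectionsandcurvature} follow exactly as you argue from the right-$\algA$-linearity and additivity of $\Phi$ and $\Phi^{-1}$ (including the point that a central $f\in\caZ(\algA)$ acts through the right module structure), and the composition check $(\widehat{\nabla}^\Phi)^\Psi=\widehat{\nabla}^{\Phi\Psi}$ settles the action claim. The paper states this proposition in its review appendix without proof, treating it as a straightforward verification, and your argument is precisely that intended direct check (with the conjugation formula for the curvature as a correct bonus, consistent with Proposition~\ref{prop-noncommutativeconnectionsonmodM=algA}).
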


As a special case we consider the right $\algA$-module $\modM=\algA$. 

\begin{proposition}
\label{prop-noncommutativeconnectionsonmodM=algA}
A noncommutative connection $\widehat{\nabla}$ on $\modM=\algA$ is completely determined by $\omega \in \underline{\Omega}^1_\der(\algA)$ defined by $\omega(\kX) = \widehat{\nabla}_\kX \bbbone$. For any $a \in \modM=\algA$, one has $\widehat{\nabla}_\kX a = \kX \cdotaction a + \omega(\kX) a$.
The curvature of $\widehat{\nabla}$ is the multiplication on the left on $\modM=\algA$ by the noncommutative $2$-form $\Omega(\kX, \kY) = \dd\omega (\kX, \kY) + [ \omega(\kX), \omega(\kY) ]$.

The gauge group is identified with the invertible elements $g \in \algA$ by $\Phi_g(a) = ga$ and the gauge transformations on $\widehat{\nabla}$ take the explicit form $\omega \mapsto \omega^g = g^{-1} \omega g + g^{-1} \dd g$ and $\Omega \mapsto \Omega^g = g^{-1} \Omega g$.

$\widehat{\nabla}^0_\kX$, defined by $a \mapsto \kX \cdotaction a$, is a noncommutative connection on $\algA$.
\end{proposition}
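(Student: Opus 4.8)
The plan is to prove the four assertions in turn, each by a short direct computation exploiting the fact that $\modM = \algA$ is generated as a right $\algA$-module by the unit $\bbbone$. \emph{The $1$-form.} Given a noncommutative connection $\widehat{\nabla}$, I would set $\omega(\kX) = \widehat{\nabla}_\kX \bbbone$ and, for arbitrary $a \in \algA$, write $a = \bbbone \cdotaction a$ and apply the module Leibniz rule of Definition~\ref{def-noncommutativeconnectionsandcurvature} with $m = \bbbone$; this yields $\widehat{\nabla}_\kX a = \kX \cdotaction a + \omega(\kX) a$, so $\widehat{\nabla}$ is determined by $\omega$. The $\caZ(\algA)$-linearity and additivity of $\widehat{\nabla}$ in the derivation argument become exactly the statement $\omega \in \underline{\Omega}^1_\der(\algA)$. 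Conversely, given $\omega \in \underline{\Omega}^1_\der(\algA)$, I would check that $\widehat{\nabla}_\kX a := \kX \cdotaction a + \omega(\kX) a$ satisfies the three axioms: the Leibniz rule follows from the Leibniz rule for the derivation $\kX$ together with associativity of the product in $\algA$, and the other two axioms from $\caZ(\algA)$-linearity and additivity of $\kX \mapsto \kX$ and of $\omega$.

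\emph{The curvature.} I would substitute $\widehat{\nabla}_\kX a = \kX \cdotaction a + \omega(\kX) a$ into $\hR(\kX,\kY) a = \widehat{\nabla}_\kX \widehat{\nabla}_\kY a - \widehat{\nabla}_\kY \widehat{\nabla}_\kX a - \widehat{\nabla}_{[\kX,\kY]} a$ and expand, using the Leibniz rule of $\kX$ once more on $\widehat{\nabla}_\kX(\omega(\kY) a) = (\kX \cdotaction \omega(\kY)) a + \omega(\kY)(\kX \cdotaction a) + \omega(\kX)\omega(\kY) a$. The second-order derivation terms collapse to $[\kX,\kY]\cdotaction a$ and cancel against the corresponding term in $\widehat{\nabla}_{[\kX,\kY]} a$; the mixed terms $\omega(\kX)(\kY \cdotaction a)$ and $\omega(\kY)(\kX \cdotaction a)$ cancel in the antisymmetrization; what survives is $\bigl(\kX \cdotaction \omega(\kY) - \kY \cdotaction \omega(\kX) - \omega([\kX,\kY])\bigr) a + [\omega(\kX), \omega(\kY)] a$, which by the Koszul formula~\eqref{eq-differentialNC} applied to the $1$-form $\omega$ is precisely $\Omega(\kX,\kY) a$ with $\Omega(\kX,\kY) = \dd\omega(\kX,\kY) + [\omega(\kX),\omega(\kY)]$. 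Hence $\hR(\kX,\kY)$ is left multiplication by $\Omega(\kX,\kY)$.

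\emph{The gauge group and $\widehat{\nabla}^0$.} A right $\algA$-module endomorphism $\Phi$ of $\algA$ is determined by $g := \Phi(\bbbone)$ through $\Phi(a) = g a$, and is an automorphism iff $g$ is invertible in $\algA$, which identifies $\Aut(\modM)$ with the group of invertible elements of $\algA$ via $\Phi_g$. For $\widehat{\nabla}^\Phi_\kX = \Phi^{-1} \circ \widehat{\nabla}_\kX \circ \Phi$ I would compute the associated $1$-form as $\omega^g(\kX) = \Phi^{-1}(\widehat{\nabla}_\kX g) = g^{-1}(\kX \cdotaction g) + g^{-1}\omega(\kX) g$, that is $\omega^g = g^{-1}\omega g + g^{-1}\dd g$ since $(\dd g)(\kX) = \kX \cdotaction g$; and from $\hR^\Phi(\kX,\kY) = \Phi^{-1} \circ \hR(\kX,\kY) \circ \Phi$ the left multiplication by $\Omega(\kX,\kY)$ conjugates to left multiplication by $g^{-1}\Omega(\kX,\kY) g$, i.e.\ $\Omega^g = g^{-1}\Omega g$. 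Finally, $\widehat{\nabla}^0_\kX a = \kX \cdotaction a$ is the connection corresponding to $\omega = 0$, its axioms being just the Leibniz rule and $\caZ(\algA)$-linearity of the derivation $\kX$.

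Every step is a routine verification, so I do not anticipate a genuine obstacle; the only point demanding care is the noncommutative bookkeeping in the curvature computation — keeping the order of $\omega(\kX)$, $\omega(\kY)$ and the right factor $a$ correct so that the mixed terms cancel and the commutator $[\omega(\kX),\omega(\kY)]$ emerges as claimed.
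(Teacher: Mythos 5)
Your proposal is correct; note that the paper itself states this proposition without proof (it is review material in Appendix~A, with the argument left to the cited references on derivation-based noncommutative geometry), and your direct verification is exactly the standard argument. All four computations check out: determining $\widehat{\nabla}$ from $\omega(\kX)=\widehat{\nabla}_\kX\bbbone$ via $a=\bbbone a$, recovering $\Omega(\kX,\kY)=\dd\omega(\kX,\kY)+[\omega(\kX),\omega(\kY)]$ from the Koszul formula with the noncommutative ordering handled correctly, identifying $\Aut(\modM)$ with the invertible elements of $\algA$ through $\Phi(a)=\Phi(\bbbone)a$ and deducing $\omega^g=g^{-1}\omega g+g^{-1}\dd g$, $\Omega^g=g^{-1}\Omega g$, and the trivial connection $\widehat{\nabla}^0$ as the case $\omega=0$.
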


\subsection{The matrix algebra}
\label{thematrixalgebra}

Let us describe the derivation-based noncommutative geometries of the finite dimensional algebra of complex $n\times n$ matrices $\algA = M_n(\gC) = M_n$. We refer to \cite{DuVi88,DuViKernMado90a,Mass11} for further properties.
\begin{proposition}
\label{prop-differentialcalculusmatrixalgebra}
One has:
\begin{itemize}
\item $\caZ(M_n) = \gC$.

\item $\der(M_n) = \Int(M_n) \simeq \ksl_n =\ksl(n,\gC)$ (traceless matrices). The explicit isomorphism associates to any $\gamma \in \ksl_n(\gC)$ the derivation $\ad_\gamma : a \mapsto [\gamma, a]$. 

\item $\underline{\Omega}^\grast_\der(M_n) \simeq M_n \otimes \exter^\grast \ksl_n^\ast$, with the differential $\dd'$ coming from the differential of the differential complex of the Lie algebra $\ksl_n$ represented on $M_n$ by the adjoint representation (commutator).

\item There exits a canonical noncommutative $1$-form $i\theta \in \underline{\Omega}^1_\der(M_n)$ such that  for any $\gamma \in M_n(\gC)$, $i\theta(\ad_{\gamma}) = \textstyle\gamma - \frac{1}{n} \tr (\gamma)\bbbone$.
This noncommutative $1$-form $i\theta$ makes the explicit isomorphism $\Int(M_n(\gC)) \xrightarrow{\simeq} \ksl_n$.

\item $i\theta$ satisfies the relation $\dd' (i\theta) - (i\theta)^2 = 0$. This makes $i\theta$ looks very much like the Maurer-Cartan form in the geometry of Lie groups (here $SL_n(\gC)$).

\item For any $a \in M_n$, one has $\dd' a = [i\theta, a] \in \underline{\Omega}^1_\der(M_n)$. This relation is no longer true in higher degrees.
\end{itemize}

\end{proposition}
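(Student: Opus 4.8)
The plan is to prove the six items in the order listed, since each relies on the previous. The center $\caZ(M_n) = \gC\bbbone$ is immediate: a matrix commuting with all matrix units $E_{ij}$ is forced to be diagonal with equal diagonal entries, hence scalar. The one genuinely structural input is the second item, that every derivation of $M_n$ is inner; I would invoke the classical fact that $M_n$ is a separable (finite-dimensional semisimple) algebra, so $H^1(M_n,M_n)=0$, which says exactly that $\der(M_n)=\Int(M_n)$ — alternatively one checks this by hand, writing down an explicit $\gamma$ in terms of the action of the derivation on matrix units. Granting this, the map $\ksl_n \to \Int(M_n)$, $\gamma \mapsto \ad_\gamma$, is a Lie algebra isomorphism: it is onto because $\ad_a = \ad_{a-\frac{1}{n}\tr(a)\bbbone}$ rewrites any inner derivation with a traceless argument; it is one-to-one because $\ad_\gamma = 0$ forces $\gamma \in \caZ(M_n) = \gC\bbbone$ and the only traceless scalar is $0$; and $[\ad_\gamma,\ad_\delta] = \ad_{[\gamma,\delta]}$ with $[\gamma,\delta]\in\ksl_n$.

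For the third item I would unwind Definition~\ref{def-thegradeddifferentialalgebraNCunderline}. Since $\caZ(M_n)=\gC$, the space $\underline{\Omega}^n_\der(M_n)$ is that of $\gC$-multilinear antisymmetric maps $\der(M_n)^n \to M_n$, which under $\der(M_n)\simeq\ksl_n$ is $\Hom_\gC(\exter^n\ksl_n, M_n) \cong M_n \otimes \exter^n\ksl_n^\ast$; the product of the definition becomes matrix multiplication tensored with the exterior product, and the Koszul differential \eqref{eq-differentialNC} becomes the Chevalley--Eilenberg differential of $\ksl_n$ with coefficients in $M_n$ for the commutator action. This is a line-by-line comparison of formulas, using $[\ad_\gamma,\ad_\delta]=\ad_{[\gamma,\delta]}$ and $\ad_\gamma\cdotaction a = [\gamma,a]$.

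Finally I would handle the canonical $1$-form. Set $i\theta(\ad_\gamma) = \gamma - \tfrac{1}{n}\tr(\gamma)\bbbone$; this is well defined because adding a scalar to $\gamma$ alters neither $\ad_\gamma$ nor the traceless part, and it is $\gC$-linear in the derivation, hence an element of $\underline{\Omega}^1_\der(M_n)$. On traceless $\gamma$ it is the identity, so it is precisely the inverse of $\gamma\mapsto\ad_\gamma$ and realises $\Int(M_n)\xrightarrow{\simeq}\ksl_n$. For the Maurer--Cartan-type identity I would evaluate $\dd'(i\theta)$ and $(i\theta)^2$ on a pair $(\ad_\gamma,\ad_\delta)$: centrality of $\bbbone$ kills the $\tfrac1n\tr$ corrections in every commutator and $\tr[\gamma,\delta]=0$, so both expressions equal $[\gamma,\delta]$, whence $\dd'(i\theta)-(i\theta)^2 = 0$. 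Likewise, for $a \in M_n = \underline{\Omega}^0_\der(M_n)$ one has $(\dd'a)(\ad_\gamma) = \ad_\gamma\cdotaction a = [\gamma,a] = [i\theta(\ad_\gamma),a] = [i\theta,a](\ad_\gamma)$, so $\dd'a = [i\theta,a]$; and this fails already in degree $1$, since $\dd'(i\theta) = (i\theta)^2$ while the graded commutator $[i\theta,i\theta] = 2(i\theta)^2$, and $(i\theta)^2 \neq 0$ because it takes the value $[\gamma,\delta]$, nonzero for suitable traceless $\gamma,\delta$.

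The only real obstacle is the innerness of derivations of $M_n$; everything else amounts to transporting Definition~\ref{def-thegradeddifferentialalgebraNCunderline} across the isomorphism $\der(M_n)\simeq\ksl_n$ and carrying out elementary matrix manipulations in which the centrality of $\bbbone$ and the tracelessness of commutators do all the work.
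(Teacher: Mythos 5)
Your proposal is correct: the only genuinely structural input is the innerness of every derivation of $M_n$ (Skolem--Noether, equivalently $H^1(M_n,M_n)=0$ for a separable algebra), and your remaining verifications --- the isomorphism $\der(M_n)\simeq\ksl_n$ via $\gamma\mapsto\ad_\gamma$, the identification $\underline{\Omega}^\grast_\der(M_n)\simeq M_n\otimes\exter^\grast\ksl_n^\ast$ with the Koszul differential becoming the Chevalley--Eilenberg one, the well-definedness of $i\theta$, the computation showing $\dd'(i\theta)$ and $(i\theta)^2$ both evaluate to $[\gamma,\delta]$ on $(\ad_\gamma,\ad_\delta)$, the degree-zero identity $\dd'a=[i\theta,a]$, and the degree-one counterexample $[i\theta,i\theta]=2(i\theta)^2\neq(i\theta)^2=\dd'(i\theta)$ --- are all accurate. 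The paper itself gives no proof of this proposition (it is stated in the appendix as a review, with references to the derivation-based differential calculus literature), and your argument is precisely the standard one given there, so it matches the intended justification.
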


\section*{References}

\bibliography{biblio-articles-perso,biblio-livre,biblio-articles}

\end{document}